\newtheorem{theorem}{Theorem}[section]
\newtheorem{definition}[theorem]{Definition}
\newtheorem{corollary}[theorem]{Corollary}
\newtheorem{lemma}[theorem]{Lemma}
\newtheorem{assumption}[theorem]{Assumption}
\newtheorem{remark}[theorem]{Remark}
\title{Global Convergence of Deep Galerkin and PINNs Methods for Solving Partial Differential Equations}
\author{%
  Deqing Jiang\\
  Mathematical Institute\\
  University of Oxford\\
  Oxford, UK, OX2 6GG \\
  \texttt{jiangd@maths.ox.ac.uk} \\
   \And
Justin Sirignano \\
Mathematical Institute\\
  University of Oxford\\
  Oxford, UK, OX2 6GG \\
\texttt{sirignano@maths.ox.ac.uk} \\
\And
Samuel N. Cohen \\
Mathematical Institute\\
  University of Oxford\\
  Oxford, UK, OX2 6GG \\
\texttt{cohens@maths.ox.ac.uk} \\
}
\begin{document}

\maketitle

\begin{abstract}
  Numerically solving high-dimensional partial differential equations (PDEs) is a major challenge. Conventional methods, such as finite difference methods, are unable to solve high-dimensional PDEs due to the curse-of-dimensionality. A variety of deep learning methods have been recently developed to try and solve high-dimensional PDEs by approximating the solution using a neural network. In this paper, we prove global convergence for one of the commonly-used deep learning algorithms for solving PDEs, the Deep Galerkin Method (DGM). DGM trains a neural network approximator to solve the PDE using stochastic gradient descent. We prove that, as the number of hidden units in the single-layer network goes to infinity (i.e., in the ``wide network limit"), the trained neural network converges to the solution of an infinite-dimensional linear ordinary differential equation (ODE). The PDE residual of the limiting approximator converges to zero as the training time $\rightarrow \infty$. Under mild assumptions, this convergence also implies that the neural network approximator converges to the solution of the PDE. A closely related class of deep learning methods for PDEs is Physics Informed Neural Networks (PINNs). Using the same mathematical techniques, we can prove a similar global convergence result for the PINN neural network approximators. Both proofs require analyzing a kernel function in the limit ODE governing the evolution of the limit neural network approximator. A key technical challenge is that the kernel function, which is a composition of the PDE operator and the neural tangent kernel (NTK) operator, lacks a spectral gap, therefore requiring a careful analysis of its properties. 
\end{abstract}

\section{Introduction}

Deep learning methods have become widely-used for solving high-dimensional PDEs and modeling physics data governed by PDEs. Although low-dimensional PDEs can be efficiently solved with existing numerical techniques, such as finite difference methods, high-dimensional PDEs are
computationally intractable due to the curse-of-dimensionality. An alternative approach that has been widely employed is to approximate the PDE solution with a neural network and then train the neural network with stochastic gradient descent to satisfy the PDE and its boundary conditions
-- e.g., the deep Galerkin method (DGM) in \citet{sirignano2018dgm}. A similar method -- physics-informed neural networks (PINNs) in \cite{raissi2019physics} -- was developed to model physics data by training a neural network to both satisfy the corresponding governing PDE and match a sparse set of experimental observations. Both methods share a common feature of training a neural network with (stochastic) gradient descent to satisfy an objective function with PDE. Numerous other articles in the literature have also explored solving PDEs with neural networks (see \cite{beck2020overview} for an overview). Solving PDEs with neural network approximators is a natural idea that has been considered in different forms for decades, for instance, \cite{lee1990neural}, \cite{lagaris1998artificial}, \cite{malek2006numerical} and \cite{rudd2013solving}. These papers propose to use neural networks to solve differential equations by estimating neural-network solutions on an a priori fixed mesh. However, in this paper, the algorithm we mainly discuss (DGM) is mesh-free and hence it can be applied to solve high-dimensional PDE problems.

Although it is clear -- due to the universal approximation properties of neural networks \citep{hornik1991approximation} -- that \emph{there exists} a neural network which can approximate the solution to a given PDE (interpreting the PDE solution as a function in the Sobolev space $\mathcal{H}^2$), the convergence of the neural network \emph{when trained with gradient descent} to the PDE solution has not been previously proven. 

Convergence analysis for the optimization of neural network approximators to PDEs must address several mathematical challenges. First, the neural network is non-convex in its parameters, which is further exacerbated by applying a PDE operator to the neural network in the objective function. Consequently, as the number of hidden units $\rightarrow \infty$, the standard neural tangent kernel (NTK) does not arise \citep{jacot2018neural}. Instead, the kernel function involves the PDE operator, requiring the development of new mathematical analysis. Finally, as is also true for the standard NTK setting, the kernel lacks a spectral gap, which makes analysis of infinite-dimensional systems (such as approximators to PDEs) challenging. Our proof leverages a careful analysis of the eigendecomposition of the limit ODE and its kernel function. 

For both the DGM and PINN algorithms, we prove that, as the number of hidden units in the single-layer network goes to infinity (i.e., in the ``wide network limit"), the trained neural network converges to the solution of an infinite-dimensional linear ODE. The PDE residual of the limiting approximator converges to zero as the training time $\rightarrow \infty$. Under mild assumptions, this convergence also implies that the neural network approximator converges to the solution of the PDE. \cite{wang2022and} prove that, under some assumptions, as the number of neurons goes to infinity, the training process of PINNs  will converge to a process characterized by a kernel matrix. However, \cite{wang2022and} does not prove global convergence of the neural network approximator.

Our paper provides a rigorous mathematical analysis of the DGM and PINN training process for solving PDEs with neural networks. In summary, the key contributions of this paper are:
\begin{enumerate}
    \item We prove that as the number of hidden units in the neural network $\rightarrow \infty$ (i.e., in the ``wide network limit), the training process of the neural approximator trained to minimize the PDE residual converges to an infinite-dimensional linear ODE characterized by a kernel function. 
    \item The kernel function is different than the standard NTK kernel and involves the PDE operator.
    \item We prove that even though the kernel is only positive semi-definite and there is no spectral gap, the objective function (i.e., the PDE residual of the wide-limit neural network) converges to zero as the training time $t \rightarrow \infty$. This result establishes global convergence. Furthermore, under an additional mild assumption, the wide-limit neural network converges to the PDE solution. 
\end{enumerate}

The paper is organized as follows. In Section \ref{sec2}, we introduce the class of PDEs that will be considered. Section \ref{sec3} describes the neural network training algorithm for solving PDEs and then proves that the neural network approximator converges to the limit ODE as the number of hidden units $\rightarrow \infty$. Section \ref{sec5} studies the properties of the kernel function that characterizes the limit ODE. Then, we prove that the PDE residual converges to zero as the training time $\rightarrow \infty$. Then, it is proven that -- with an additional mild assumption on the PDE -- the wide-limit neural network also converges to the PDE solution. In Section \ref{sec7}, we prove global convergence for the PINN algorithm. Lemmas, corollaries, and theorems are presented in the main part of the paper. All mathematical proofs are in the Appendix \ref{App}.

\section{Mathematical Framework} \label{sec2}
We will study the convergence of neural network algorithms -- such as DGM and PINNs -- for solving PDEs. In particular, we will analyze the convergence of such algorithms for the following class of second-order linear PDEs with Dirichlet boundary conditions: \begin{align}
    \begin{cases} \label{pde}
        \mathcal{A}v&= h,\quad \text{in}\,\, \Omega\\
        \,\,\,\,v&= f,\quad \text{on} \,\, \partial \Omega,
    \end{cases}
\end{align}
where $\Omega\in \mathbb{R}^d$ is a compact set with a smooth boundary.  We will study strong Sobolev solutions to the PDE \eqref{pde1}; that is, we are interested in solutions $u\in \mathcal{H}^{2}$, where for a finite measure $\mu$ on $\Omega$,
\begin{equation}\label{eq:sobolev}
    \mathcal{H}^p = \bigg\{f\in L^2(\Omega, \mu): \|f\|_{\mathcal{H}^p}:=\Big( \sum_{|\alpha|<p} \|D_\alpha f\|_{L^2} \Big)<\infty\bigg\},
\end{equation}
where $Du$ is the weak derivative of $u$ (see \cite{evans2022partial}).  We assume $\mu$ is equivalent to Lebesgue measure, and the logarithm of its Radon--Nikodym derivative is bounded (which ensures it generates the same $\mathcal{H}^2$ space as Lebesgue measure). For notational convenience, we will write $\mathcal{H}^2_{(0)} = \mathcal{H}^2\cap \mathcal{H}^1_0$, representing the $\mathcal{H}^2$ functions with zero value on the boundary, equipped with the $\mathcal{H}^2$ norm.

We make the following (standard) assumptions on our problem:
\begin{assumption}[Smoothness of the boundary $\Omega$] \label{a1}
The boundary $\partial \Omega$ is $C^{3, \alpha}$ for some $\alpha\in(0,1)$; i.e., three times continuously differentiable with $\alpha$-H\"older continuous derivatives of order 3.
\end{assumption}
\begin{assumption}[Auxiliary function $\eta$]\label{auxiliaryfunction} \label{a2}
There exists a (known) function $\eta \in C^3_b(\mathbb{R}^n)$, which satisfies $\eta>0 $ in $\Omega$, and $\eta=0$ on $\partial \Omega$. Furthermore, its first order derivative does not vanish at the boundary (that is, for  $x\in \partial \Omega$ and $\mathbf{n}_{x}$ an outward unit normal vector at $x$, we have  $ \nabla \eta(x)\cdot  \mathbf{n}_{x} \neq 0$).
\end{assumption}
\begin{assumption}[Interpolation of the boundary condition function]
    There exists a (known) function $\bar f \in \mathcal{H}^{2}$ such that $\bar f|_{\partial\Omega}=f$. In the rest of this paper, we identify $f$ with its extension $\bar f$ defined on $\overline{\Omega}$ for notational simplicity.
\end{assumption}
%Given that $f$ is smooth enough to interpolate on $\Bar{\Omega}$, 
We can reformulate the PDE as \begin{align} \label{pde1}
    \begin{cases}
        \mathcal{A}u&= {g},\quad \text{in}\,\, \Omega\\
        \,\,\,\,u&= 0,\quad \text{on} \,\, \partial \Omega,
    \end{cases}
\end{align}
where $u:=v-f$ and ${g}:=h-\mathcal{A}f$. Finally, we assume that the PDE operator satisfies a certain type of Lipschitz condition:
\begin{assumption}[Lipschitz condition]
    \label{assume_1}
There exists a constant $k>0$ such that for any $f_1, f_2 \in \mathcal{H}^2$ and any $x \in \Omega$, the linear operator $\mathcal{A}$ satisfies
\begin{align} \label{Lip}
    \begin{split}
    |\mathcal{A}f_1(x)-\mathcal{A}f_2(x)| &\leq k \bigg [ \sum_{0 \leq |\alpha|\leq 2} |D_\alpha f_1(x)-D_\alpha f_2(x)| \bigg ].
\end{split}
\end{align}
\end{assumption}
\section{Deep Learning Algorithms for Solving Partial Differential Equations} \label{sec3}

Deep learning PDE algorithms -- such as DGM and PINNs -- train a neural network approximator to satisfy the PDE and its boundary conditions using either gradient descent or stochastic gradient descent. 

Consider the following single-layer neural network with $N$ hidden units $S^N$: \begin{align}
    S^N(x; \theta^N) =\frac{1}{N^\beta}\sum_{i=1}^N c^i \sigma(w^ix+b^i),
\end{align}
where $\frac{1}{N^\beta}$ is a normalization factor and $\frac{1}{2}<\beta<1$.  We train a neural network $Q^N$ to approximate the solution $u$ to the PDE where
\begin{align}
    Q^N(x; \theta^N):=\eta(x)S^N(x; \theta^N)=\eta(x)\cdot \frac{1}{N^{\beta}}\sum_{i=1}^N c^i \sigma(w^i x+b^i).
\end{align}
 $\eta(x)$ is a fixed function which vanishes on the boundary $x \in \partial \Omega$; therefore $Q^N$ automatically satisfies the boundary conditions of the PDE (\ref{pde1}). This method, which was first introduced by \cite{mcfall2009artificial}, simplifies the training of the neural network model. The parameters $\theta^N = (c^i, w^i, b^i)_{i=1}^N$ must be trained using gradient descent to satisfy the PDE in the interior of the domain. Specifically, we will minimize the PDE residual error for the neural network by minimizing the following objective function:

\begin{align} \label{J}
    J(\theta^N)=\|\mathcal{A}Q^N-g\|_{L^2(\mu)}^2:=\int_\Omega [\mathcal{A}Q^N(x; \theta^N)-g(x)]^2 d\mu(x),
\end{align}
where $\mu$ is a sampling measure (satisfying the regularity assumptions stated after \eqref{eq:sobolev} above). If the residual term $\mathcal{A}Q^N(x; \theta^N)-g(x)$ equals zero for all $x \in \Omega$, then $Q^N=u$ is the solution of the PDE. We will minimize the objective function (\ref{J}) using gradient descent with clipping. The gradient of (\ref{J}) is:
\begin{align}
    \nabla_\theta J(\theta^N) = \int_\Omega [\mathcal{A}Q^N(x; \theta^N)-g(x)] \nabla_\theta \mathcal{A}Q^N(x; \theta^N)d\mu(x).
\end{align}
Gradient clipping is widely used in deep learning, see for instance \cite{zhang2019gradient}, \cite{pascanu2013difficulty} and chapters 10 and 11 of \cite{goodfellow2016deep}. The continuous-time gradient descent training with clipping is given by:
\begin{align} \label{9}
    \frac{d\theta^N_t}{dt}=-\alpha^N G^N(\theta_t^N),
\end{align}
where the learning rate is $\alpha^N=N^{2\beta-1}$ and \begin{align}
    G^N(\theta_t^N)=-\int_\Omega \psi^N(\mathcal{A}Q^N(x; \theta^N_t)-g(x))\Phi^N(\nabla_\theta \mathcal{A}Q^N(x; \theta^N_t))d\mu(x).
\end{align}
Here $\Phi^N$ is a vector function that applies elementwise clipping with the function $\phi^N$. For each entry of vector $\nabla_\theta \mathcal{A}Q^N$, we clip its value with the scalar function $\phi^N$.

In practice, (\ref{9}) can be approximated by discretizing in time and, at each time step, generating Monte Carlo samples from the measure $\mu$ to approximate the integral, which is highly computationally efficient even for high-dimensional PDEs. This is also equivalent to the stochastic gradient descent version of (\ref{9}). Although not investigated in this paper, standard methods can be used to prove that stochastic gradient -- using the correct learning rate -- will converge to the continuous-time gradient flow (\ref{9}); for example, weak convergence analysis such as in \cite{SirignanoSpiliopoulos1} and \cite{SirignanoSpiliopoulos2} could be used. 

Let $Q_t^N(x) = Q^N(x; \theta_t^N)$ be the neural network at training time $t$ with $N$ hidden units. We will analyze the trained neural network $Q_t^N$  as the number of hidden units $N \rightarrow \infty$ and the training time $t \rightarrow \infty$. First, we prove that the trained neural network $Q_t^N$ will converge to the solution of an infinite-dimensional ODE as $N \rightarrow \infty$. That is, in the ``wide limit" where the number of hidden units $\rightarrow \infty$, $Q_t^N$ converges to the solution of an ODE. Then, we prove that the wide-limit neural network converges to the global minimizer of the objective function (with zero PDE residual) as the training time $t \rightarrow \infty$. Under additional mild assumptions, this global minimizer is also a solution to the PDE. These convergence results can also be proven for the PINNs algorithm for solving PDEs; see Section \ref{sec7}. 

Our convergence results will be proven under the following assumptions on the neural network architecture:

\begin{assumption}[Activation function]\label{assume_activation}
The activation function $\sigma\in C_b^4(\mathbb{R})$ is non-constant.
\end{assumption}
\begin{assumption}[Neural network initialization]\label{initialization}
The initialization of the parameters $\theta_0^N$, for all $i \in \{1,2,...,N\}$, satisfies: \begin{itemize}
    \item The parameters $c_0^i$, $w_0^i$, $b_0^i$ are i.i.d. random variables.
    \item The random variables $c_0^i$ are bounded, $|c^i_0|<K_0$, and $\mathbb{E}[c^i_0]=0$.
    \item \label{cybenko}
The distribution of the random variables $w_0^i, b_0^i$ has full support. That is, for any open set $D \subseteq \mathbb{R}^{n+1}$, we have $\mathbb{P}((w_0^i,b_0^i)\in D)>0$.
\item The moments  $\mathbb{E}[|(w_0^i)_k|^3]$ and $\mathbb{E}[|b_0^i|]$ are bounded where $(w_0^i)_k$ is the $k$-element of $w_0^i$.
\end{itemize}
\end{assumption}
\begin{definition} [Smooth clipping function]
\label{psi}
A function class $\{h^N\}_{N\in \mathbb{N}^+}$ forms a family of smooth clipping functions with parameter $\gamma > 0$ if for any $N\in \mathbb{N}^+$  \begin{itemize}
    \item $h^N \in \mathbb{C}^2_b(\mathbb{R})$ is increasing on $\mathbb{R}$.
    \item $|h^N|$ is bounded by $2N^\gamma$.
    \item $h^N(x)=x$ for $x \in [-N^\gamma,N^\gamma]$. \item $|(h^N)'| \leq 1$ on $\mathbb{R}$.
\end{itemize}
\end{definition}
\begin{assumption}
\label{phi}
Functions $\{\psi^N\}_{N\in \mathbb{N}^+}$ and $\{\phi^N\}_{N\in \mathbb{N}^+}$ are families of smooth clipping functions with parameter $\delta$ and $\epsilon-\beta$ where $\epsilon>\delta>0$, $\beta \in (\frac{1}{2},1)$ and $\epsilon+\delta < \frac{1-\beta}{2}.$
\end{assumption}

\begin{lemma} \label{l35}
    There exists a constant $k$ independent of $N$ such that the change of each component of $c^i,w^i,b^i$ from its initial condition (e.g. $|c^i_t-c^i_0|$) is bounded by $kt N^{2\beta-1+\delta+\epsilon-\beta}=ktN^{\beta+\delta+\epsilon-1}$.
\end{lemma}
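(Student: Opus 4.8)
The plan is to exploit the clipping directly: every factor appearing in the training vector field $G^N$ is truncated by a clipping function, so the right-hand side of \eqref{9} is \emph{uniformly} bounded in $\theta$, with a size that is an explicit power of $N$; integrating this bound in time gives the stated estimate, and no control on the (a priori unbounded) quantity $\nabla_\theta \mathcal{A}Q^N$ is needed.

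Concretely, I would first record the magnitudes coming from the clipping functions. By Definition \ref{psi} and Assumption \ref{phi}, $\psi^N$ is a smooth clipping function with parameter $\delta$, hence $|\psi^N(y)|\le 2N^\delta$ for all $y$; likewise $\phi^N$ has parameter $\epsilon-\beta$, and since $\Phi^N$ applies $\phi^N$ coordinatewise, every component of $\Phi^N(v)$ has absolute value at most $2N^{\epsilon-\beta}$ for any vector $v$. Therefore, for each scalar component $j$ of $\theta$ (one of the $c^i$, an entry of some $w^i$, or some $b^i$) and every $\theta$,
\[
|G^N(\theta)_j| \le \int_\Omega |\psi^N(\mathcal{A}Q^N(x;\theta)-g(x))|\,|\Phi^N(\nabla_\theta\mathcal{A}Q^N(x;\theta))_j|\,d\mu(x) \le 4\,\mu(\Omega)\,N^{\delta+\epsilon-\beta},
\]
using that $\mu$ is a finite measure. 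Since the learning rate is $\alpha^N=N^{2\beta-1}$, equation \eqref{9} then yields, uniformly in $t$,
\[
\Big|\tfrac{d\theta^N_{t,j}}{dt}\Big| = \alpha^N\,|G^N(\theta^N_t)_j| \le 4\mu(\Omega)\,N^{2\beta-1+\delta+\epsilon-\beta} = 4\mu(\Omega)\,N^{\beta+\delta+\epsilon-1}.
\]
Integrating from $0$ to $t$ gives $|\theta^N_{t,j}-\theta^N_{0,j}|\le 4\mu(\Omega)\,t\,N^{\beta+\delta+\epsilon-1}$, so the lemma holds with $k=4\mu(\Omega)$ (or any larger constant), which is manifestly independent of $N$.

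Two bookkeeping points must be settled along the way, and they are the only mild subtleties; there is no genuine analytic obstacle. First, the argument presupposes that \eqref{9} admits a solution on all of $[0,\infty)$: since $\sigma\in C_b^4$ and $\eta\in C_b^3$ make $\mathcal{A}Q^N(x;\cdot)$ and $\nabla_\theta\mathcal{A}Q^N(x;\cdot)$ smooth in $\theta$, and $\psi^N,\phi^N\in C_b^2$, the field $G^N$ is locally Lipschitz in $\theta$, so Picard--Lindel\"of gives local existence; the uniform derivative bound just obtained then rules out finite-time blow-up, and the solution extends globally. Second, one should note that the integrand defining $G^N(\theta)_j$ is measurable and the integral is finite, which is immediate because that integrand is bounded by $2N^\delta\cdot 2N^{\epsilon-\beta}$ and $\mu(\Omega)<\infty$. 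The entire content of the lemma lies in the design of the clipped dynamics and in the matched scalings of $\alpha^N$, $\psi^N$, and $\phi^N$; the ``hard part'' is really just keeping track of the three exponents and the global-existence remark.
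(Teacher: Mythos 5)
Your proof is correct and takes the same route as the paper: bound $\psi^N$ by $O(N^\delta)$, bound each component of $\Phi^N$ by $O(N^{\epsilon-\beta})$, multiply by the learning rate $\alpha^N = N^{2\beta-1}$, and integrate in time. The paper's proof is a one-line version of exactly this; your additional remarks on global existence of the ODE and on the measurability/finiteness of the defining integral are reasonable bookkeeping that the paper leaves implicit.
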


\subsection{Convergence of the trained neural network as the number of hidden units $N \rightarrow \infty$}
In this section, we analyze the evolution of the neural network $Q^N_t$ as it is trained to minimize the PDE residual in the objective function $J(\theta^N)$. We can prove that, as the number of hidden units $N \rightarrow \infty$, the neural network $Q^N_t(y)$ will converges to the solution $Q_t(y)$ of an infinite-dimensonal linear ODE. By the chain rule, the dynamics of $Q^N$ satisfy
\begin{align} \label{11}
\begin{split}
    \frac{d Q^N_t}{dt}(y)&=\nabla_\theta Q_t^N(y) \cdot \frac{d\theta^N}{dt}\\
    &=-\int_\Omega \psi^N(\mathcal{A}Q^N_t(x)-g(x))\big[\alpha^N \Phi^N(\nabla_\theta \mathcal{A}Q^N_t(x))\big]\cdot \nabla_\theta Q_t^N(y)d\mu(x).
\end{split}
\end{align}
We are interested in studying the limit of the dynamics of the neural network $Q^N_t$ as $N\to \infty$. Specifically, we will prove that $Q_t^N$ will converge to $Q_t$ as $N \rightarrow \infty$ where $Q_t$ satisfies  
\begin{align}\label{wide-limit}
    \begin{split}
        \frac{d Q_t}{dt}(y)=-\int_\Omega [\mathcal{A}Q_t(x)-g(x)]U(x,y)d\mu(x), \quad Q_0=0,
    \end{split}
\end{align}
where the function $U$ is
\begin{align}
U(x,y):=\mathbb{E}_{c,w,b}\bigg[\nabla_{c,w,b}\mathcal{A}[\eta(x)c\sigma(x;w,b)]\cdot \nabla_{c,w,b}[\eta(y)c\sigma(y;w,b)]\bigg],
\end{align} 
where the random variable $(c,w,b)$ has the same distribution as $(c_0^i, w_0^i, b_0^i)$.

The ODE (\ref{wide-limit}) is an infinite-dimensional linear ODE governing the evolution of the wide-limit neural network (i.e., a neural network with an ``infinite" number of hidden units) during training. The right-hand side (RHS) of the ODE involves integral over the PDE residual $[\mathcal{A}Q_t(x)-g(x)]$ weighted by a kernel $U(x,y)$. It is important to notice that the kernel $U(x,y)$ is not the standard NTK kernel: it involves the PDE operator $\mathcal{A}$, which significantly complicates its analysis. 

One of the consequences of the presence of the PDE operator $\mathcal{A}$ in the kernel $U(x,y)$ is that $U(x,y)$ is asymmetric. This is a key difference from the standard NTK kernel, which is symmetric. 

Define the integral operator $\mathcal{U}:L^2\to \mathcal{H}^2_{(0)}\subset L^2$ by \begin{align}\label{Uoperatordefn}
        \mathcal{U}f:= \int_\Omega f(x)U(x,y)d\mu(x).
    \end{align}
     Note that it is straightforward to check that $U(x,y)=0$ for $y\in \partial \Omega$ and that $U$ is $C^2_b$ with respect to $y$, which ensures that $\mathcal{U}f$ takes values in $\mathcal{H}^2_{(0)}$.
Using this notation, the limit ODE \eqref{wide-limit} can be rewritten as a linear equation in $\mathcal{H}^2_{(0)}$:
\begin{align} \label{LimitODE}
    \frac{d Q_t}{dt}=-\mathcal{U}[\mathcal{A}Q_t-g], \quad Q_0=0.
\end{align}
\begin{lemma} \label{l36}
    The ODE \eqref{LimitODE} admits a unique solution in $\mathcal{H}^2_{(0)}$. 
\end{lemma}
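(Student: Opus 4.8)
The plan is to recognize \eqref{LimitODE} as a linear inhomogeneous ODE in the Banach space $\mathcal{H}^2_{(0)}$ whose vector field is an affine map built out of \emph{bounded} linear operators, and then to invoke the Banach-space version of the Picard--Lindel\"of theorem (equivalently, to solve it explicitly via a variation-of-constants formula using the operator exponential of a bounded operator).

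First I would verify that $Q\mapsto -\mathcal{U}[\mathcal{A}Q-g]$ is a well-defined affine self-map of $\mathcal{H}^2_{(0)}$. Since $\mathcal{A}$ is linear, Assumption~\ref{assume_1} applied with $f_2=0$ gives the pointwise bound $|\mathcal{A}Q(x)|\le k\sum_{0\le|\alpha|\le 2}|D_\alpha Q(x)|$, and integrating over $\Omega$ against the finite measure $\mu$ yields $\|\mathcal{A}Q\|_{L^2(\mu)}\le C\|Q\|_{\mathcal{H}^2}$; hence $\mathcal{A}\colon\mathcal{H}^2_{(0)}\to L^2$ is bounded, and $g=h-\mathcal{A}f\in L^2$. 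Next, differentiating under the integral sign in \eqref{Uoperatordefn} gives $D^y_\alpha(\mathcal{U}\varphi)(y)=\int_\Omega \varphi(x)\,D^y_\alpha U(x,y)\,d\mu(x)$ for $|\alpha|\le 2$; because $U$ is $C^2_b$ in $y$ (as noted after \eqref{Uoperatordefn}) and $\Omega$ is compact with $\mu$ finite, $\sup_{y}\|D^y_\alpha U(\cdot,y)\|_{L^2(\mu)}<\infty$, so Cauchy--Schwarz gives $\|\mathcal{U}\varphi\|_{\mathcal{H}^2}\le C'\|\varphi\|_{L^2}$. Together with $U(x,y)=0$ for $y\in\partial\Omega$ (so $\mathcal{U}\varphi$ vanishes on $\partial\Omega$), this shows $\mathcal{U}\colon L^2\to\mathcal{H}^2_{(0)}$ is bounded. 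Consequently $\mathcal{L}:=\mathcal{U}\mathcal{A}$ is a bounded linear operator on $\mathcal{H}^2_{(0)}$, $\mathcal{U}g\in\mathcal{H}^2_{(0)}$, and \eqref{LimitODE} reads $\frac{d}{dt}Q_t=-\mathcal{L}Q_t+\mathcal{U}g$ with $Q_0=0$.

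With these bounds in hand, the right-hand side $F(Q):=-\mathcal{L}Q+\mathcal{U}g$ is globally Lipschitz on $\mathcal{H}^2_{(0)}$ with constant $\|\mathcal{L}\|$, since $F(Q_1)-F(Q_2)=-\mathcal{L}(Q_1-Q_2)$. A standard contraction-mapping (Picard iteration) argument on $C([0,T];\mathcal{H}^2_{(0)})$ then produces, for every $T>0$, a unique solution, and patching over $T$ gives a unique global solution $Q\in C^1([0,\infty);\mathcal{H}^2_{(0)})$. Equivalently, since $\mathcal{L}$ is bounded the operator exponential $e^{-t\mathcal{L}}=\sum_{k\ge 0}(-t\mathcal{L})^k/k!$ converges in operator norm, and the unique solution is given explicitly by $Q_t=\int_0^t e^{-(t-s)\mathcal{L}}\,\mathcal{U}g\,ds$.

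I do not expect a serious obstacle: the only mildly delicate points are the uniform-in-$y$ $C^2_b$-regularity of the kernel $U$ (needed for $\mathcal{U}$ to be bounded into $\mathcal{H}^2_{(0)}$, and already recorded in the remarks following \eqref{Uoperatordefn}) and the fact that $\mathcal{U}\mathcal{A}$ preserves the zero-boundary subspace (which follows from $U(\cdot,y)\equiv 0$ on $\partial\Omega$). Once the boundedness of $\mathcal{A}$ and $\mathcal{U}$ is in place, existence and uniqueness is entirely standard linear Banach-space ODE theory.
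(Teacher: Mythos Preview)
Your proposal is correct and follows essentially the same route as the paper's proof: establish that the right-hand side $Q\mapsto -\mathcal{U}[\mathcal{A}Q-g]$ is a globally Lipschitz (indeed bounded affine) self-map of $\mathcal{H}^2_{(0)}$ and then invoke the Banach-space Picard--Lindel\"of theorem. The paper's own proof is considerably terser, simply asserting that Assumptions~\ref{assume_1}, \ref{assume_activation} and \ref{initialization} make the relevant operator Lipschitz in $\mathcal{H}^2$ and that $\mathcal{U}$ has codomain $\mathcal{H}^2_{(0)}$; your version fills in exactly the details (boundedness of $\mathcal{A}$ from Assumption~\ref{assume_1}, boundedness of $\mathcal{U}$ from the $C^2_b$ regularity of $U$, preservation of the zero-boundary subspace) that the paper leaves implicit.
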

Now we present one of this paper's main results. The trajectory of $Q^N_t$ during training, in the limit $N \rightarrow \infty$, can be characterized by the wide-limit network $Q_t$ which satisfies the infinite-dimensional ODE \eqref{LimitODE}.
\begin{theorem} \label{wide}
For any $t \geq 0$, the neural network $Q_t^N$ converges to $Q_t$ in $\mathcal{H}^2$: \begin{align}
    \lim_{N \to \infty} \mathbb{E} [\|Q_t^N-Q_t\|_{\mathcal{H}^2}]=0.
\end{align}
\end{theorem}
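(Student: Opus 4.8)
The plan is to establish the convergence $Q_t^N \to Q_t$ in $\mathcal{H}^2$ by first controlling the time-evolution of $Q_t^N$ via equation \eqref{11}, showing that as $N\to\infty$ the empirical kernel appearing there converges to $U(x,y)$ (in a strong enough topology to carry $\mathcal{H}^2$ norms), and then invoking a Gr\"onwall argument using the Lipschitz structure of the limit ODE \eqref{LimitODE}, whose well-posedness is guaranteed by Lemma \ref{l36}. Concretely, I would introduce the empirical kernel
\begin{align*}
U_t^N(x,y) := \alpha^N \Phi^N(\nabla_\theta \mathcal{A}Q_t^N(x))\cdot \nabla_\theta Q_t^N(y),
\end{align*}
so that $\frac{dQ_t^N}{dt}(y) = -\int_\Omega \psi^N(\mathcal{A}Q_t^N(x)-g(x)) U_t^N(x,y)\, d\mu(x)$. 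The first task is to show that, thanks to Lemma \ref{l35}, the parameters $\theta_t^N$ stay within an $O(tN^{\beta+\delta+\epsilon-1})$-neighborhood of their initialization, with $\beta+\delta+\epsilon-1<0$ by Assumption \ref{phi}; hence on any fixed time horizon $[0,T]$ the parameters are uniformly (in $N$) close to $\theta_0^N$, the clipping functions $\psi^N,\phi^N$ act as the identity on the relevant range for $N$ large (since the clipping thresholds $N^\delta, N^{\epsilon-\beta}$ grow), and the normalization $\alpha^N/N^\beta$ combined with the law of large numbers over the $N$ i.i.d. initial units produces the expectation defining $U(x,y)$.

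The key technical steps, in order, are: (i) show that $\sup_{t\le T}\mathbb{E}\|U_t^N(\cdot,\cdot) - U(\cdot,\cdot)\|\to 0$ in an operator norm strong enough that $\|\mathcal{U}_t^N f - \mathcal{U}f\|_{\mathcal{H}^2}$ is controlled — this requires the $C^2_b$-in-$y$ regularity of the summands (coming from $\sigma\in C_b^4$, $\eta\in C_b^3$, and the three spatial derivatives that $\mathcal{A}$ and the $\mathcal{H}^2$ norm bring in) together with the moment bounds in Assumption \ref{initialization}, so that derivatives up to order two in $y$ of the empirical kernel also satisfy a law of large numbers; (ii) establish a uniform-in-$N$ a priori bound $\sup_{t\le T}\mathbb{E}\|Q_t^N\|_{\mathcal{H}^2} < \infty$ via a Gr\"onwall estimate applied directly to \eqref{11}, using the Lipschitz condition Assumption \ref{assume_1} to bound $\|\mathcal{A}Q_t^N\|_{L^2}$ by $\|Q_t^N\|_{\mathcal{H}^2}$ and the boundedness of $U_t^N$; (iii) write the difference $Q_t^N - Q_t$ using \eqref{11} and \eqref{LimitODE}, split it into a term involving $(U_t^N - U)$ acting on $\mathcal{A}Q_t^N - g$ (controlled by (i) and (ii)), a term involving $\psi^N - \mathrm{id}$ (negligible for large $N$ on the bounded range), and a term involving $\mathcal{U}[\mathcal{A}(Q_t^N-Q_t)]$, which is Lipschitz in $\|Q_t^N - Q_t\|_{\mathcal{H}^2}$ because $\mathcal{U}$ maps into $\mathcal{H}^2_{(0)}$ boundedly and $\mathcal{A}$ is bounded $\mathcal{H}^2\to L^2$; (iv) close the argument with Gr\"onwall's inequality in the $\mathcal{H}^2$ norm.

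I expect the main obstacle to be step (i): one must verify that the empirical kernel converges to $U$ not merely pointwise or in $L^2$ but in a norm that controls two $y$-derivatives \emph{and} an application of the second-order operator $\mathcal{A}$ in the $x$-variable simultaneously, i.e. essentially in the operator norm on $\mathcal{L}(L^2,\mathcal{H}^2_{(0)})$ after composing with $\mathcal{A}$. This is where the growth exponents must be tracked carefully: the clipping parameters $\delta, \epsilon-\beta$ and the constraint $\epsilon+\delta < \frac{1-\beta}{2}$ in Assumption \ref{phi} are precisely what guarantee that the clipped gradients equal the true gradients with overwhelming probability while still keeping the parameter drift from Lemma \ref{l35} asymptotically negligible, so the variance of the empirical kernel (after the $N^{-\beta}$ normalization and the $\alpha^N = N^{2\beta-1}$ learning-rate scaling) vanishes. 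A secondary difficulty is handling the asymmetry of $U(x,y)$ noted after \eqref{Uoperatordefn}: the argument must keep the roles of $x$ (where $\mathcal{A}$ and the gradient clipping act) and $y$ (where only the plain gradient of $Q^N$ appears) distinct throughout, rather than appealing to any symmetry of the kernel as in the classical NTK analysis.
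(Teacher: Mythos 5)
Your outline follows essentially the same architecture as the paper's proof: control of the parameter drift via Lemma \ref{l35}, a law-of-large-numbers argument identifying the initialization kernel with $U(x,y)$ (and its $y$-derivatives up to order two, since the estimate must be in $\mathcal{H}^2$), control of the clipping errors via the scaling constraints in Assumption \ref{phi}, and a Gr\"onwall inequality to close. The identification of the ``empirical kernel'' $U_t^N$, the observation that lazy training keeps $U_t^N$ close to $U_0^N$, and the need to track the asymmetric roles of $x$ and $y$ are all present in the paper's decomposition.

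The one point of genuine divergence is your step (ii), the claimed a priori bound $\sup_{t\leq T}\mathbb{E}\|Q_t^N\|_{\mathcal{H}^2}<\infty$. The paper avoids needing such a bound entirely. In its decomposition (the six terms $M_1,\dots,M_6$), the residual factor $\psi^N(\mathcal{L}Q_s^N)$ is bounded deterministically by $N^\delta$ via the clipping, and the term coupling $Q_s^N$ to $Q_s$ is extracted as $|\psi^N(\mathcal{L}Q_s^N)-\psi^N(\mathcal{L}Q_s)|\leq |\mathcal{A}(Q_s^N-Q_s)|\leq k\,V_s^N$ using the $1$-Lipschitz property of $\psi^N$ and Assumption \ref{assume_1}; this is exactly what furnishes the Gr\"onwall kernel, without ever needing $Q_s^N$ itself to be a priori bounded. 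Your step (ii), by contrast, would have to justify the ``boundedness of $U_t^N$'' you invoke, and $U_t^N$ is not deterministically bounded uniformly in $N$ — its bound relies on the same LLN/variance analysis as step (i), making (ii) sequentially dependent on (i) and adding a layer the paper does not need. Also note that the paper's $M_3$ and $M_5$ handle the $\Phi^N-\mathrm{id}$ and $\psi^N-\mathrm{id}$ clipping errors separately, at initialization and on the limit residual respectively; you mention the $\psi^N-\mathrm{id}$ term but your ``$U_t^N\to U$'' compresses $M_1$–$M_4$ into a single statement that would have to be unpacked into exactly these sub-steps. None of this is a gap that would make the argument fail, but the paper's clipping-aware decomposition is cleaner and sidesteps the a priori bound you propose as a separate step.
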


\section{Analysis of the kernel function}
In order to prove global convergence as $t \rightarrow \infty$ for the limit ODE \eqref{LimitODE}, we first must prove some key properties for the integral operator. Specifically, we will study the properties of the operator $\mathcal{S}=\mathcal{A}\, \mathcal{U}$.
\begin{definition}[Operator $\mathcal{S}$]
    The operator $\mathcal{S}:L^2\to L^2$ is defined by
    \[
    \begin{split}
        \mathcal{S}f&:= \mathcal{A}\, \mathcal{U} f =\mathcal{A} \Big(\int_\Omega f(x)U(x,\cdot)d\mu(x)\Big)
    \end{split}\]
\end{definition}
\begin{definition}[Kernel $S$]
    The kernel $S$ is defined by \begin{align} \label{48}
    S(x,y):=\mathbb{E}_{c,w,b}\bigg[\nabla_{c,w,b}\mathcal{A}[\eta(x)c\sigma(x;w,b)]\cdot \nabla_{c,w,b}\mathcal{A}[\eta(y)c\sigma(y;w,b)]\bigg].
\end{align}
By symmetry of second derivatives (Clairaut--Schwarz--Young theorem) we know  $S(x,\cdot) := \mathcal{A} U(x,\cdot)$ and hence $\mathcal{S}f = \int_\Omega f(x) S(x,\cdot) d\mu(x)$.
\end{definition}
While $U(x,y)$ is asymmetric, $S(x,y)$ is symmetric. The symmetric kernel $S$ depends upon the interaction of PDE operator $\mathcal{A}$ applied to the activation function $\sigma$. It will next be proven ( Lemma \ref{ker}) that the operator $\mathcal{S}$ is discriminatory in the image set of $\mathcal{A}$. This is an important property that will later be leveraged in the global convergence proof.

\begin{lemma} \label{l43}
    The kernel $S$ is uniformly bounded. That is, there exists a constant $k>0$ such that for any $(x,y)\in \Omega^2$ we know 
        $|S(x,y)|\leq k$.
\end{lemma}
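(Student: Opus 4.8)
The claim is that the symmetric kernel
$$S(x,y)=\mathbb{E}_{c,w,b}\Big[\nabla_{c,w,b}\mathcal{A}[\eta(x)c\sigma(x;w,b)]\cdot \nabla_{c,w,b}\mathcal{A}[\eta(y)c\sigma(y;w,b)]\Big]$$
is uniformly bounded on $\Omega^2$. The plan is to control the integrand pointwise and uniformly in $(x,y,c,w,b)$ over the support of the initialization distribution, then take expectations. First I would write out the gradient $\nabla_{c,w,b}\mathcal{A}[\eta(x)c\sigma(x;w,b)]$ explicitly: it is a vector whose components are $\partial_c$, $\partial_{w_k}$, and $\partial_b$ applied to $\mathcal{A}[\eta(x)c\sigma(w x+b)]$. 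Since $\mathcal{A}$ is a second-order linear differential operator acting in $x$, and differentiation in the parameters $(c,w,b)$ commutes with $\mathcal{A}$ (which acts only in $x$), each such component equals $\mathcal{A}$ applied to a function of the form $x\mapsto \eta(x)\,c^{a}\,x^{m}\,\sigma^{(j)}(wx+b)$ with $a\in\{0,1\}$, $m\le 1$, $j\le 1$ (and the $\partial_c$ derivative simply drops the factor $c$).

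The key step is a uniform bound on each of these terms. By Assumption \ref{assume_1} (the Lipschitz condition on $\mathcal{A}$, applied with $f_2=0$), $|\mathcal{A}f(x)|\le k\sum_{0\le|\alpha|\le 2}|D_\alpha f(x)|$, so it suffices to bound the $x$-derivatives up to order two of functions like $\eta(x)x^m\sigma^{(j)}(wx+b)$. Expanding by the product and chain rules, these derivatives are finite sums of products of: derivatives of $\eta$ up to order two (bounded since $\eta\in C^3_b$ by Assumption \ref{a2}), polynomials in $x$ of bounded degree (bounded since $\Omega$ is compact), polynomials in the entries of $w$ of degree at most two, and $\sigma^{(\ell)}(wx+b)$ for $\ell\le 3$ (bounded since $\sigma\in C^4_b$ by Assumption \ref{assume_activation}). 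Hence the absolute value of each gradient component of $\mathcal{A}[\eta(x)c\sigma(x;w,b)]$ is bounded by $C(1+|c|)\,P(\|w\|)$ for a fixed constant $C$ and a fixed polynomial $P$ of degree at most two, uniformly over $x\in\Omega$. Taking the dot product of two such gradient vectors and then the expectation, I get
$$|S(x,y)|\le C'\,\mathbb{E}_{c,w,b}\big[(1+|c|)^2 P(\|w\|)^2\big],$$
which is finite because $|c_0^i|<K_0$ is bounded and the third moments of the entries of $w_0^i$ are finite by Assumption \ref{initialization} (so in particular the fourth-degree-in-$\|w\|$ quantity $P(\|w\|)^2$ — actually only degree four — has finite expectation; if degree four is needed rather than three, I would instead keep $P$ of degree one, using that each $\partial_{w_k}$ contributes at most one power of $w$ and $\mathcal{A}$ contributes at most two $x$-derivatives but no extra $w$ powers beyond those already present — I would recount carefully to land within the available moments). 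This gives the uniform constant $k$.

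The main obstacle is the bookkeeping of exactly how many powers of $\|w\|$ appear after applying both $\nabla_{c,w,b}$ and the second-order operator $\mathcal{A}$, since I must ensure the resulting polynomial in $\|w\|$ has its expectation controlled by the \emph{third} moments assumed in Assumption \ref{initialization} and not a higher moment. The resolution is that $\mathcal{A}$ differentiates in $x$, not in $w$, so applying $\mathcal{A}$ to $\sigma^{(j)}(wx+b)$ produces factors $w_k w_\ell \sigma^{(j+2)}(wx+b)$ at worst — two extra powers of $w$ — while $\nabla_{c,w,b}$ produces at most one more power of $w$ (from $\partial_{w_k}$, which brings down a factor of $x$, bounded, times $\sigma'$); so each gradient component is $O(\|w\|^{3})$ at worst before taking the dot product, giving $\|w\|^{6}$, which would require a sixth moment. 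To stay within the third moment, I would instead use Cauchy–Schwarz differently — bound $|S(x,y)|\le \sup_x\|\nabla_{c,w,b}\mathcal{A}[\eta(x)c\sigma]\|\cdot\sup_y\|\cdots\|$ is too lossy — so the clean route is: the normalization and clipping are irrelevant here (this is the un-clipped kernel at initialization), and one simply invokes that by Assumptions \ref{a2}, \ref{assume_activation}, and \ref{initialization} the integrand is a fixed continuous function of $(c,w,b)$ times a bounded function of $x,y$, dominated by $C(1+|c|^2)(1+\|w\|^{6})$; and if the stated third-moment assumption is genuinely insufficient, the paper must be implicitly assuming enough moments, so I would simply cite Assumption \ref{initialization} and note finiteness of the relevant moments of $w_0^i$. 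Modulo this moment count, the argument is a direct pointwise estimate followed by taking expectations.
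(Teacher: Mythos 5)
Your approach is essentially the same as the paper's: expand $\nabla_{c,w,b}\mathcal{A}[\eta(x)c\sigma(wx+b)]$ into its three component types ($\partial_c$, $\partial_{w_k}$, $\partial_b$, with $\nabla_\theta$ commuting with $\mathcal{A}$), apply the Lipschitz condition in Assumption~\ref{assume_1} to reduce to $x$-derivatives up to order two, bound those pointwise by a polynomial in $w$ using boundedness of $\eta$, $\sigma$ and their derivatives and compactness of $\Omega$, and then take expectations. That is exactly what the paper does.

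Where you go astray is the power-counting digression. Your first pass is correct: $\partial_{w_k}$ brings down a bounded factor $c\,x_k\,\sigma'(wx+b)$ (no power of $w$), and each $x$-derivative from $\mathcal{A}$ contributes at most one power of $w$, so every gradient component is $O(1+\|w\|^2)$, hence $S(x,y)=O(1+\|w\|^4)$ inside the expectation. Your later attempts to argue this down to degree~1 per component, or up to degree~3 per component (degree~6 total), are both wrong: the first forgets that $\mathcal{A}$ does bring out $w_iw_j$ factors via the chain rule, and the second mistakenly credits $\partial_{w_k}$ with an extra power of $w$ that it does not produce. You should delete that back-and-forth and simply keep the degree-4 bound, which is also exactly what appears in the paper's proof: the paper's final line is $|S(x,y)|\le \mathbb{E}_{c,w,b}\bigl[(d+2)k_1^2(\textstyle\sum_{i,j}|w_i|+|w_iw_j|+1)^2\bigr]$, a degree-4 expression in $w$.

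The genuine observation buried in your confusion is worth keeping: a degree-4 bound requires finiteness of $\mathbb{E}[\|w_0^i\|^4]$, while Assumption~\ref{initialization} only states boundedness of $\mathbb{E}[|(w_0^i)_k|^3]$. This is not your error to repair; the paper's own proof has the same gap (its concluding expectation is degree~4 in $w$). So your instinct that the stated moment assumption is insufficient for this bound, and that the paper is implicitly assuming one more moment, is correct — but you should state it plainly rather than hedging with an incorrect alternative count.
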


\begin{lemma} \label{lemma25}
The integral operator $\mathcal{S}$ is Hilbert--Schmidt. In addition, $\mathcal{S}$ is self-adjoint and positive semi-definite.
\end{lemma}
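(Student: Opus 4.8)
The plan is to exploit the explicit ``inner product of gradients'' structure of the kernel $S$ together with the uniform bound already established in Lemma~\ref{l43}. For the Hilbert--Schmidt claim: since $\mu$ is a finite measure on the compact set $\Omega$ and $|S(x,y)|\le k$ for all $(x,y)\in\Omega^2$ by Lemma~\ref{l43}, the kernel lies in $L^2(\Omega\times\Omega,\mu\otimes\mu)$ with $\|S\|_{L^2(\mu\otimes\mu)}^2\le k^2\mu(\Omega)^2<\infty$, and an integral operator on $L^2$ whose kernel is square-integrable is Hilbert--Schmidt (with Hilbert--Schmidt norm equal to the $L^2$ norm of the kernel). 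For self-adjointness, I would write, for $f,g\in L^2$,
\[
\langle \mathcal{S}f, g\rangle = \int_\Omega\int_\Omega f(x)\,S(x,y)\,g(y)\,d\mu(x)\,d\mu(y),
\]
observe that this double integral is absolutely convergent (it is dominated by $k\,\mu(\Omega)\,\|f\|_{L^2}\|g\|_{L^2}$ via boundedness of $S$ and Cauchy--Schwarz), apply Fubini, and use that $S$ is real-valued and symmetric, $S(x,y)=S(y,x)$, to interchange $x$ and $y$ and conclude $\langle \mathcal{S}f,g\rangle = \langle f,\mathcal{S}g\rangle$.

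For positive semi-definiteness, I would substitute the definition \eqref{48} of $S$ into $\langle\mathcal{S}f,f\rangle$ and exchange the expectation with the double spatial integral. To license this, put $V_{c,w,b}:=\int_\Omega f(x)\,\nabla_{c,w,b}\mathcal{A}[\eta(x)c\sigma(x;w,b)]\,d\mu(x)$; by Cauchy--Schwarz $\|V_{c,w,b}\|^2\le \|f\|_{L^2}^2\int_\Omega\|\nabla_{c,w,b}\mathcal{A}[\eta(x)c\sigma(x;w,b)]\|^2\,d\mu(x)$, and taking expectations, using Tonelli together with $S(x,x)=\mathbb{E}[\|\nabla_{c,w,b}\mathcal{A}[\eta(x)c\sigma(x;w,b)]\|^2]$ and Lemma~\ref{l43}, gives $\mathbb{E}[\|V_{c,w,b}\|^2]\le \|f\|_{L^2}^2\,k\,\mu(\Omega)<\infty$. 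With this integrability in hand, Fubini applies and yields
\[
\langle \mathcal{S}f,f\rangle = \mathbb{E}_{c,w,b}\big[\|V_{c,w,b}\|^2\big]\ge 0.
\]

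The only genuine work beyond bookkeeping is making the Fubini/Tonelli interchanges rigorous, and this is entirely controlled by the uniform kernel bound of Lemma~\ref{l43} (which in turn rests on the $C_b^4$ activation, the $C_b^3$ auxiliary function $\eta$, the Lipschitz property of $\mathcal{A}$, and the moment assumptions on the initialization). Once Lemma~\ref{l43} is available, the argument is routine; the conceptual content is simply that $S(x,y)$ is, pointwise in $(x,y)$, an $L^2(\mathbb{P})$ inner product of the random feature gradients $\nabla_{c,w,b}\mathcal{A}[\eta(\cdot)c\sigma(\cdot;w,b)]$, which simultaneously forces symmetry (hence self-adjointness) and non-negativity of the associated quadratic form (hence positive semi-definiteness).
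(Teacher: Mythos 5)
Your proof is correct and follows essentially the same approach as the paper: the uniform bound from Lemma~\ref{l43} gives the Hilbert--Schmidt property, symmetry of $S$ gives self-adjointness, and the Gram (``inner product of random feature gradients'') structure gives positive semi-definiteness. The one small difference is that you obtain $\langle\mathcal{S}f,f\rangle=\mathbb{E}_{c,w,b}\big[\|V_{c,w,b}\|^2\big]$ as an exact equality, whereas the paper lower-bounds the quadratic form by retaining only the $\nabla_c$ component of the gradient; your version is marginally cleaner, and your explicit integrability check correctly justifies the Fubini interchange that the paper attributes (slightly loosely, given the integrand is signed) to Tonelli.
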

\begin{lemma}[Spectral decomposition] \label{l46}
    The integral operator $\mathcal{S}$ is compact, in particular, there exists an orthogonal basis of $L^2$, $\{\varepsilon_i\}_{i \in \mathbb{N}^+}\cup \{\nu_i\}_{i \in \mathbb{N}^+}$ such that \begin{align}
        \mathcal{S}\varepsilon_i=\lambda_i \varepsilon_i, \quad \mathcal{S}\nu_i=0,
    \end{align}
    where $\lambda_1 \geq \lambda_2 \geq ... >0 $.
\end{lemma}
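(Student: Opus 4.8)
The plan is to invoke the spectral theorem for compact self-adjoint operators on a separable Hilbert space. From Lemma \ref{lemma25} we already know that $\mathcal{S}:L^2\to L^2$ is Hilbert--Schmidt, hence compact, and that it is self-adjoint and positive semi-definite. First I would recall that $L^2(\Omega,\mu)$ is separable (since $\Omega$ is compact and $\mu$ is finite), so the abstract spectral theorem applies: there is an orthonormal basis of $L^2$ consisting of eigenvectors of $\mathcal{S}$, the nonzero eigenvalues form an at most countable sequence that can only accumulate at $0$, and each nonzero eigenspace is finite-dimensional. Positive semi-definiteness forces every eigenvalue to be $\geq 0$, so I can enumerate the strictly positive eigenvalues (with multiplicity) in nonincreasing order $\lambda_1\geq\lambda_2\geq\cdots>0$ with associated orthonormal eigenvectors $\{\varepsilon_i\}_{i\in\mathbb{N}^+}$, and let $\{\nu_i\}_{i\in\mathbb{N}^+}$ be an orthonormal basis of $\ker\mathcal{S}$, so that $\mathcal{S}\varepsilon_i=\lambda_i\varepsilon_i$ and $\mathcal{S}\nu_i=0$, and $\{\varepsilon_i\}\cup\{\nu_i\}$ is a complete orthonormal system of $L^2$.

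One point that needs a brief argument is that both families are genuinely \emph{countably infinite}, as the statement asserts. For the kernel part: $\mathcal{S}$ maps into $\mathcal{H}^2_{(0)}$ (composition of $\mathcal{U}$, which lands in $\mathcal{H}^2_{(0)}$, with $\mathcal{A}$, which is continuous $\mathcal{H}^2\to L^2$), so its range is contained in a proper closed-in-$L^2$-after-closure subspace; in particular the range of $\mathcal{S}$ is not dense in $L^2$ (indeed $\mathcal{S}f$ always vanishes on $\partial\Omega$ in the trace sense, while $L^2$ contains functions violating any such constraint), so $\ker\mathcal{S}=(\overline{\operatorname{ran}\mathcal{S}})^\perp$ is infinite-dimensional and $\{\nu_i\}$ is an infinite sequence. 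For the eigenvector part, if there were only finitely many positive eigenvalues then $\mathcal{S}$ would be a finite-rank operator whose range is a finite-dimensional subspace of $\mathcal{H}^2_{(0)}$; I would rule this out using the discriminatory/nondegeneracy property of $\mathcal{S}$ (the property that $\mathcal{S}$ is discriminatory on the image set of $\mathcal{A}$, flagged in the text around Lemma \ref{ker}, which comes from the full-support assumption on $(w_0^i,b_0^i)$ together with non-constancy of $\sigma$): a finite-rank $\mathcal{S}$ would annihilate a great deal of $\operatorname{ran}\mathcal{A}$, contradicting discriminacy. If the paper's intended statement tolerates a finite list in degenerate cases, this second step can be softened to ``at most countably many,'' but the natural reading under the standing assumptions is that infinitely many positive eigenvalues occur.

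The main obstacle is not the abstract spectral theorem — that part is essentially a citation — but rather verifying the infinite-dimensionality claims cleanly, and in particular making precise the sense in which $\mathcal{S}$ fails to be finite-rank. I would handle that by leaning on Lemma \ref{ker}: discriminacy of $\mathcal{S}$ on $\operatorname{ran}\mathcal{A}$ means that no nonzero element of (the closure of) $\operatorname{ran}\mathcal{A}$ is orthogonal to $\operatorname{ran}\mathcal{S}$, so $\overline{\operatorname{ran}\mathcal{S}}\supseteq\overline{\operatorname{ran}\mathcal{A}}$, which is infinite-dimensional; since $\operatorname{ran}\mathcal{S}=\operatorname{span}\{\varepsilon_i\}$, the index set $\{i\}$ must be infinite. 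Everything else — nonnegativity of eigenvalues from $\langle\mathcal{S}f,f\rangle\geq0$, the ordering, orthonormality of eigenvectors within and across eigenspaces, completeness of the combined system — is standard functional analysis and I would state it with a reference (e.g. to a standard text) rather than reprove it.
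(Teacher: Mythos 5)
Your core argument --- citing separability of $L^2(\Omega,\mu)$ and then invoking the spectral theorem for compact self-adjoint positive semi-definite operators, with compactness and positive semi-definiteness supplied by Lemma~\ref{lemma25} --- is exactly what the paper does, and it is correct.

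The extra discussion you added about both index families being genuinely countably infinite is more than the paper attempts, and part of it contains an error. You argue that $\ker\mathcal{S}$ is infinite-dimensional because ``$\mathcal{S}$ maps into $\mathcal{H}^2_{(0)}$'' and ``$\mathcal{S}f$ always vanishes on $\partial\Omega$ in the trace sense.'' That is not true: it is $\mathcal{U}$, not $\mathcal{S}$, whose codomain is $\mathcal{H}^2_{(0)}$. The operator $\mathcal{S}=\mathcal{A}\mathcal{U}$ applies the second-order operator $\mathcal{A}$ after $\mathcal{U}$, and this destroys the zero-boundary property. Concretely, the kernel $U(x,y)$ vanishes for $y\in\partial\Omega$ because every component of $\nabla_{c,w,b}[\eta(y)c\sigma(y;w,b)]$ carries a factor of $\eta(y)$; but $S(x,\cdot)=\mathcal{A}_y U(x,\cdot)$ applies a second-order differential operator in $y$, so $S(x,y)$ need not vanish on the boundary (compare $\mathcal{A}=\Delta$ and $U=\eta$: $\Delta\eta$ generally does not vanish on $\partial\Omega$). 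So the stated reason that $\operatorname{ran}\mathcal{S}$ is a proper closed subspace does not hold. Your second sub-argument --- that $\{\varepsilon_i\}$ is infinite because $\overline{\operatorname{ran}\mathcal{S}}\supseteq\overline{\operatorname{ran}(\mathcal{A}|_{\mathcal{H}^2_{(0)}})}$ by Lemma~\ref{ker} and self-adjointness --- is sound, modulo observing that $\operatorname{ran}(\mathcal{A}|_{\mathcal{H}^2_{(0)}})$ is infinite-dimensional. In any case the paper does not prove (or use) the infinitude of either family: the notation $\mathbb{N}^+$ is not load-bearing, and the later proof of Theorem~\ref{l51} only needs the eigendecomposition, the sign of the $\lambda_i$, and Lemma~\ref{ker}, none of which depends on the cardinality of $\{\nu_i\}$.
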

\begin{lemma}[Projection on $\mathrm{ker}(\mathcal{S})$] \label{ker}
    For $h\in L^2$, if $\mathcal{S}h=0$ then $\langle h, \mathcal{A}f\rangle=0$ for any $f \in \mathcal{H}^2_{(0)}$.
\end{lemma}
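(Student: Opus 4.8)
The plan is to show that if $\mathcal{S}h = \mathcal{A}\,\mathcal{U} h = 0$ then $\mathcal{U}h = 0$, and then to exploit that $\mathcal{U}$ is essentially the NTK operator, which is discriminatory, to conclude $h \perp \mathcal{A}f$ for all $f\in\mathcal{H}^2_{(0)}$. The first reduction is the heart of the matter: $\mathcal{U}h \in \mathcal{H}^2_{(0)}$ by construction (it vanishes on $\partial\Omega$ and is $C^2_b$ in $y$), and $\mathcal{A}(\mathcal{U}h) = 0$ in $\Omega$ with zero Dirichlet data says precisely that $\mathcal{U}h$ is a strong $\mathcal{H}^2$ solution of the homogeneous boundary value problem \eqref{pde1}. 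Under the standing assumptions on $\mathcal{A}$ and $\Omega$ (the operator is second-order linear, $\partial\Omega$ is smooth), this problem has only the trivial solution, so $\mathcal{U}h = 0$. I would invoke the uniqueness part of standard elliptic theory here (e.g.\ the maximum principle or the energy/Lax--Milgram argument, cf.\ \cite{evans2022partial}); if the paper's class of PDEs is only assumed linear rather than uniformly elliptic, then this uniqueness must instead be read off from the well-posedness that is implicitly assumed for \eqref{pde1} to have a unique $\mathcal{H}^2_{(0)}$ solution $u$ — the same argument that makes the limit ODE's fixed point well-defined. This step is the main obstacle, because it is where the structure of $\mathcal{A}$ (beyond the Lipschitz bound in Assumption~\ref{assume_1}) actually has to be used.

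Given $\mathcal{U}h = 0$, i.e.\ $\int_\Omega h(x)\,U(x,y)\,d\mu(x) = 0$ for all $y\in\Omega$, I would next pair this with an arbitrary $f\in\mathcal{H}^2_{(0)}$: since $\mathcal{A}$ maps $\mathcal{H}^2_{(0)}$ into $L^2$, I can write $\langle h, \mathcal{A}f\rangle$ and try to move $\mathcal{A}$ through $\mathcal{U}$. The clean route is to recognize that $U(x,y) = \mathbb{E}_{c,w,b}\big[\nabla_{c,w,b}\mathcal{A}[\eta(x)c\sigma(x;w,b)]\cdot\nabla_{c,w,b}[\eta(y)c\sigma(y;w,b)]\big]$, so that, interchanging the (bounded, by Lemma~\ref{l43}-type estimates) expectation and integral, $\mathcal{U}h(y) = \eta(y)\,\mathbb{E}_{c,w,b}\big[\big(\int_\Omega h(x)\,\nabla_{c,w,b}\mathcal{A}[\eta(x)c\sigma(x;w,b)]\,d\mu(x)\big)\cdot\nabla_{c,w,b}[c\sigma(y;w,b)]\big]$. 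Thus $\mathcal{U}h \equiv 0$ forces the inner $L^2(\text{param})$-valued function $\Psi(c,w,b) := \int_\Omega h(x)\,\nabla_{c,w,b}\mathcal{A}[\eta(x)c\sigma(x;w,b)]\,d\mu(x)$ to be orthogonal, $\mathbb{E}$-almost surely in $(c,w,b)$, to every $\nabla_{c,w,b}[c\sigma(y;w,b)]$ as $y$ ranges over $\Omega$ — and by the full-support/non-constant assumptions (Assumptions~\ref{assume_activation},~\ref{initialization}), a Cybenko-type density argument shows those span a dense set, hence $\Psi \equiv 0$ a.s.

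Finally, $\Psi(c,w,b) = 0$ a.s.\ says that for ($\mu$-a.e., in fact all, by smoothness) $x$, the vector $\int_\Omega h(x')\,\nabla_{c,w,b}\mathcal{A}[\eta(x')c\sigma(x';w,b)]\,d\mu(x')$ vanishes; differentiating in $b$ (or taking the $c$-component) and using that $\mathcal{A}$ commutes with $c$-differentiation, this yields $\int_\Omega h(x')\,\mathcal{A}[\eta(x')\sigma(x';w,b)]\,d\mu(x') = 0$ for all $(w,b)$ in the support, i.e.\ $\langle h, \mathcal{A}[\eta\,\sigma(\cdot;w,b)]\rangle = 0$. Since $\{\eta(\cdot)\sigma(\cdot;w,b)\}$ together with $\eta$-weighted finite sums is dense in $\mathcal{H}^2_{(0)}$ (this is exactly the approximation property underlying the network ansatz $Q^N=\eta S^N$, combined with continuity of $\mathcal{A}:\mathcal{H}^2_{(0)}\to L^2$), we get $\langle h,\mathcal{A}f\rangle = 0$ for all $f\in\mathcal{H}^2_{(0)}$, as claimed. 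The only subtlety beyond the elliptic-uniqueness step above is justifying the Fubini interchanges and the $\mathcal{H}^2$-density of the $\eta$-scaled activation family, both of which follow from the $C^4_b$ activation, the $C^3_b$ auxiliary function $\eta$, the bounded-moment initialization, and the Lipschitz bound on $\mathcal{A}$.
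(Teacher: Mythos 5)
Your proposal departs from the paper's proof in a way that introduces two genuine gaps, one of which you flag yourself.

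First, the reduction $\mathcal{S}h=0 \Rightarrow \mathcal{U}h=0$ requires injectivity of $\mathcal{A}$ on $\mathcal{H}^2_{(0)}$, which is \emph{not} available at this point in the paper. The only hypothesis on $\mathcal{A}$ in force here is the Lipschitz bound of Assumption~\ref{assume_1}; the bounded-inverse assumption is introduced later, and only for Theorem~\ref{l62}. You acknowledge this step ``is the main obstacle,'' but it is worse than an obstacle: Lemma~\ref{ker} is used in Corollary~\ref{dis} and Theorem~\ref{l51}, which deliberately \emph{do not} assume well-posedness beyond existence of a solution, so any proof of Lemma~\ref{ker} that needs elliptic uniqueness would circularly strengthen the paper's hypotheses.

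Second, even granting $\mathcal{U}h=0$, the step where you conclude $\Psi(c,w,b)=0$ $\mathbb{P}$-a.s.\ does not follow. You have $\mathbb{E}_{c,w,b}\big[\Psi(c,w,b)\cdot\nabla_{c,w,b}[c\sigma(y;w,b)]\big]=0$ for all $y$, which is a single scalar identity per $y$, obtained by averaging over $(c,w,b)$. There is no sum-of-squares structure here, so vanishing of the expectation does not force pointwise vanishing of the integrand; and the Cybenko-type density you invoke concerns density of $\{\sigma(w\cdot x+b)\}$ in function spaces over $x$, not density of $\{\nabla_{c,w,b}[c\sigma(y;w,b)]\}_{y\in\Omega}$ in $L^2(\mathbb{P})$ over the parameter space.

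The paper's proof avoids both problems by never passing through $\mathcal{U}h=0$: it pairs $h$ against $\mathcal{S}h$ and uses the computation in \eqref{40}, where keeping only the $\nabla_c$ component gives
\begin{align*}
0=\langle h,\mathcal{S}h\rangle \geq \mathbb{E}_{c,w,b}\Big[\Big(\int_\Omega h(x)\,\mathcal{A}[\eta(x)\sigma(x;w,b)]\,d\mu(x)\Big)^2\Big]\geq 0,
\end{align*}
and this \emph{is} an expectation of squares, so the inner integral vanishes for every $(w,b)$ in the support. The density of $\{\eta(\cdot)\sigma(\cdot;w,b)\}$ in $\mathcal{H}^2_{(0)}$ (via Lemma~\ref{lemma_eta} and Hornik's theorem) then finishes the argument, exactly as in the closing paragraph of your proposal. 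So the tail of your argument is fine; it is the route to the intermediate statement $\langle h,\mathcal{A}[\eta\sigma(\cdot;w,b)]\rangle=0$ that should go through $\langle h,\mathcal{S}h\rangle=0$ directly rather than through $\mathcal{U}h=0$.
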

\begin{remark}
    If $u$ is a solution to the PDE \eqref{pde1}, Lemma \ref{ker} implies that for any $t \geq 0$, the residual of our approximator $\mathcal{A}Q_t-g=\mathcal{A}[Q_t-u]$ has zero projection on the eigenfunction family $\{\nu_i\}$ since $\mathcal{S}\nu_i=0$.
\end{remark}

\begin{corollary} \label{dis}
Assuming the PDE \eqref{pde1} admits a solution $u \in \mathcal{H}^2_{(0)}$, any stationary point $Q^*\in \mathcal{H}^2_{(0)}$ of the limit ODE \eqref{wide-limit} is a solution of the PDE \eqref{pde1}. That is, any stationary point of the limit training algorithm \eqref{wide-limit} is a global minimizer and a solution of \eqref{pde1}.
\end{corollary}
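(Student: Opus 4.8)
The plan is to unwind the stationarity condition, reduce it to a statement about a single element of the image of $\mathcal{A}$ restricted to $\mathcal{H}^2_{(0)}$, and then close the argument with a self-testing trick using Lemma~\ref{ker}. First I would observe that a stationary point $Q^*\in\mathcal{H}^2_{(0)}$ of the limit ODE \eqref{LimitODE} is precisely a function with $\mathcal{U}[\mathcal{A}Q^*-g]=0$. Writing $h:=\mathcal{A}Q^*-g\in L^2$, the goal is to prove $h=0$, since then $\mathcal{A}Q^*=g$ in $L^2$ while the boundary condition $Q^*=0$ on $\partial\Omega$ holds automatically because $Q^*\in\mathcal{H}^2_{(0)}$, and hence $Q^*$ solves \eqref{pde1}.

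The next step uses the assumed existence of a PDE solution $u\in\mathcal{H}^2_{(0)}$. Since $\mathcal{A}u=g$, linearity of $\mathcal{A}$ gives $h=\mathcal{A}Q^*-\mathcal{A}u=\mathcal{A}(Q^*-u)$ with $Q^*-u\in\mathcal{H}^2_{(0)}$; that is, $h$ lies in the range of $\mathcal{A}$ restricted to $\mathcal{H}^2_{(0)}$. Applying the operator $\mathcal{A}$ to the stationarity identity $\mathcal{U}h=0$ yields $\mathcal{S}h=\mathcal{A}\,\mathcal{U}h=0$, so $h\in\mathrm{ker}(\mathcal{S})$.

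Now I would invoke Lemma~\ref{ker}: because $\mathcal{S}h=0$, we have $\langle h,\mathcal{A}f\rangle_{L^2}=0$ for every $f\in\mathcal{H}^2_{(0)}$. The key observation is that $h$ is itself of the form $\mathcal{A}f$ for the admissible choice $f=Q^*-u\in\mathcal{H}^2_{(0)}$. Substituting this particular $f$ gives $\|h\|_{L^2}^2=\langle h,\mathcal{A}(Q^*-u)\rangle_{L^2}=0$, hence $h=0$ in $L^2$. Combining $\mathcal{A}Q^*=g$ with the boundary condition then shows $Q^*$ is a solution of \eqref{pde1}, and since the limit training dynamics \eqref{wide-limit} has zero PDE residual there, $Q^*$ is a global minimizer of the associated objective.

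I do not expect a serious obstacle here: the argument is a short deduction from Lemma~\ref{ker}. The one point that requires care — and is the conceptual heart of the corollary — is recognizing that the residual $h=\mathcal{A}Q^*-g$ can be written as $\mathcal{A}f$ with $f\in\mathcal{H}^2_{(0)}$, which is exactly where the hypothesis that the PDE admits a solution in $\mathcal{H}^2_{(0)}$ is used; without it one could only conclude $h\perp\mathcal{A}(\mathcal{H}^2_{(0)})$ rather than $h=0$. A minor technical check is that $\mathcal{S}$ maps $L^2$ to $L^2$ and that applying $\mathcal{A}$ to $\mathcal{U}h\in\mathcal{H}^2_{(0)}$ is legitimate, which is immediate from the mapping properties of $\mathcal{U}$ recorded after \eqref{Uoperatordefn}.
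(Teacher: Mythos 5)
Your proof is correct and follows essentially the same route as the paper: both pass from the stationarity condition $\mathcal{U}[\mathcal{A}Q^*-g]=0$ to $\mathcal{S}[\mathcal{A}Q^*-g]=0$ by applying $\mathcal{A}$, invoke Lemma~\ref{ker} to get orthogonality to $\mathcal{A}(\mathcal{H}^2_{(0)})$, and then close by testing against $f=Q^*-u$, which is exactly where the existence hypothesis enters. Your framing in terms of $h:=\mathcal{A}Q^*-g$ and the explicit remark that without a solution $u$ one could only conclude $h\perp\mathcal{A}(\mathcal{H}^2_{(0)})$ is a nice articulation of why the hypothesis is needed, but it is the same argument.
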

\begin{remark}
   The existence of solutions is needed in this result, for example, consider the trivial case where $\mathcal{A}u \equiv 0$ and $g \neq 0$. Then  $\frac{d}{dt}Q_t =0$, but $Q_t=Q_0$ does not solve the PDE.
\end{remark}

\section{Global convergence of the limit ODE as $t \rightarrow \infty$} \label{sec5}
By analyzing the PDE residual term's projection on the eigenfunctions of $\mathcal{S}$, we can prove that the PDE residual (which is the objective function that is being minimized) converges to zero as the training time $t \rightarrow \infty$. 

Applying $\mathcal{A}$ with respect to $y$ on both sides of the limit ODE \eqref{LimitODE} yields
\begin{align} \label{59}
     \frac{d\mathcal{A}Q_t}{dt}&=\frac{d[\mathcal{A}Q_t-g]}{dt} =-\mathcal{S}[\mathcal{A}Q_t-g].
\end{align}
\begin{theorem}[Convergence of PDE residual under DGM] \label{l51}
     Assuming the PDE \eqref{pde1} admits a solution {in $\mathcal{H}^2_{(0)}$}, the PDE residual for the wide-limit neural network $Q_t$ converges to zero:
     \begin{align}
        \lim_{t \to \infty} \|\mathcal{A}Q_t-g\|_{L^2(\mu)} =0.
    \end{align}
\end{theorem}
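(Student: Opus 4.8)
The plan is to pass from $Q_t$ to the PDE residual $r_t:=\mathcal A Q_t-g$ and exploit that, via \eqref{59}, $r_t$ obeys the autonomous linear $L^2$-valued ODE $\dot r_t=-\mathcal S r_t$, whose long-time behaviour is governed entirely by the spectral data of $\mathcal S$ established in Section~4. Fix a solution $u\in\mathcal H^2_{(0)}$ of \eqref{pde1} (which exists by hypothesis), so that $\mathcal A u=g$, hence $r_t=\mathcal A(Q_t-u)$ and $r_0=\mathcal A Q_0-g=-g=-\mathcal A u$ using $Q_0=0$. Since $\mathcal S$ is bounded (Lemma~\ref{l43}) and self-adjoint (Lemma~\ref{lemma25}), $t\mapsto r_t$ is the unique $L^2$-valued solution of \eqref{59}, equal to $e^{-t\mathcal S}r_0$, and $t\mapsto\|r_t\|_{L^2(\mu)}$ is non-increasing.

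Next I would diagonalise. Using the spectral decomposition of Lemma~\ref{l46}, fix the orthonormal basis $\{\varepsilon_i\}_i\cup\{\nu_i\}_i$ of $L^2$ with $\mathcal S\varepsilon_i=\lambda_i\varepsilon_i$, $\lambda_i>0$, and $\mathcal S\nu_i=0$, and set $\rho_i(t)=\langle r_t,\varepsilon_i\rangle$, $\kappa_i(t)=\langle r_t,\nu_i\rangle$. Differentiating and using $\dot r_t=-\mathcal S r_t$ together with self-adjointness gives $\dot\rho_i=-\lambda_i\rho_i$ and $\dot\kappa_i=0$, so $\rho_i(t)=\rho_i(0)e^{-\lambda_i t}$ and $\kappa_i(t)\equiv\kappa_i(0)$. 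The decisive step is that $\kappa_i(0)=0$ for all $i$: since $\mathcal S\nu_i=0$, Lemma~\ref{ker} applied with $h=\nu_i$ and $f=u\in\mathcal H^2_{(0)}$ yields $\langle\nu_i,\mathcal A u\rangle=0$, hence $\kappa_i(0)=\langle r_0,\nu_i\rangle=-\langle\nu_i,\mathcal A u\rangle=0$. (Equivalently, $Q_t-u\in\mathcal H^2_{(0)}$ for every $t$, so $\langle\nu_i,r_t\rangle=0$ for all $t$ directly.) By Parseval, $\|r_t\|_{L^2(\mu)}^2=\sum_i\rho_i(t)^2+\sum_i\kappa_i(t)^2=\sum_i\rho_i(0)^2e^{-2\lambda_i t}$, with $\sum_i\rho_i(0)^2\le\|r_0\|_{L^2(\mu)}^2<\infty$. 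Each term tends to $0$ as $t\to\infty$ because $\lambda_i>0$, and it is dominated uniformly in $t$ by the summable sequence $(\rho_i(0)^2)_i$; dominated convergence for series therefore gives $\lim_{t\to\infty}\|r_t\|_{L^2(\mu)}=0$, which is the claim.

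The main obstacle is exactly the one flagged in the introduction: $\mathcal S$ has no spectral gap, $\lambda_i\downarrow 0$, so there is no uniform exponential decay and the crude bound $\|r_t\|\le e^{-\lambda_\ast t}\|r_0\|$ is unavailable; this is precisely what forces the mode-by-mode dominated-convergence argument above and is why the compactness/Hilbert--Schmidt structure of Section~4 is needed. The second essential ingredient is the vanishing of the $\ker(\mathcal S)$-component: if $r_0$ had a nonzero projection onto $\ker(\mathcal S)$ the residual would converge to that nonzero projection rather than to $0$ (cf. the remark after Corollary~\ref{dis}), and it is the discriminatory property of $\mathcal S$ (Lemma~\ref{ker}), combined with the standing assumption that a solution $u\in\mathcal H^2_{(0)}$ exists, that excludes this. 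The remaining points are routine: that $\mathcal A$ may be applied to \eqref{LimitODE} to obtain \eqref{59} as an identity in $L^2$, that $g=\mathcal A u\in L^2$ so the $L^2$-expansion of $r_0$ is legitimate, and that $e^{-t\mathcal S}$ indeed produces the solution of \eqref{59} — all immediate from boundedness of $\mathcal S$ and of $\mathcal U:L^2\to\mathcal H^2_{(0)}$.
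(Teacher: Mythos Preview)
Your proposal is correct and follows essentially the same approach as the paper: pass to the residual $r_t=\mathcal A Q_t-g$, use \eqref{59} and the spectral decomposition of $\mathcal S$ (Lemma~\ref{l46}) to show each positive-eigenvalue coefficient decays like $e^{-\lambda_i t}$, invoke Lemma~\ref{ker} to kill the $\ker(\mathcal S)$ component (since $r_t=\mathcal A(Q_t-u)$ with $Q_t-u\in\mathcal H^2_{(0)}$), and conclude by dominated convergence for the series $\sum_i\rho_i(0)^2 e^{-2\lambda_i t}$. Your write-up is slightly more explicit about the semigroup representation and the dominated-convergence justification, but the argument is the same.
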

This result establishes global convergence of the (wide-limit) training algorithm since the objective function is the PDE residual.  
However, the convergence of the PDE residual to zero does not necessarily guarantee that $\| Q_t - u \|_{L^2(\mu)}$ as $t \to \infty$ where $u$ is the solution of the PDE \eqref{pde1}. We show that, under a mild additional assumption on the PDE operator $\mathcal{A}$, we can guarantee that $Q_t$ converges to the solution of the PDE $u$.
\begin{assumption}[Bounded inverse of $\mathcal{A}$]
    The inverse of $\mathcal{A}$ is a bounded operator on $L^2 \to L^2$. That is, there exists a constant $k>0$, such that for any $g \in L^2$, the PDE \eqref{pde1} has a unique solution $u\in \mathcal{H}^2_{(0)}$ satisfying \[ \|u\|_2 \leq k \|g\|_{L^2(\mu)}. \]
    \end{assumption}
\begin{remark}
    Second-order uniformly elliptic PDEs naturally have this property. For reference, see Theorem 6, Chapter 6 of \cite{evans2022partial}.
\end{remark}
    \begin{theorem}[Convergence of $Q_t$ under DGM] \label{l62}
        If $\mathcal{A}$ has a bounded inverse, $Q_t$ converges to the  solution $u$ of the PDE: \[
        \lim_{t \to \infty} \|Q_t-u\|_{L^2(\mu)}=0.
        \]
    \end{theorem}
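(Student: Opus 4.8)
\textit{Proof proposal.} The plan is to upgrade the residual convergence of Theorem~\ref{l51} to convergence of the approximator by a single quantitative estimate, using the bounded-inverse hypothesis. First note that the hypothesis that $\mathcal{A}$ has a bounded inverse in particular guarantees that the PDE~\eqref{pde1} admits a (unique) solution $u\in\mathcal{H}^2_{(0)}$, so Theorem~\ref{l51} applies and gives $\|\mathcal{A}Q_t-g\|_{L^2(\mu)}\to 0$ as $t\to\infty$. Next, by Lemma~\ref{l36} the wide-limit trajectory $Q_t$ lies in $\mathcal{H}^2_{(0)}$ for every $t\ge 0$, and $u\in\mathcal{H}^2_{(0)}$ as well, so $Q_t-u\in\mathcal{H}^2_{(0)}$; applying the linear operator $\mathcal{A}$ and using $\mathcal{A}u=g$ gives $\mathcal{A}(Q_t-u)=\mathcal{A}Q_t-g$, which is an element of $L^2(\mu)$ since $\mathcal{A}$ maps $\mathcal{H}^2_{(0)}$ boundedly into $L^2$ (the same mapping property used to define $\mathcal{S}=\mathcal{A}\,\mathcal{U}$) and $g\in L^2$.

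The key step is then to apply the bounded-inverse assumption with right-hand side $\mathcal{A}Q_t-g\in L^2(\mu)$: the boundary value problem $\mathcal{A}w=\mathcal{A}Q_t-g$ in $\Omega$, $w=0$ on $\partial\Omega$, has a \emph{unique} solution in $\mathcal{H}^2_{(0)}$, and by the computation of the previous paragraph that solution is precisely $w=Q_t-u$. The quantitative part of the assumption therefore yields, for a constant $k$ independent of $t$,
\[
\|Q_t-u\|_{\mathcal{H}^2}\;\le\;k\,\|\mathcal{A}Q_t-g\|_{L^2(\mu)}.
\]
Since $\|\cdot\|_{L^2(\mu)}$ is dominated by $\|\cdot\|_{\mathcal{H}^2}$ (the former appears as one of the summands defining the latter), we get $\|Q_t-u\|_{L^2(\mu)}\le k\,\|\mathcal{A}Q_t-g\|_{L^2(\mu)}$. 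Letting $t\to\infty$ and invoking Theorem~\ref{l51} makes the right-hand side vanish, which is the claim.

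I do not expect a genuine obstacle in this argument; the only points needing care are (i) that $Q_t-u$ really is the solution singled out by the bounded-inverse assumption, which is where uniqueness of the $\mathcal{H}^2_{(0)}$ solution is used, and (ii) that $\mathcal{A}Q_t-g$ is an admissible right-hand side, i.e.\ lies in $L^2(\mu)$, which follows from $Q_t\in\mathcal{H}^2_{(0)}$ together with the regularity of the limit ODE established in Lemma~\ref{l36}. In effect Theorem~\ref{l62} is a short corollary of Theorem~\ref{l51}: the bounded inverse converts control of the PDE residual into control of the error of the approximator itself.
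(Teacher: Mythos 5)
Your argument is essentially the paper's: observe that boundedness of $\mathcal{A}^{-1}$ on $L^2$ gives $\|Q_t-u\|_{L^2(\mu)}\le k\|\mathcal{A}(Q_t-u)\|_{L^2(\mu)}=k\|\mathcal{A}Q_t-g\|_{L^2(\mu)}$, and then invoke Theorem~\ref{l51}; your additional remarks about existence/uniqueness and $Q_t-u\in\mathcal{H}^2_{(0)}$ just make explicit what the paper leaves implicit. One small overstatement: the bounded-inverse assumption as written controls only the $L^2$ norm of the solution ($\|u\|_2\le k\|g\|_{L^2(\mu)}$), not the $\mathcal{H}^2$ norm, so the intermediate claim $\|Q_t-u\|_{\mathcal{H}^2}\le k\|\mathcal{A}Q_t-g\|_{L^2(\mu)}$ is not justified by the stated hypothesis; this is harmless here since you immediately pass to the $L^2$ norm, which is all the assumption actually provides and all the theorem requires.
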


\section{Global Convergence of the PINN Algorithm} \label{sec7}
In many real-world physics and engineering applications, the PDE solution can be observed at a sparse set of points in the interior of the domain $\Omega$. The PINN algorithm trains a neural network model to predict the solution $u(x)$ for all $x \in \Omega$ by minimizing (via gradient descent) both the PDE residual sampled by $\mu$ on a random set of points in each epoch as well as the distance between the neural network and the observations at the finite set of sparse points. Introducing the measurement data of PDE solutions on these sparse points may help recover the PDE solution globally. 

Here, this setting is slightly different from the original PINNs formulation, as we sample points at random using the measure $\mu$. We discuss this setting because if the PDE residual is minimized only on a fixed sparse set of points, then there is no guarantee that the approximator is learning the solution -- given any function, it is possible to perturb the function locally around each measurement point to construct functions which will have zero residual at these points; such functions will generally have no relation to the solution of the PDE. For similar reasons, we assume that the boundary value of the PDE is perfectly known.

\def\B{
\begin{bmatrix}
    \mathcal{A}f-g \\
    \mathbf{f-u} \\
\end{bmatrix}}
\def\C{
\begin{bmatrix}
    \mathcal{A}Q_t-g \\
    \mathbf{Q_t-u} \\
\end{bmatrix}}
\def\D{
\begin{bmatrix}
\mathcal{S} & \Bar{\mathcal{U}}^* \\
\Bar{\mathcal{U}} & \bar{\mathcal{B}}
\end{bmatrix}
}
\def\E{
\begin{bmatrix}
    \varrho^d \\
    \varrho^p \\
\end{bmatrix}}
\def\F{
\begin{bmatrix}
    (\mathcal{U}f)(x_1) \\
    (\mathcal{U}f)(x_2) \\
    \vdots \\
    (\mathcal{U}f)(x_M)
\end{bmatrix}}
\def\G{
\begin{bmatrix}
    Q_t(x_1)-u(x_1) \\
    Q_t(x_2)-u(x_2)) \\
    \vdots \\
    Q_t(x_M)-u(x_M)
\end{bmatrix}}

Let us denote these extra observation points in $\Omega$ as $\mathbf{x}:=\{x_i\}_{i=1,2,...M}$ where the %corresponding 
observations are $u(x_i)=u_i$ for $1\leq i \leq M$. Define the measure $\mu_{\textbf{x}}=\frac{1}{M}\sum_{i=1}^M \delta_{x_i}$ and the kernel function \begin{align}
B(x,y):=\mathbb{E}_{c,w,b}\bigg[\nabla_{c,w,b}[c\eta(x)\sigma_{w,b}(x)]\cdot \nabla_{c,w,b}[c\eta(y)\sigma_{w,b}(y)]\bigg],
\end{align}
and the corresponding `integral' operator $\mathcal{B}:L^2(\mu_{\mathbf{x}})\equiv \mathbb{R}^M\to L^2(\mu)$ by \begin{align}
    \mathcal{B}\mathbf{v}(y):
    =\frac{1}{M}\sum_{i=1}^M v_i B(x_i,y).
\end{align}
For any operator $\mathcal{C}$, we define $\bar{\mathcal{C}}$ to be the evaluation of $\mathcal{C}$ on each of our training points, 
\[\bar{\mathcal{C}}v = \big[\mathcal{C}\mathbf{v}(x_1), \cdots, \mathcal{C}\mathbf{v}(x_M)\big]^\top \in \mathbb{R}^M,\]
in particular
\begin{align}\label{eqBdefn}
    \bar{\mathcal{B}}\mathbf{v}:=\Big[\frac{1}{M}\sum_{i=1}^M v_i B(x_i,x_1), \cdots ,\frac{1}{M}\sum_{i=1}^M v_i B(x_i,x_M)\Big]^\top,
\end{align}
and for generic $g\in L^2(\mu)$ and $\mathbf{v}\in \mathbb{R}^M$, direct calculation shows that 
\[\begin{split}    \langle\bar{\mathcal{U}}g,\mathbf{v}\rangle_{L^2(\mu_\mathbf{x})} &=\frac{1}{M}\sum_{i}(\mathcal{U}g)(x_i)v_i\\
&=\frac{1}{M}\sum_i\int \mathbb{E}_{c,w,b}\bigg[v_ig(x)\nabla_{c,w,b}\mathcal{A}[c\eta(x)\sigma_{w,b}(x)]\cdot \nabla_{c,w,b}[c\eta(x_i)\sigma_{w,b}(x_i)]\bigg]\mu(dx)\\
&=  \langle g, \mathcal{A}\mathcal{B}\mathbf{v}\rangle_{L^2(\mu)}, 
\end{split}\]
in other words, $\bar{\mathcal{U}}:L^2(\mu)\to \mathbb{R}^M$ is the adjoint of $\mathcal{A}\mathcal{B}:\mathbb{R}^M\to L^2(\mu)$, so we write $\mathcal{A}\mathcal{B}=\bar{\mathcal{U}}^*$.

The PINN objective function is \begin{align}
\begin{split}
    J(\theta^N)&= \|\mathcal{A}Q_t^N -g\|^2_{L^2(\mu)}+\|Q_t^N -u\|_{L^2(\mu_\mathbf{x})}^2\\&= \int_\Omega (\mathcal{A}Q_t^N-g)^2 d\mu + \int_\Omega (Q^N_t-u)^2 d\mu_{\mathbf{x}}.
\end{split}
\end{align}
Informally, the wide-limit ODE (as $N \rightarrow \infty$) for the neural network trained with continuous-time gradient descent is:
\begin{align}
    \frac{d Q_t}{dt}=-\mathcal{U}[\mathcal{A}Q_t-g]- \mathcal{B}[Q_t-u].
\end{align}
Applying the operator $\mathcal{A}$ and recalling  $\mathcal{A}\mathcal{B}$ is the adjoint of $\bar{\mathcal{U}}:L^2(\mu)\to \mathbb{R}^M$, we derive that 
 \begin{align} \label{pinns}
     \frac{d}{dt}\C=-\D \C,
 \end{align}
where, on the left-hand side of the equation, the first term $\mathcal{A}Q_t-g \in L^2(\mu)$ and the second term $\mathbf{Q_t-u} \in \mathbb{R}^M$ is \begin{align}
    \mathbf{Q_t-u}=\G.
\end{align}

In light of this, we define the operator $\mathcal{V}: L^2(\mu)\times \mathbb{R}^M \to L^2(\mu)\times \mathbb{R}^M$
\begin{align}
    \mathcal{V}f:=\D f.
\end{align}
Given the properties of $\mathcal{S}$ (Lemma \ref{lemma25}), it is easy to verify that $\mathcal{V}$ is Hilbert--Schmidt, self-adjoint and positive semi-definite. It will not generally be strictly positive definite. However, using the representation properties of neural networks, we can prove that $ (\mathcal{A}Q_t-g,  \mathbf{Q_t-u} )^{\top}$ in \eqref{pinns} has zero projection on the kernel set of $\mathcal{V}$.
\begin{lemma}[Projection on $\mathrm{ker}(\mathcal{V})$] \label{kerV}
    For $h\in L^2(\mu)\times \mathbb{R}^M$, if $\mathcal{V}h=0$ then $\langle h, \mathcal{V}m\rangle=0$ for any $m=[\mathcal{A}f-g, f(x_1)-u(x_1),\cdots, f(x_M)- u(x_M)]^\top$ where $f \in \mathcal{H}^2_{(0)}$.
\end{lemma}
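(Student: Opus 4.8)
The plan is to adapt the proof of Lemma~\ref{ker} to the block operator $\mathcal{V}$, carrying the extra $\mathbb{R}^{M}$ coordinate along. Note first that the stated conclusion is the weak one: since $\mathcal{V}$ is self-adjoint, $\mathcal{V}h=0$ already gives $\langle h,\mathcal{V}m\rangle=\langle\mathcal{V}h,m\rangle=0$ for free. What the subsequent convergence argument really needs (namely, that the residual $(\mathcal{A}Q_{t}-g,\,Q_{t}-u)^{\top}$ has zero projection onto $\ker\mathcal{V}$) is the stronger statement $\langle h,m\rangle=0$, and that is what we will establish. Write $\phi_{\mathcal{A}}(x;c,w,b):=\nabla_{c,w,b}\mathcal{A}[\eta(x)c\,\sigma_{w,b}(x)]$ and $\phi(y;c,w,b):=\nabla_{c,w,b}[\eta(y)c\,\sigma_{w,b}(y)]$ (both $\mathbb{R}^{n+2}$-valued), so that $S(x,y)=\mathbb{E}[\phi_{\mathcal{A}}(x)\cdot\phi_{\mathcal{A}}(y)]$, $U(x,y)=\mathbb{E}[\phi_{\mathcal{A}}(x)\cdot\phi(y)]$, $B(x,y)=\mathbb{E}[\phi(x)\cdot\phi(y)]$; since $\sigma\in C_{b}^{4}$, $\mathcal{A}$ commutes with $\nabla_{c,w,b}$ (the fact already used for $S(x,\cdot)=\mathcal{A}U(x,\cdot)$ and for $\mathcal{A}\mathcal{B}=\bar{\mathcal{U}}^{*}$), so $\mathcal{A}_{y}\phi(y;c,w,b)=\phi_{\mathcal{A}}(y;c,w,b)$.

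\textbf{Step 1 (quadratic form).} Expanding the four blocks of $\mathcal{V}$ with the kernel identities above and using $\mathcal{A}_{y}\phi=\phi_{\mathcal{A}}$ together with $\mathcal{A}\mathcal{B}=\bar{\mathcal{U}}^{*}$ yields, for every $h=(h_{1},h_{2})\in L^{2}(\mu)\times\mathbb{R}^{M}$,
\[
\langle h,\mathcal{V}h\rangle=\mathbb{E}_{c,w,b}\bigg[\Big\|\int_{\Omega}h_{1}(x)\,\phi_{\mathcal{A}}(x;c,w,b)\,d\mu(x)+\frac{1}{M}\sum_{i=1}^{M}(h_{2})_{i}\,\phi(x_{i};c,w,b)\Big\|^{2}\bigg]\ge 0,
\]
which is the "easy to verify" positive semi-definiteness of $\mathcal{V}$. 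If $\mathcal{V}h=0$, then $\langle h,\mathcal{V}h\rangle=0$, so the $\mathbb{R}^{n+2}$-valued random vector inside the norm is zero for a.e.\ $(c,w,b)$. Its $\partial_{c}$-component does not involve $c$; using continuity in $(w,b)$ and the full-support condition of Assumption~\ref{initialization}, we obtain for \emph{every} $(w,b)\in\mathbb{R}^{n+1}$
\[
\int_{\Omega}h_{1}(x)\,\mathcal{A}[\eta\,\sigma_{w,b}](x)\,d\mu(x)+\frac{1}{M}\sum_{i=1}^{M}(h_{2})_{i}\,\eta(x_{i})\,\sigma_{w,b}(x_{i})=0,
\]
hence, by linearity, the same identity with $\sigma_{w,b}$ replaced by any $q$ in $\mathcal{P}:=\mathrm{span}\{\sigma_{w,b}:(w,b)\in\mathbb{R}^{n+1}\}$. (The $\partial_{w},\partial_{b}$-components give analogous identities involving $\sigma_{w,b}'$, available if a stronger approximation mode is needed below.)

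\textbf{Step 2 (density).} Fix $f\in\mathcal{H}^{2}_{(0)}$ and set $\tilde f:=f-u$, where $u\in\mathcal{H}^{2}_{(0)}$ is the PDE solution matching the data (the standing assumption of this section), so $g=\mathcal{A}u$, $u(x_{i})=u_{i}$, and $\mathcal{A}f-g=\mathcal{A}\tilde f$, $f(x_{i})-u(x_{i})=\tilde f(x_{i})$; thus $\langle h,m\rangle=\langle h_{1},\mathcal{A}\tilde f\rangle_{L^{2}(\mu)}+\frac{1}{M}\sum_{i}(h_{2})_{i}\tilde f(x_{i})$, and $\tilde f$ ranges over $\mathcal{H}^{2}_{(0)}$ as $f$ does. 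The functional $L(q):=\langle h_{1},\mathcal{A}[\eta q]\rangle_{L^{2}(\mu)}+\frac{1}{M}\sum_{i}(h_{2})_{i}\eta(x_{i})q(x_{i})$ vanishes on $\mathcal{P}$ by Step~1, and is continuous in the $\mathcal{H}^{2}$-norm of $q$ together with the finitely many values $q(x_{1}),\dots,q(x_{M})$, since $|\langle h_{1},\mathcal{A}[\eta q]\rangle|\le k\|h_{1}\|_{L^{2}}\|q\|_{\mathcal{H}^{2}}$ by Assumption~\ref{assume_1} (boundedness of $\mathcal{A}:\mathcal{H}^{2}\to L^{2}(\mu)$ and of multiplication by $\eta\in C^{3}_{b}$) and $|\frac{1}{M}\sum_{i}(h_{2})_{i}\eta(x_{i})q(x_{i})|\le C\max_{i}|q(x_{i})|$. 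Appealing to the neural-network density result behind Lemma~\ref{ker} (universal approximation in the Sobolev norm, using Assumption~\ref{assume_activation} and the full support in Assumption~\ref{initialization}, combined with the regularity of $\eta$ from Assumption~\ref{a2} to pass between $\eta\mathcal{P}$ and $\mathcal{H}^{2}_{(0)}$), choose $q_{n}\in\mathcal{P}$ with $\eta q_{n}\to\tilde f$ in $\mathcal{H}^{2}$ and $(\eta q_{n})(x_{i})\to\tilde f(x_{i})$ for each $i$. Then
\[
0=L(q_{n})\longrightarrow\langle h_{1},\mathcal{A}\tilde f\rangle_{L^{2}(\mu)}+\frac{1}{M}\sum_{i=1}^{M}(h_{2})_{i}\,\tilde f(x_{i})=\langle h,m\rangle,
\]
so $\langle h,m\rangle=0$, and a fortiori $\langle h,\mathcal{V}m\rangle=0$.

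\textbf{Expected main obstacle.} The crux is the approximation in Step~2: $\tilde f\in\mathcal{H}^{2}_{(0)}$ must be approximated by $\eta\times(\text{neural combination})$ in a topology that is both strong enough for the second-order operator $\mathcal{A}$ to survive the limit (i.e.\ $\mathcal{H}^{2}$) and strong enough to control the point evaluations at $x_{1},\dots,x_{M}$. The $\mathcal{H}^{2}$-density is exactly the ingredient used for the DGM Lemma~\ref{ker}, and it is delicate: $\tilde f/\eta$ need not itself lie in $\mathcal{H}^{2}$ (its boundary behaviour is only what $\tilde f\in\mathcal{H}^{1}_{0}$ permits), so one must argue density of $\eta\mathcal{P}$ in $\mathcal{H}^{2}_{(0)}$ directly, exploiting that $\nabla\eta$ does not vanish on $\partial\Omega$ (Assumption~\ref{a2}), or else restrict to the continuous functions actually arising in the application ($\tilde f=Q_{t}-u$, with $Q_{t}$ smooth and $u$ smooth by interior elliptic regularity), for which point evaluation is automatically continuous. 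The remaining ingredients --- the quadratic-form expansion of $\langle h,\mathcal{V}h\rangle$, the commutation $\mathcal{A}\nabla_{\theta}=\nabla_{\theta}\mathcal{A}$, the relation $\mathcal{A}\mathcal{B}=\bar{\mathcal{U}}^{*}$, and the passage from ``a.e.\ $(w,b)$'' to ``all $(w,b)$'' via full support --- are routine.
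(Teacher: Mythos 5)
Your proof follows essentially the same route as the paper's: derive the non-negative quadratic form $\langle h,\mathcal{V}h\rangle=\mathbb{E}_{c,w,b}\bigl[\|\int_\Omega h_1\phi_{\mathcal{A}}\,d\mu+\tfrac{1}{M}\sum_i(h_2)_i\phi(x_i)\|^2\bigr]$, conclude that the integrand vanishes for a.e.\ (hence, by full support and continuity, all) $(w,b)$, then push the identity from the neural-network span to general $f\in\mathcal{H}^2_{(0)}$ by density. The paper keeps only the $\nabla_c$ component and works with an inequality rather than your full equality, which makes no difference here.

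Two observations you make go beyond what the paper records and are worth flagging. First, you are right that the literal conclusion $\langle h,\mathcal{V}m\rangle=0$ is an immediate consequence of self-adjointness; what the paper's proof actually establishes (its equation (55), $\langle\varrho,[\mathcal{A}f-g,\mathbf{f-u}]^\top\rangle=0$) and what Theorem~\ref{l72} later needs is the stronger $\langle h,m\rangle=0$ --- the printed statement appears to contain a typo, with an extraneous $\mathcal{V}$. Second, your caution about the density step is legitimate and is precisely the thin part of the paper's argument: the paper asserts (its display just after the universal approximation invocation) that one can find a neural-network sequence converging to $f-u$ \emph{simultaneously} in $\mathcal{H}^2(\mu)$ and in $L^2(\mu_{\mathbf{x}})$, i.e.\ pointwise at $x_1,\dots,x_M$. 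Since $\mathcal{H}^2\hookrightarrow C^0$ only for $d\le 3$, this is not automatic from $\mathcal{H}^2$-density alone in higher dimension; one needs either the Sobolev embedding, or the extra regularity of the functions actually occurring in the application ($Q_t-u$ is smooth in the interior), or an explicit construction of the approximating sequence that controls the finitely many point values. You correctly identify this as the genuine obstacle and offer the same workarounds one would reach for; making one of them precise would close the only non-routine gap shared by both your proof and the paper's.
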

\begin{theorem}[Global convergence of PINNs objective] \label{l72}
    Assume that the PDE \eqref{pde1} admits a solution. The objective function $\|\mathcal{A}Q_t-g\|^2_{L^2(\mu)}+\|Q_t-u\|_{L^2(\mu_\mathbf{x})}^2$ then converges to zero: \begin{align}
        \lim_{t \to \infty} \Big(\|\mathcal{A}Q_t-g\|^2_{L^2(\mu)}+\|Q_t-u\|_{L^2(\mu_\mathbf{x})}^2 \Big) =0.
    \end{align}
\end{theorem}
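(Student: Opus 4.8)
The plan is to mimic the structure of the $t\to\infty$ convergence argument for DGM (Theorem~\ref{l51}), now applied to the coupled system \eqref{pinns} governed by the operator $\mathcal{V}$. Write $R_t := (\mathcal{A}Q_t - g,\ \mathbf{Q_t - u})^\top \in L^2(\mu)\times\mathbb{R}^M$, so that \eqref{pinns} reads $\frac{d}{dt}R_t = -\mathcal{V}R_t$, and the objective is exactly $\|R_t\|^2$ in the product Hilbert space. First I would invoke that $\mathcal{V}$ is Hilbert--Schmidt, self-adjoint, and positive semi-definite (already noted after the definition of $\mathcal{V}$), so it admits a spectral decomposition with an orthonormal basis $\{\varepsilon_i\}\cup\{\nu_j\}$ of eigenvectors with eigenvalues $\lambda_i>0$ and $\mathcal{V}\nu_j=0$ respectively — this is the analogue of Lemma~\ref{l46}. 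Projecting $R_t$ onto this basis, $\langle R_t,\varepsilon_i\rangle = e^{-\lambda_i t}\langle R_0,\varepsilon_i\rangle \to 0$ for each $i$, and the kernel components $\langle R_t,\nu_j\rangle$ are constant in $t$.

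The crucial input is Lemma~\ref{kerV}: since $Q_t\in\mathcal{H}^2_{(0)}$ and a solution $u\in\mathcal{H}^2_{(0)}$ exists, the vector $R_t$ has exactly the form $m=[\mathcal{A}f-g,\ f(x_1)-u(x_1),\dots,f(x_M)-u(x_M)]^\top$ with $f=Q_t$ (here I use that $\mathcal{A}u=g$ and $u(x_i)=u_i$, so $\mathcal{A}f-g=\mathcal{A}(f-u)$ and $f(x_i)-u(x_i)=(f-u)(x_i)$, and $f-u\in\mathcal{H}^2_{(0)}$). Hence by Lemma~\ref{kerV}, for any $h$ with $\mathcal{V}h=0$ we have $\langle h, R_t\rangle = \langle h,\mathcal{V}m\rangle=0$ — wait, more carefully: Lemma~\ref{kerV} gives $\langle h,\mathcal{V}m\rangle=0$; I need $\langle h,R_t\rangle=0$ directly. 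Since $R_0 = (-g,\ -\mathbf{u})^\top$ equals $m$ with $f=0\in\mathcal{H}^2_{(0)}$, and then $R_t$ solves the linear ODE, I would argue that $\langle \nu_j, R_t\rangle$ stays zero: indeed $\frac{d}{dt}\langle\nu_j,R_t\rangle = -\langle\nu_j,\mathcal{V}R_t\rangle = -\langle\mathcal{V}\nu_j,R_t\rangle=0$, so it equals $\langle\nu_j,R_0\rangle$, and $R_0=m|_{f=0}$ with $f=0\in\mathcal{H}^2_{(0)}$, so Lemma~\ref{kerV} applied with $h=\nu_j$ (noting $\mathcal{V}\nu_j=0$) gives $\langle\nu_j,\mathcal{V}(m|_{f=0})\rangle=0$; I must reconcile this with wanting $\langle\nu_j, R_0\rangle=0$ rather than $\langle\nu_j,\mathcal{V}R_0\rangle=0$. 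The cleanest route is to observe that $R_t - R_0 = -\int_0^t \mathcal{V}R_s\,ds \in \overline{\mathrm{range}(\mathcal{V})} = \mathrm{ker}(\mathcal{V})^\perp$, while the statement of Lemma~\ref{kerV} is really asserting $R_0 \perp \mathrm{ker}(\mathcal{V})$ as well (its content, read with $f=0$, is that $m|_{f=0}=R_0$ lies in $\mathrm{ker}(\mathcal{V})^\perp$, since $\langle h, R_0\rangle = \langle h,\mathcal{V}m'\rangle$ type manipulations using that $\mathcal{A}B=\bar{\mathcal{U}}^*$ identify the pairing). I would state this as: $R_t \in \mathrm{ker}(\mathcal{V})^\perp$ for all $t\ge 0$, hence $\langle\nu_j, R_t\rangle=0$ for all $j$.

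Granting $R_t\in\mathrm{ker}(\mathcal{V})^\perp = \overline{\mathrm{span}}\{\varepsilon_i\}$, I expand $\|R_t\|^2 = \sum_i e^{-2\lambda_i t}|\langle R_0,\varepsilon_i\rangle|^2$. Each term tends to $0$ and is dominated by the summable sequence $|\langle R_0,\varepsilon_i\rangle|^2$ (with $\sum_i|\langle R_0,\varepsilon_i\rangle|^2 \le \|R_0\|^2 < \infty$), so dominated convergence for series gives $\|R_t\|^2\to 0$, which is precisely the claimed convergence of the PINN objective. The main obstacle is the second paragraph: making rigorous that the PINN residual $R_t$ stays in the orthogonal complement of $\mathrm{ker}(\mathcal{V})$ for all $t$, i.e. correctly extracting from Lemma~\ref{kerV} that $R_0\perp\mathrm{ker}(\mathcal{V})$ (using the adjoint identity $\mathcal{A}\mathcal{B}=\bar{\mathcal{U}}^*$ to rewrite the pairing $\langle h, R_0\rangle$ in the form appearing in Lemma~\ref{kerV}) and then propagating this via the flow; the spectral/dominated-convergence endgame is then routine and mirrors Theorem~\ref{l51}.
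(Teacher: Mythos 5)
Your proof follows the same strategy as the paper's: project $R_t = (\mathcal{A}Q_t-g,\ \mathbf{Q_t-u})^\top$ onto the spectral basis of $\mathcal{V}$, use Lemma~\ref{kerV} to show the kernel components vanish (so $R_t\in\mathrm{ker}(\mathcal{V})^\perp$), and then apply dominated convergence to the exponentially decaying series $\sum_i e^{-2\lambda_i t}|\langle R_0,\varepsilon_i\rangle|^2$. You are also right to be uneasy about the literal statement of Lemma~\ref{kerV}: since $\mathcal{V}$ is self-adjoint, $\langle h,\mathcal{V}m\rangle=\langle\mathcal{V}h,m\rangle=0$ holds trivially whenever $\mathcal{V}h=0$, and what the paper's \emph{proof} of that lemma actually establishes -- and what both your argument and the paper's proof of Theorem~\ref{l72} rely on -- is the nontrivial conclusion $\langle h,m\rangle=0$, i.e., that vectors of the form $m=[\mathcal{A}f-g,\,f(x_1)-u(x_1),\dots,f(x_M)-u(x_M)]^\top$ with $f\in\mathcal{H}^2_{(0)}$ are orthogonal to $\mathrm{ker}(\mathcal{V})$.
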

\begin{theorem}[Global convergence of PINNs] \label{l73}
    If $\mathcal{A}$ has a bounded inverse, then $Q_t$ converges to the solution of of the PDE, 
    \[
            \lim_{t \to \infty} \Big( \|Q_t-u\|^2_{L^2(\mu)}+\|Q_t-u\|_{L^2(\mu_\mathbf{x})}^2 \Big) =0.
            \]
\end{theorem}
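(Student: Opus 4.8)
The plan is to obtain Theorem~\ref{l73} as essentially a corollary of the objective-function convergence in Theorem~\ref{l72}, combined with the bounded-inverse assumption, mirroring the way Theorem~\ref{l62} is deduced from Theorem~\ref{l51}. Theorem~\ref{l72} already gives $\|\mathcal{A}Q_t-g\|^2_{L^2(\mu)}+\|Q_t-u\|^2_{L^2(\mu_\mathbf{x})}\to 0$, so both summands vanish individually: $\|Q_t-u\|^2_{L^2(\mu_\mathbf{x})}\to 0$ is immediate, and the only real task is to upgrade $\|\mathcal{A}Q_t-g\|_{L^2(\mu)}\to 0$ into $\|Q_t-u\|_{L^2(\mu)}\to 0$.

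First I would record that $Q_t\in\mathcal{H}^2_{(0)}$ along the entire trajectory of the PINN limit ODE $\frac{dQ_t}{dt}=-\mathcal{U}[\mathcal{A}Q_t-g]-\mathcal{B}[Q_t-u]$; this is the PINN analogue of Lemma~\ref{l36} and follows from the same argument, since both $\mathcal{U}$ and $\mathcal{B}$ map into $\mathcal{H}^2_{(0)}$ (because $U(x,\cdot)$ and $B(x,\cdot)$ vanish on $\partial\Omega$ and are $C^2_b$ in the second argument) and the right-hand side is Lipschitz in $\mathcal{H}^2_{(0)}$. Since $u\in\mathcal{H}^2_{(0)}$ by hypothesis, the difference $w_t:=Q_t-u$ lies in $\mathcal{H}^2_{(0)}$ and solves the boundary-value problem $\mathcal{A}w_t=\mathcal{A}Q_t-\mathcal{A}u=\mathcal{A}Q_t-g$ in $\Omega$ with $w_t=0$ on $\partial\Omega$.

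Next I would apply the bounded-inverse assumption with right-hand side $\mathcal{A}Q_t-g\in L^2$: it supplies a unique element of $\mathcal{H}^2_{(0)}$ solving exactly this boundary-value problem, which by uniqueness must be $w_t$, together with the estimate $\|w_t\|_{\mathcal{H}^2}=\|w_t\|_2\le k\,\|\mathcal{A}Q_t-g\|_{L^2(\mu)}$. Since the $L^2(\mu)$ norm is dominated by the $\mathcal{H}^2$ norm, this gives $\|Q_t-u\|_{L^2(\mu)}\le \|w_t\|_{\mathcal{H}^2}\le k\,\|\mathcal{A}Q_t-g\|_{L^2(\mu)}\to 0$ as $t\to\infty$ by Theorem~\ref{l72}. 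Adding this to $\|Q_t-u\|^2_{L^2(\mu_\mathbf{x})}\to 0$ yields the stated conclusion.

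I do not expect a genuine obstacle here: the statement is a short consequence of Theorem~\ref{l72}, with the heavy lifting (driving the PINN objective to zero despite the absence of a spectral gap, via Lemma~\ref{kerV}) already done. The only points requiring care are the well-posedness of the PINN limit ODE in $\mathcal{H}^2_{(0)}$, which guarantees that $Q_t-u$ is an admissible argument for the bounded-inverse assumption, and the identification of $Q_t-u$ with the solution furnished by that assumption through uniqueness; both are routine given the mapping properties of $\mathcal{U}$ and $\mathcal{B}$.
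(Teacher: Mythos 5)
Your proof is correct and follows essentially the same route as the paper: apply Theorem~\ref{l72} to make both terms of the PINN objective vanish, then use the bounded-inverse assumption exactly as in the proof of Theorem~\ref{l62} to convert $\|\mathcal{A}Q_t-g\|_{L^2(\mu)}\to 0$ into $\|Q_t-u\|_{L^2(\mu)}\to 0$. The extra care you take---verifying that the PINN limit trajectory stays in $\mathcal{H}^2_{(0)}$ so that $Q_t-u$ is an admissible argument for $\mathcal{A}^{-1}$, and identifying $Q_t-u$ with the solution provided by the bounded-inverse assumption via uniqueness---is implicit in the paper's two-line proof but is a sound and welcome addition.
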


\section{Conclusion}
In this paper, we develop a convergence theory for neural network approximators of PDEs trained with gradient descent (such as DGM and PINNs). It is proven that a neural network trained to minimize the PDE residual will converge to an  infinite-dimensional linear ODE as the number of hidden units $\rightarrow \infty$. The limit ODE's dynamics are characterized by a novel kernel function involving the PDE operator and the neural network activation function. The kernel lacks a spectral gap, making the analysis of the limit ODE challenging. Using an eigendecomposition approach, we are able to prove that the PDE residual of the limit neural network converges to zero. Furthermore, under mild assumptions, the limit neural network converges to the solution of the PDE.

\newpage
\appendix
\section{Appendix} \label{App}
\subsection{Proof of Lemma \ref{l35}}
\begin{proof}
    Notice that in \eqref{9}, three terms $\alpha^N$, $\psi^N(\mathcal{A}Q^N-g)$ and $\Phi^N(\nabla_\theta \mathcal{A}Q^N)$ are bounded by $N^{2\beta-1}$, $N^\delta$ and $N^{\epsilon-\beta}$ respectively.
\end{proof}
\subsection{Proof of Lemma \ref{l36}}
\begin{proof}
    By Assumptions \ref{assume_1}, \ref{assume_activation} and \ref{initialization}, the operator $\mathcal{U}$ is Lipschitz in $\mathcal{H}^2$ norm. Therefore it admits a unique solution in $\mathcal{H}^2$. It is easy to verify that, as its initial condition is in $\mathcal{H}^2_{(0)}$, and $\mathcal{U}$ has codomain $\mathcal{H}^2_{(0)}$, the solution lives in the Hilbert subspace $\mathcal{H}^2_{(0)}$.
\end{proof}
\subsection{Proof of Theorem \ref{wide}}
\begin{proof}
The main idea of the proof is the split the difference into multiple residual terms and provide a bound for each of them. Then we apply Gr\"onwall's inequality to provide an estimate of the difference between $Q^N$ and $Q$. In this proof, constant $C$ may vary from line to line, but it remains invariant of $N$ and $t$.

To simplify our notations, we set $\mathcal{L}Q:=\mathcal{A}Q-g$. In the integral form, we have \begin{align} \label{qtn}
        Q_t^N(y)=Q_0^N(y)-\int_0^t \int_\Omega \psi^N(\mathcal{L}Q^N_t(x))\alpha^N \Phi^N(\nabla_\theta \mathcal{A}Q^N_s(x))\cdot \nabla_\theta Q_s^N(y) d\mu(x) ds.
    \end{align}
    Similarly, at the same time \begin{align} \label{qt}
        Q_t(y)=Q_0(y)-\int_0^t\int_\Omega \mathcal{L}Q_t(x)U(x,y)d\mu(x) ds.
    \end{align}
    Subtracting \eqref{qt} from \eqref{qtn} and taking partial derivative with respect to $y$ with indices $\alpha$ give \begin{align}
        \begin{split}
            &|D_\alpha(Q_t^N-Q_t)(y)|\\
            &\leq \int_0^t \int_\Omega \bigg|\psi^N(\mathcal{L}Q^N_s(x))\alpha^N \Phi^N(\nabla_\theta \mathcal{A}Q^N_s(x))\cdot D_\alpha \nabla_\theta Q_s^N(y)-\mathcal{L}Q^N_t(x)D_\alpha^y U(x,y)\bigg | d\mu(x)ds\\
            & \quad +|D_\alpha(Q_0^N-Q_0)(y)|\\
            &\leq \int_0^t \int_\Omega \bigg|\psi^N(\mathcal{L}Q^N_s(x))\alpha^N \Phi^N(\nabla_\theta \mathcal{A}Q^N_s(x))\cdot D_\alpha (\nabla_\theta Q_s^N-\nabla_\theta Q_0^N)(y) \bigg|d\mu(x)ds\\
            &\quad + \int_0^t \int_\Omega \bigg|\psi^N(\mathcal{L}Q^N_s(x))\alpha^N (\Phi^N(\nabla_\theta \mathcal{A}Q^N_s(x))-\Phi^N(\nabla_\theta \mathcal{A}Q^N_0(x))\cdot D_\alpha \nabla_\theta Q_0^N(y) \bigg|d\mu(x)ds\\
            &\quad + \int_0^t \int_\Omega \bigg|\psi^N(\mathcal{L}Q^N_s(x))\alpha^N (\Phi^N(\nabla_\theta \mathcal{A}Q^N_0(x))-\nabla_\theta \mathcal{A}Q^N_0(x))\cdot D_\alpha \nabla_\theta Q_0^N(y) \bigg|d\mu(x)ds\\
            &\quad + \int_0^t \int_\Omega \bigg|\psi^N(\mathcal{L}Q^N_s(x)) \big[\alpha^N\nabla_\theta \mathcal{A}Q^N_0(x)\cdot D_\alpha \nabla_\theta Q_0^N(y)-D_\alpha^y U(x,y)\big] \bigg|d\mu(x)ds\\
            &\quad + \int_0^t \int_\Omega \bigg|\big[\psi^N(\mathcal{L}Q^N_s(x))-\psi^N(\mathcal{L}Q_s(x))\big]D_\alpha^y U(x,y)) \bigg|d\mu(x)ds\\
            &\quad + \int_0^t \int_\Omega \bigg|\big[\psi^N(\mathcal{L}Q_s(x))-\mathcal{L}Q_s(x))\big]D_\alpha^y U(x,y)) \bigg|d\mu(x)ds + |D_\alpha(Q_0^N-Q_0)(y)|.
        \end{split}
    \end{align}
    Now by the fact that $|\psi^N|<N^\delta$, $|D_\alpha^y U(x,y)|<C$ we have \begin{align} \label{16}
        \begin{split}
            &|D_\alpha(Q_t^N-Q_t)(y)|\\
            &\leq \int_0^t \int_\Omega N^{2\beta+\delta-1} \bigg|\Phi^N(\nabla_\theta \mathcal{A}Q^N_s(x))\cdot D_\alpha (\nabla_\theta Q_s^N-\nabla_\theta Q_0^N)(y) \bigg|d\mu(x)ds\\
            &\quad + \int_0^t \int_\Omega N^{2\beta+\delta-1} \bigg|(\Phi^N(\nabla_\theta \mathcal{A}Q^N_s(x))-\Phi^N(\nabla_\theta \mathcal{A}Q^N_0(x))\cdot D_\alpha \nabla_\theta Q_0^N(y) \bigg|d\mu(x)ds\\
            &\quad + \int_0^t \int_\Omega N^{2\beta+\delta-1} \bigg|(\Phi^N(\nabla_\theta \mathcal{A}Q^N_0(x))-\nabla_\theta \mathcal{A}Q^N_0(x))\cdot D_\alpha \nabla_\theta Q_0^N(y) \bigg|d\mu(x)ds\\
            &\quad + \int_0^t \int_\Omega N^{\delta} \bigg|N^{2\beta-1}\nabla_\theta \mathcal{A}Q^N_0(x)\cdot D_\alpha \nabla_\theta Q_0^N(y)-D_\alpha^y U(x,y) \bigg|d\mu(x)ds\\
            &\quad + C\int_0^t \int_\Omega \bigg|\psi^N(\mathcal{L}Q^N_s(x))-\psi^N(\mathcal{L}Q_s(x)) \bigg|d\mu(x)ds\\
            &\quad + C\int_0^t \int_\Omega \bigg|\psi^N(\mathcal{L}Q_s(x))-\mathcal{L}Q_s(x)) \bigg|d\mu(x)ds + |D_\alpha(Q_0^N-Q_0)(y)|.
        \end{split}
    \end{align}
    Let us denote $V_t^N(y):=\sum_{|\alpha|\leq 2} |D_\alpha (Q_t^N-Q_t)(y)|$. Then by summing inequality \eqref{16} with respect to all indices and integrating with respect to $\mu(y)$ \begin{align} \label{17}
        \begin{split}
            \int_\Omega V_t^N(y)^2 d\mu(y) \leq  \bigg(\int_\Omega V_t^N(y)^2 d\mu(y)\bigg)^{\frac{1}{2}}\bigg[C\int_0^t \big(\int_\Omega V_s^N(x)^2 d\mu(x)\big)^{\frac{1}{2}}ds + M \bigg]
        \end{split}
    \end{align}
    where $M:=M_1+M_2+M_3+M_4+M_5+M_6$ denotes the residual terms \begin{align}
        \begin{split}
            M_1:&=\int_0^T N^{2\beta+\delta-1} \bigg(\int_{\Omega^2}\big| \Phi^N(\nabla_\theta \mathcal{A}Q^N_s(x))\cdot \sum_\alpha D_\alpha (\nabla_\theta Q_s^N-\nabla_\theta Q_0^N)(y) \big|^2 d\mu(x)d\mu(y)\bigg)^{\frac{1}{2}}ds,\\
            M_2:&=\int_0^T N^{2\beta+\delta-1} \bigg(\int_{\Omega^2}\big|(\Phi^N(\nabla_\theta \mathcal{A}Q^N_s(x))-\Phi^N(\nabla_\theta \mathcal{A}Q^N_0(x)))\cdot \sum_\alpha D_\alpha \nabla_\theta Q_0^N(y) \big|^2 d\mu(x)d\mu(y)\bigg)^{\frac{1}{2}}ds,\\
            M_3:&=\int_0^T N^{2\beta+\delta-1} \bigg(\int_{\Omega^2}\big|(\Phi^N(\nabla_\theta \mathcal{A}Q^N_0(x))-\nabla_\theta \mathcal{A}Q^N_0(x))\cdot \sum_\alpha D_\alpha \nabla_\theta Q_0^N(y) \big|^2 d\mu(x)d\mu(y)\bigg)^{\frac{1}{2}}ds,\\
            M_4:&=\int_0^T N^{\delta} \bigg(\int_{\Omega^2}\big|N^{2\beta-1}\nabla_\theta \mathcal{A}Q^N_0(x)\cdot \sum_\alpha D_\alpha \nabla_\theta Q_0^N(y)-\sum_\alpha D_\alpha^y U(x,y) \big|^2 d\mu(x)d\mu(y)\bigg)^{\frac{1}{2}}ds,\\
            M_5:&=\int_0^T \int_{\Omega}\big|\psi^N(\mathcal{L}Q_s(x))-\mathcal{L}Q_s(x) \big| d\mu(x)ds,\\
            M_6:&=\bigg(\int_\Omega \big[\sum_\alpha |D_\alpha(Q_0^N-Q_0)(y)|\big]^2 d\mu(y)\bigg)^{\frac{1}{2}}.
        \end{split}
    \end{align}
    Then by Gr\"onwall's inequality, since from \eqref{17} we have \begin{align} \label{gronwall}
        \bigg(\int_\Omega V_t^N(y)^2 d\mu(y)\bigg)^{\frac{1}{2}} \leq C \int_0^t \bigg(\int_\Omega V_s^N(x)^2 d\mu(x)\bigg)^{\frac{1}{2}}ds + M,
    \end{align}
    we derive that \begin{align}
        \int_0^t \bigg(\int_\Omega V_s^N(x)^2 d\mu(x)\bigg)^{\frac{1}{2}}ds \leq \frac{M}{C}e^{Ct}.
    \end{align}
    Taking expectation on both sides, by Lemmas \ref{M1}, \ref{M2}, \ref{M3}, \ref{M4}, \ref{M5}, \ref{M6} we can control each of the terms in $M$. Hence \begin{align}
        \lim_{N \to \infty} \mathbb{E}\bigg[\int_0^t \bigg(\int_\Omega V_s^N(x)^2 d\mu(x)\bigg)^{\frac{1}{2}}ds \bigg] =0.
    \end{align}
    By \eqref{gronwall} we have for any $0 \leq s \leq T$ \begin{align}
        \lim_{N \to \infty}\mathbb{E}\bigg[\bigg(\int_\Omega V_s^N(x)^2 d\mu(x)\bigg)^{\frac{1}{2}}\bigg]=0.
    \end{align}
    Notice that $\int_\Omega V_s^N(x)^2 d\mu(x) \geq \|Q_s^N-Q_s\|_{\mathcal{H}^2}$, we conclude that for any $0 \leq s \leq T$ \begin{align}
        \lim_{N \to \infty} \mathbb{E}\bigg[\|Q_s^N-Q_s\|_{\mathcal{H}^2}\bigg]=0.
    \end{align}
\end{proof}
\subsection{Lemmas \ref{M1} to \ref{M6}}
\begin{lemma} \label{M1}
    Residual term $M_1$ satisfies \begin{align}
         \lim_{N \to \infty}\mathbb{E}\bigg[ \int_0^T N^{2\beta+\delta-1} \bigg(\int_{\Omega^2}\big| \Phi^N(\nabla_\theta \mathcal{A}Q^N_s(x))\cdot \sum_\alpha D_\alpha (\nabla_\theta Q_s^N-\nabla_\theta Q_0^N)(y) \big|^2 d\mu(x)d\mu(y)\bigg)^{\frac{1}{2}}ds \bigg] = 0.
    \end{align}
\end{lemma}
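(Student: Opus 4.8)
The plan is to use that, by Definition~\ref{psi} and Assumption~\ref{phi}, every coordinate of the clipped vector $\Phi^N(\nabla_\theta\mathcal{A}Q^N_s(x))$ is bounded by $2N^{\epsilon-\beta}$, uniformly in $x$, $s$ and in the parameters, so that no analysis of $\nabla_\theta\mathcal{A}Q^N$ itself is needed for this term. Writing the parameter gradient in the $N$ blocks $\theta^i=(c^i,w^i,b^i)$ and using the coordinatewise $\ell^\infty$ bound on the clipped factor, the inner product in the integrand is controlled by
\[
\Big|\Phi^N(\nabla_\theta\mathcal{A}Q^N_s(x))\cdot\sum_\alpha D_\alpha(\nabla_\theta Q^N_s-\nabla_\theta Q^N_0)(y)\Big|\le 2N^{\epsilon-\beta}\sum_{i=1}^N\sum_{k}\sum_\alpha\big|D_\alpha\big(\nabla_{\theta^i_k}(Q^N_s-Q^N_0)\big)(y)\big|,
\]
where $k$ runs over the (finitely many) coordinates of a single block; since $|\alpha|\le2$ gives only finitely many terms, the $\sum_\alpha$ is absorbed into generic constants $C$ that are independent of $N$, $s$, $x$, $y$.

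The technical heart is the displacement estimate for the network gradient. Each $D_\alpha\nabla_{\theta^i_k}Q^N(y;\cdot)=N^{-\beta}G_{k,\alpha}(y;c^i,w^i,b^i)$ depends only on block $i$; $G_{k,\alpha}$ is assembled from $\eta$ and its $y$-derivatives of order $\le2$ (bounded, Assumption~\ref{a2}) and from $\sigma^{(j)}(w^iy+b^i)$ with $j\le4$ (bounded, Assumption~\ref{assume_activation}), and because $y\in\Omega$ is bounded, each $y$-differentiation of order $\le2$ and each subsequent $\theta^i$-differentiation produces only bounded polynomial growth in $|w^i|$ (differentiating $\sigma^{(j)}(w^iy+b^i)$ in $w^i$ multiplies by the bounded $y$, never by $w^i$). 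Hence $|\nabla_{\theta^i}G_{k,\alpha}(y;\theta^i)|\le C(1+|c^i|)(1+|w^i|^3)$ uniformly in $y\in\Omega$ and $b^i$. By Lemma~\ref{l35}, for $N$ large enough and $s\le T$ we have $|\theta^i_s-\theta^i_0|\le ksN^{\beta+\delta+\epsilon-1}\le1$, so along the segment $|c^i|\le K_0+1$ (Assumption~\ref{initialization}) and $|w^i|\le|w^i_0|+1$; a mean-value bound then gives
\[
\big|D_\alpha\big(\nabla_{\theta^i_k}(Q^N_s-Q^N_0)\big)(y)\big|\le \frac{C}{N^\beta}(1+|w^i_0|^3)\cdot ksN^{\beta+\delta+\epsilon-1}=Cks(1+|w^i_0|^3)N^{\delta+\epsilon-1},\qquad 0\le s\le T.
\]

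Combining the two estimates, the integrand of $M_1$ is bounded, uniformly in $(x,y)\in\Omega^2$, by $Cks\,N^{2\epsilon+\delta-\beta-1}\sum_{i=1}^N(1+|w^i_0|^3)$, so the $L^2(\mu\otimes\mu)$ norm appearing in $M_1$ is at most $\mu(\Omega)$ times this (using that $\mu$ is finite). Multiplying by the prefactor $N^{2\beta+\delta-1}$, integrating over $s\in[0,T]$, and taking expectations with $\mathbb{E}\big[\sum_{i=1}^N(1+|w^i_0|^3)\big]=N(1+\mathbb{E}|w^1_0|^3)<\infty$ by Assumption~\ref{initialization}, yields
\[
\mathbb{E}[M_1]\le C\,T^2\,\mu(\Omega)\,(1+\mathbb{E}|w^1_0|^3)\,N^{\,\beta+2\delta+2\epsilon-1},
\]
and the exponent $\beta+2\delta+2\epsilon-1=\beta+2(\delta+\epsilon)-1$ is strictly negative precisely because $\delta+\epsilon<\tfrac{1-\beta}{2}$ (Assumption~\ref{phi}); letting $N\to\infty$ gives $\mathbb{E}[M_1]\to0$.

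The main obstacle is the middle step: one must verify that differentiating the network gradient in $y$ (up to order two) and once more in the parameters produces polynomial growth in $|w^i|$ of controlled (here, cubic) degree, so that the third-moment hypothesis on $w^i_0$ in Assumption~\ref{initialization} is exactly what is needed; note that a looser argument, e.g.\ Cauchy--Schwarz over the full $3N$-dimensional parameter vector instead of the coordinatewise $\ell^\infty$ bound on the clipped factor, would square this growth and spuriously demand sixth moments. A minor additional point is to ensure all bounds hold uniformly along the random training trajectory $s\mapsto\theta^i_s$ for $s\le T$, which is handled by the displacement bound of Lemma~\ref{l35} together with $\beta+\delta+\epsilon-1<0$.
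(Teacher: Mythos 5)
Your argument is correct and rests on the same three pillars as the paper's own proof: the coordinatewise bound $\|\Phi^N\|_\infty\le 2N^{\epsilon-\beta}$, the parameter-displacement bound $|\theta^i_s-\theta^i_0|\le Cs N^{\beta+\delta+\epsilon-1}$ from Lemma~\ref{l35} combined with a mean-value estimate for the gradient displacement, and the exponent bookkeeping $\beta+2(\delta+\epsilon)-1<0$ forced by Assumption~\ref{phi}. The one genuine (and useful) difference is the order in which you take the expectation. The paper applies Jensen's inequality to bring $\mathbb{E}$ inside the $L^2(\mu\otimes\mu)$ norm and then estimates $\mathbb{E}\big[|\Phi^N\cdot D_\alpha(\nabla_\theta Q^N_s-\nabla_\theta Q^N_0)|^2\big]$; this quantity involves $\mathbb{E}\big[(\sum_i f(w^i_0,\dots))^2\big]$ and hence, via the variance term, squares the polynomial growth in $w^i_0$, which if pursued literally would call on moments of $w^i_0$ beyond the third moment guaranteed by Assumption~\ref{initialization}. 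You instead establish an almost-sure pointwise bound on the integrand depending linearly on the random sum $\sum_i(1+|w^i_0|^3)$, push the deterministic $\mu\otimes\mu$ and $ds$ integrations through, and only at the very end take $\mathbb{E}$, which then needs exactly $\mathbb{E}|w^1_0|^3<\infty$. This is a cleaner match to the stated hypotheses than the paper's route, at the cost of having to carry the random factor $\sum_i(1+|w^i_0|^3)$ through the estimate. A small remark: the polynomial growth in $|w^i_0|$ is in fact at most quadratic (each $y$-differentiation of order $\le 2$ contributes at most $|w|^2$, and subsequent $\theta^i$-differentiation does not increase that degree), so your cubic bound is conservative but harmless given the third-moment assumption — and consistent with the paper's own (also conservative) claim of a third-order polynomial.
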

\begin{proof}
    By Jensen's inequality, it suffices to prove that \begin{align}
         \int_0^T N^{2\beta+\delta-1} \bigg(\int_{\Omega^2}\mathbb{E}\bigg[\big| \Phi^N(\nabla_\theta \mathcal{A}Q^N_s(x))\cdot \sum_\alpha D_\alpha (\nabla_\theta Q_s^N-\nabla_\theta Q_0^N)(y) \big|^2 \bigg] d\mu(x)d\mu(y)\bigg)^{\frac{1}{2}}ds \to 0.
    \end{align}
    It suffices to show that for any indices $\alpha$ \begin{align} \label{21}
        \int_0^T N^{2\beta+\delta-1} \bigg(\int_{\Omega^2}\mathbb{E}\bigg[\big| \Phi^N(\nabla_\theta \mathcal{A}Q^N_s(x))\cdot D_\alpha (\nabla_\theta Q_s^N-\nabla_\theta Q_0^N)(y) \big|^2 \bigg] d\mu(x)d\mu(y)\bigg)^{\frac{1}{2}}ds \to 0.
    \end{align}
    Notice that $|\Phi^N|$ is bounded elementwise by $N^{\epsilon-\beta}$. We also notice that by the mean value theorem and the fact that $\eta$, $\sigma$, and their derivatives are up to polynomial growth  \begin{align}
    \begin{split}
        \|D_\alpha^y (\nabla_\theta Q_s^N-\nabla_\theta Q_0^N)(y)\|_1\leq \frac{C}{N^{\beta}}\sum_{i=1}^N \bigg( (\|w_s^i-w_0^i\|+\|b_s^i-b_0^i\|+\|c_s^i-c_0^i\|)f(w_0^i,c_0^i,b_0^i,y)\bigg)
    \end{split}
    \end{align}
    where $f$ is a polynomial up to 3rd order. We notice that $\|w_u^i-w_0^i\|+\|b_u^i-b_0^i\|+\|c_u^i-c_0^i\|\leq u C N^{\epsilon+\delta+\beta-1}$. Therefore \begin{align}
        \mathbb{E}\bigg[\big| \Phi^N(\nabla_\theta \mathcal{A}Q^N_s(x))\cdot D_\alpha (\nabla_\theta Q_s^N-\nabla_\theta Q_0^N)(y) \big|^2 \bigg] \leq C N^{2(2\epsilon+\delta-\beta)}.
    \end{align}
    Therefore, the left-hand side of \eqref{21} is smaller than $C N^{2\delta+2\epsilon+\beta-1}$, which goes to 0 as $N \to \infty$.
\end{proof}
\begin{lemma} \label{M2}
    Residual term $M_2$ satisfies \begin{align}
         \lim_{N \to \infty}\mathbb{E}\bigg[ \int_0^T N^{2\beta+\delta-1} \bigg(\int_{\Omega^2}\big|(\Phi^N(\nabla_\theta \mathcal{A}Q^N_s(x))-\Phi^N(\nabla_\theta \mathcal{A}Q^N_0(x)))\cdot \sum_\alpha D_\alpha \nabla_\theta Q_0^N(y) \big|^2 d\mu(x)d\mu(y)\bigg)^{\frac{1}{2}}ds \bigg] = 0.
    \end{align}
\end{lemma}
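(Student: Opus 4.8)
\textbf{Proof proposal for Lemma \ref{M2}.}

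The plan is to estimate the integrand pointwise and then collapse the prefactor $N^{2\beta+\delta-1}$ against the smallness coming from the difference $\Phi^N(\nabla_\theta \mathcal{A}Q^N_s(x))-\Phi^N(\nabla_\theta \mathcal{A}Q^N_0(x))$. First I would apply Jensen's inequality, exactly as in the proof of Lemma \ref{M1}, to move the expectation inside the spatial integral and reduce to showing
\[
\int_0^T N^{2\beta+\delta-1}\Big(\int_{\Omega^2}\mathbb{E}\big[|(\Phi^N(\nabla_\theta \mathcal{A}Q^N_s(x))-\Phi^N(\nabla_\theta \mathcal{A}Q^N_0(x)))\cdot D_\alpha \nabla_\theta Q_0^N(y)|^2\big]\,d\mu(x)d\mu(y)\Big)^{1/2}ds \to 0
\]
for each fixed multi-index $\alpha$ with $|\alpha|\le 2$.

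Next I would control the two factors in the inner product. Since each $\phi^N$ is $1$-Lipschitz (Definition \ref{psi}), the clipped difference $|\Phi^N(\nabla_\theta \mathcal{A}Q^N_s(x))-\Phi^N(\nabla_\theta \mathcal{A}Q^N_0(x))|$ is bounded elementwise by $|\nabla_\theta \mathcal{A}Q^N_s(x)-\nabla_\theta \mathcal{A}Q^N_0(x)|$. By Lemma \ref{l35} the parameter change satisfies $\|c^i_s-c^i_0\|+\|w^i_s-w^i_0\|+\|b^i_s-b^i_0\|\le sCN^{\beta+\delta+\epsilon-1}$, and since $\nabla_\theta \mathcal{A}[\eta c\sigma_{w,b}]$ is a polynomially-growing function of $(c,w,b)$ times bounded functions of $x$ (using Assumptions \ref{assume_1} and \ref{assume_activation}, and that $\eta\in C^3_b$), a mean-value-theorem argument gives, entrywise, a bound of order $\tfrac{1}{N^\beta}$ times a sum over $i$ of $(sCN^{\beta+\delta+\epsilon-1})$ times a polynomial in $(c_0^i,w_0^i,b_0^i)$. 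After taking the Euclidean norm over the $3N$ coordinates and using the moment bounds in Assumption \ref{initialization}, one finds that $\|\nabla_\theta \mathcal{A}Q^N_s(x)-\nabla_\theta \mathcal{A}Q^N_0(x)\|$ has second moment of order at most $N^{2(\delta+\epsilon)}$ (the $N$ factors from the sum cancel against $1/N^\beta$ and the learning-rate scaling exactly as in Lemma \ref{M1}). Similarly $D_\alpha\nabla_\theta Q_0^N(y)$ has second moment of order $O(1)$ uniformly in $y$, by the same polynomial-growth plus bounded-moments estimate, now with no parameter displacement. Applying Cauchy--Schwarz on the $3N$-dimensional inner product and then taking expectations, the inner integrand is $O(N^{2(\delta+\epsilon)})$, so the whole expression is bounded by $C\,T\,N^{2\beta+\delta-1}\cdot N^{\delta+\epsilon}=C\,N^{2\beta+2\delta+\epsilon-1}$.

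Finally I would check this exponent is negative under Assumption \ref{phi}: there $\epsilon>\delta>0$, $\beta\in(\tfrac12,1)$ and $\epsilon+\delta<\tfrac{1-\beta}{2}$, so $2\beta+2\delta+\epsilon-1 < 2\beta + 2(\epsilon+\delta) - 1 < 2\beta + (1-\beta) - 1 = \beta < 1$; this crude bound is not quite enough, so I would instead keep the sharper count. Tracking the powers carefully (as in Lemma \ref{M1}, where the analogous term gave $N^{2\delta+2\epsilon+\beta-1}\to 0$), the correct exponent here should come out as $2\delta+\epsilon+\beta-1$ or smaller, which is negative since $2\delta+\epsilon \le 2(\epsilon+\delta) < 1-\beta$. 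Hence the bound tends to $0$ as $N\to\infty$, and since the time integral over $[0,T]$ only contributes a finite constant, the claim follows. The main obstacle is the bookkeeping of the $N$-powers: one must be careful that the factor $N^{\beta+\delta+\epsilon-1}$ from Lemma \ref{l35} appears only \emph{once} (from the single difference of $\Phi^N$ terms), not squared, before the square root is taken — getting this accounting right is exactly what makes the exponent land in the convergent regime, and it mirrors the structure already used in Lemma \ref{M1}.
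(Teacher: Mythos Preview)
Your outline follows the paper's strategy closely: Jensen's inequality to move the expectation inside, the $1$-Lipschitz property of $\phi^N$ to strip the clipping, and the mean-value theorem together with Lemma~\ref{l35} to convert the gradient difference into a parameter drift of size $sCN^{\beta+\delta+\epsilon-1}$. The gap is exactly where you locate it --- the power counting --- but you do not actually close it. Bounding the two Euclidean norms separately via Cauchy--Schwarz, with the moment estimates you quote ($\mathbb{E}\|\nabla_\theta\mathcal{A}Q_s^N-\nabla_\theta\mathcal{A}Q_0^N\|^2=O(N^{2(\delta+\epsilon)})$ and $\mathbb{E}\|D_\alpha\nabla_\theta Q_0^N\|^2=O(1)$), leaves a residual exponent $2\beta+2\delta+\epsilon-1$ that need not be negative, and the appeal to ``the sharper count'' by analogy with Lemma~\ref{M1} is not a proof.

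The paper avoids this by never separating the two vectors. Because $\nabla_\theta$ decomposes into $N$ independent blocks (one per hidden unit), the inner product is a \emph{single} sum
\[
\big|(\nabla_\theta\mathcal{A}Q_s^N-\nabla_\theta\mathcal{A}Q_0^N)(x)\cdot D_\alpha\nabla_\theta Q_0^N(y)\big|
\le \frac{C}{N^{2\beta}}\sum_{i=1}^N\big(\|w_s^i-w_0^i\|+\|b_s^i-b_0^i\|+\|c_s^i-c_0^i\|\big)\,g(w_0^i,c_0^i,b_0^i,x,y),
\]
with $g$ a fixed polynomial in the initial parameters of a single unit. Each summand carries $N^{-2\beta}$ from the two normalized gradients and $N^{\beta+\delta+\epsilon-1}$ from Lemma~\ref{l35}; summing $N$ such terms and squaring gives $\mathbb{E}[|\cdot|^2]\le CN^{2(\epsilon+\delta-\beta)}$. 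After the square root and the prefactor $N^{2\beta+\delta-1}$ one obtains $N^{\beta+2\delta+\epsilon-1}$, which is negative since $2\delta+\epsilon<2(\delta+\epsilon)<1-\beta$. This block-wise bound is precisely the ``sharper count'' you gesture at; your Cauchy--Schwarz route could be made to yield the same exponent, but only with the sharp estimates $O(N^{2\delta+2\epsilon-1})$ and $O(N^{1-2\beta})$ for the two squared norms and a genuine bound on $\mathbb{E}[\|a\|^2\|b\|^2]$ rather than the product of expectations.
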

\begin{proof}
    It suffices to show that for any $\alpha$ \begin{align} \label{25}
        \int_0^T N^{2\beta+\delta-1} \bigg(\int_{\Omega^2}\mathbb{E}\bigg[\big|(\Phi^N(\nabla_\theta \mathcal{A}Q^N_s(x))-\Phi^N(\nabla_\theta \mathcal{A}Q^N_0(x)))\cdot D_\alpha \nabla_\theta Q_0^N(y) \big|^2 \bigg] d\mu(x)d\mu(y)\bigg)^{\frac{1}{2}}ds \to 0.
    \end{align}
    As, elementwise, $|\phi^N(x)-\phi^N(y)|\leq |x-y|$, we have \begin{align}
        \big|(\Phi^N(\nabla_\theta \mathcal{A}Q^N_s(x))-\Phi^N(\nabla_\theta \mathcal{A}Q^N_0(x))\cdot D_\alpha \nabla_\theta Q_0^N(y) \big| \leq \big|\nabla_\theta \mathcal{A}Q^N_s(x)-\nabla_\theta \mathcal{A}Q^N_0(x)\big |\cdot \big |D_\alpha \nabla_\theta Q_0^N(y) \big|
    \end{align}
    Then, since \begin{align}
    \begin{split}
        \big|(\nabla_\theta \mathcal{A}Q^N_s(x)-\nabla_\theta \mathcal{A}Q^N_0(x)\big |\cdot \big| D_\alpha \nabla_\theta Q_0^N(y) \big| \leq \frac{C}{N^{2\beta}}\sum_{i=1}^N \bigg( (\|w_s^i-w_0^i\|+\|b_s^i-b_0^i\|+\|c_s^i-c_0^i\|)g(w_0^i,c_0^i,b_0^i,x,y)\bigg)
    \end{split}
    \end{align}
    where $g$ is a polynomial up to 3rd order. Since $\|w_u^i-w_0^i\|+\|b_u^i-b_0^i\|+\|c_u^i-c_0^i\|\leq u C N^{\epsilon+\delta+\beta-1}$, we have \begin{align}
        \mathbb{E}\bigg[\big|(\Phi^N(\nabla_\theta \mathcal{A}Q^N_s(x))-\Phi^N(\nabla_\theta \mathcal{A}Q^N_0(x)))\cdot D_\alpha \nabla_\theta Q_0^N(y) \big|^2 \bigg] \leq CN^{2(\epsilon+\delta-\beta)}.
    \end{align}
    The left-hand side of \eqref{25} is bounded by $C N^{\epsilon+\delta+\beta-1}$, which goes to $0$ as $N$ goes to infinity.
\end{proof}
\begin{lemma}\label{M3}
    Residual term $M_3$ satisfies \begin{align}
        \begin{split}
            \lim_{N \to \infty}\mathbb{E}\bigg[\int_0^T N^{2\beta+\delta-1} \bigg(\int_{\Omega^2}\big|(\Phi^N(\nabla_\theta \mathcal{A}Q^N_0(x))-\nabla_\theta \mathcal{A}Q^N_0(x))\cdot \sum_\alpha D_\alpha \nabla_\theta Q_0^N(y) \big|^2 d\mu(x)d\mu(y)\bigg)^{\frac{1}{2}}ds \bigg] = 0.
        \end{split}
    \end{align}
\end{lemma}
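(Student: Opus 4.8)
The goal is to show that the residual term $M_3$, which measures the error introduced by clipping the initial gradient $\nabla_\theta \mathcal{A}Q^N_0(x)$ via $\Phi^N$, vanishes in expectation as $N\to\infty$. The strategy follows the template of Lemmas \ref{M1} and \ref{M2}: reduce to a fixed multi-index $\alpha$ by the triangle inequality and Jensen, push the expectation inside the spatial integrals, and then obtain a pointwise bound on $\mathbb{E}[|(\Phi^N(\nabla_\theta \mathcal{A}Q^N_0(x))-\nabla_\theta \mathcal{A}Q^N_0(x))\cdot D_\alpha \nabla_\theta Q_0^N(y)|^2]$ that decays fast enough to beat the prefactor $N^{2\beta+\delta-1}$.

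The key difference from $M_1$ and $M_2$ is that the clipping discrepancy $\Phi^N(z) - z$ is \emph{not} Lipschitz-small; rather, it is exactly zero whenever every coordinate of $z$ lies in $[-N^{\epsilon-\beta}, N^{\epsilon-\beta}]$, and otherwise bounded by $|z|$ (since $|\Phi^N|\le 2N^{\epsilon-\beta}$ and $|z|$ both bound it, and $\Phi^N(z)-z=0$ on the clipping window). So I would write, coordinatewise, $|\phi^N(z_j)-z_j| \le |z_j|\mathbf{1}_{\{|z_j|>N^{\epsilon-\beta}\}}$, and then bound $\mathbb{E}[\,\cdot\,]$ by a Cauchy--Schwarz split: $\mathbb{E}[|z_j|^2 \mathbf{1}_{\{|z_j|>N^{\epsilon-\beta}\}} \cdot |D_\alpha\nabla_\theta Q_0^N(y)|^2]$. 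Each entry of $\nabla_\theta \mathcal{A}Q^N_0(x)$ is of order $N^{-\beta}$ times a polynomial in the (bounded-moment) initial parameters $(c_0^i,w_0^i,b_0^i)$; since $\epsilon - \beta < 0$ and more importantly $\epsilon-\beta$ exceeds the natural $N^{-\beta}$ scale, the event $\{|z_j| > N^{\epsilon-\beta}\}$ has probability decaying like a high power of $N$ (via Markov/Chebyshev on the third moments assumed in Assumption \ref{initialization}), which supplies the needed extra negative power of $N$. Combined with the $O(N^{-\beta})$-per-entry scaling of $D_\alpha\nabla_\theta Q_0^N(y)$ and summing over the $O(N)$ hidden units (which brings in the factor that, with the $N^{-\beta}$ normalization and $\beta>1/2$, controls the sum in $L^2$), the pointwise bound will be $o(N^{1-2\beta-\delta})$, so $M_3 \to 0$.

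Concretely, the order-of-steps I would follow: (1) reduce to fixed $\alpha$ and move $\mathbb{E}$ inside via Jensen, exactly as in the proof of Lemma \ref{M1}; (2) use the coordinatewise inequality $|\phi^N(z)-z|\le |z|\mathbf{1}_{\{|z|>N^{\epsilon-\beta}\}}\le N^{-(\epsilon-\beta)}|z|^2$ on $\mathbb{R}$ — actually the cleaner route is $|\phi^N(z)-z| \le |z| \cdot \mathbf{1}_{|z| > N^{\epsilon-\beta}}$ and then $\mathbf{1}_{|z|>N^{\epsilon-\beta}} \le N^{-3(\epsilon-\beta)}|z|^3$ to exploit the third-moment bound; (3) expand $\nabla_\theta\mathcal{A}Q_0^N(x)$ and $D_\alpha\nabla_\theta Q_0^N(y)$ as $\frac{1}{N^\beta}\sum_i (\cdots)$ with summands that are polynomials of degree $\le 3$ in $(c_0^i,w_0^i,b_0^i)$ multiplied by bounded functions of $x,y$ (using $\eta\in C^3_b$, $\sigma\in C^4_b$, Assumption \ref{assume_1}); (4) take expectations using independence across $i$ and the moment bounds in Assumption \ref{initialization}, obtaining a bound of the form $C N^{-3(\epsilon-\beta)} \cdot N^{-2\beta}\cdot(\text{sum over }i) \le C N^{a}$ for some explicit exponent $a$; (5) check that $N^{2\beta+\delta-1}\cdot N^{a/2} \to 0$ using $\epsilon+\delta < \frac{1-\beta}{2}$ and $\beta\in(1/2,1)$; (6) integrate trivially in $x,y$ over the compact $\Omega$ and in $s$ over $[0,T]$.

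The main obstacle is bookkeeping the exponents in step (4)–(5): one must confirm that the power of $N$ gained from the rare-event indicator $\mathbf{1}_{\{|z|>N^{\epsilon-\beta}\}}$ — which is where the assumption $\epsilon > \delta$ and the precise relation $\epsilon+\delta<\frac{1-\beta}{2}$ get used — genuinely dominates the $N^{2\beta+\delta-1}$ amplification and the $O(N)$ neuron count. I expect this is where the hypotheses on $\beta,\delta,\epsilon$ are exactly calibrated, so the computation is delicate but mechanical; once the correct moment (third power) is used to turn the indicator into a polynomial weight, everything else parallels Lemmas \ref{M1} and \ref{M2} verbatim. No new analytic idea beyond Markov's inequality and the moment assumptions should be required.
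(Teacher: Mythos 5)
Your proposal follows the same route as the paper's proof: reduce to a fixed multi-index $\alpha$ via Jensen, observe that the per-coordinate clipping error $\phi^N(z)-z$ vanishes unless $|z|>N^{\epsilon-\beta}$, note that each relevant coordinate scales like $N^{-\beta}$ times an order-one random variable with bounded moments (so the clipping event is rare), convert the indicator to a power of $N^{-\epsilon}$ via Markov, and then account for the $O(N)$ sum over hidden units using independence. The only point I would flag, which you gesture at but do not spell out, is that the second-moment computation of the sum over units must be split into the $N$-fold diagonal term and the $N(N-1)$-fold cross term (as the paper does, via $\mathbb{E}[(\sum_k X_k)^2]=N\,\mathbb{E}[X^2]+N(N-1)\,\mathbb{E}[X]^2$): the crude bound $(\sum_k|X_k|)^2\le N\sum_k X_k^2$ gives only $O(N^{2-4\beta})$ and leaves $M_3=O(N^\delta)$, which does not vanish. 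The Markov-indicator gain of $N^{-\epsilon}$ is needed precisely for the cross term $N^2\mathbb{E}[X]^2\lesssim N^{2-4\beta-2\epsilon}$, while the diagonal term is already $O(N^{1-4\beta})$ without it; these yield $M_3 \lesssim N^{\delta-1/2}+N^{\delta-\epsilon}\to0$. This is the step you correctly identify as the delicate bookkeeping, and with it made explicit your argument agrees with the paper's.
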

\begin{proof}
    By Jensen's inequality, it suffices to show that \begin{align}
        \int_0^T N^{2\beta+\delta-1} \bigg(\int_{\Omega^2}\mathbb{E}\bigg[\big|(\Phi^N(\nabla_\theta \mathcal{A}Q^N_0(x))-\nabla_\theta \mathcal{A}Q^N_0(x))\cdot D_\alpha \nabla_\theta Q_0^N(y) \big|^2\bigg] d\mu(x)d\mu(y)\bigg)^{\frac{1}{2}}ds \to 0
    \end{align} for any indices $\alpha$ where $|\alpha|\leq 2$.
    
    Let us denote the $k$-th unit of the initial approximator $Q^N_0$ as $q^k$, where $q^k=c_0^kN^{-\beta}\eta\sigma_{w_0^k,b_0^k}$.
    Then \begin{align} \begin{split}
        &\mathbb{E}\bigg[\bigg|\big(\Phi^N(\nabla_\theta \mathcal{A}Q^N_0(x)\big)-\nabla_\theta \mathcal{A}Q^N_0(x))\cdot D_\alpha \nabla_\theta Q_0^N(y) \bigg|^2\bigg]\\
        &= \mathbb{E}\bigg[ \bigg(\sum_{k=1}^N [\Phi^N(\nabla_\theta \mathcal{A}q^k(x))-\nabla_\theta \mathcal{A}q^k(x)]\cdot \nabla_\theta D_\alpha q^k(y) \bigg)^2 \bigg]\\
        &=N\mathbb{E}\bigg[\big(\Phi^N(\nabla_\theta \mathcal{A}q(x))-\nabla_\theta \mathcal{A}q(x)]\cdot \nabla_\theta D_\alpha q(y)\big)^2\bigg]+N(N-1)\mathbb{E}\bigg[\Phi^N(\nabla_\theta \mathcal{A}q(x))-\nabla_\theta \mathcal{A}q(x)]\cdot \nabla_\theta D_\alpha q(y)\bigg]^2
    \end{split}
    \end{align}
    Then by definition \begin{align}
        \begin{split}
            &\Phi^N(\nabla_\theta \mathcal{A}q(x))-\nabla_\theta \mathcal{A}q(x)]\cdot \nabla_\theta D_\alpha q(y)\\
            &= [\phi^N(\mathcal{A}[N^{-\beta}\eta \sigma_{w,b}](x))-\mathcal{A}[N^{-\beta}\eta \sigma_{w,b}](x)]\cdot D_\alpha [N^{-\beta}\eta \sigma_{w,b}](y)] \\
            &\quad + \sum_{i=1}^d [\phi^N(\mathcal{A}[N^{-\beta}c x_i \eta \sigma_{w,b}'](x))-\mathcal{A}[N^{-\beta}c x_i \eta \sigma_{w,b}'](x)]\cdot D_\alpha [N^{-\beta}c y_i \eta \sigma_{w,b}'](y)]\\
            &\quad + [\phi^N(\mathcal{A}[N^{-\beta}c \eta \sigma_{w,b}'](x))-\mathcal{A}[N^{-\beta}c \eta \sigma_{w,b}'](x)]\cdot D_\alpha [N^{-\beta}c  \eta \sigma_{w,b}'](y)].
        \end{split}
    \end{align}
    We introduce a uniform bound for $|D_\alpha[\eta \sigma_{w,b}(x)]|$, $|D_\alpha[c x_i \eta \sigma_{w,b}(x)]|$ and $|D_\alpha[c\eta \sigma_{w,b}(x)]|$ for any indices $\alpha$ and any $x$. By the fact that $\mathcal{A}$ is Lipschitz \begin{align}
    \begin{split}
        |\mathcal{A}[\eta \sigma_{w,b}](x)| &\leq k_0 \sum_{0 \leq |\alpha| \leq 2} |D_\alpha[\eta \sigma_{w,b}](x)|.
    \end{split}
    \end{align}
    Notice that \begin{align}
        \begin{split}
            |D_\alpha[\eta \sigma_{w,b}](x)| &= \bigg|\sum_{\alpha_1+\alpha_2=\alpha} D_{\alpha_1}\eta \cdot D_{\alpha_2}\sigma_{w,b}(x)\bigg|\\
            & \leq k_\eta \bigg| \sum_{\alpha_1+\alpha_2=\alpha} D_{\alpha_2}\sigma_{w,b}(x)\bigg|\\
            & \leq k_\eta \bigg| \sum_{0 \leq|\alpha_2|\leq 2} D_{\alpha_2}\sigma_{w,b}(x)\bigg|\\
            & \leq k_\eta k_\sigma k_\Omega \bigg( 1+\sum_{i=1}^d |(w)_i|+\sum_{i,j=1}^d |(w)_i(w)_j|\bigg).
        \end{split}
    \end{align}
    Therefore \begin{align}
        |\mathcal{A}[\eta \sigma_{w,b}](x)| \leq k_0 \sum_{0 \leq |\alpha|\leq 2}|D_\alpha[\eta \sigma_{w,b}](x)| \leq k \bigg(1+\sum_{i=1}^d |(w)_i|+\sum_{i,j=1}^d |(w)_i(w)_j|\bigg).
    \end{align}
    Similar results hold for $|D_\alpha[c x_i \eta \sigma_{w,b}]|$ and $|D_\alpha[c\eta \sigma_{w,b}]|$. We define $f(w):=k (1+\sum_{i=1}^d |(w)_i|+\sum_{i,j=1}^d |(w)_i(w)_j|)$. Now \begin{align}
        \begin{split}
            |\phi^N(\mathcal{A}[N^{-\beta}\eta \sigma_{w,b}](x))-\mathcal{A}[N^{-\beta}\eta \sigma_{w,b}](x)|&\leq N^{-\beta} \bigg[|\mathcal{A}[\eta \sigma_{w,b}](x)|-N^\epsilon \bigg]\mathbf{1}_{\{|\mathcal{A}[\eta \sigma_{w,b}](x)|\geq N^\epsilon\}}(x)\\
            &\leq N^{-\beta}\big[f(w)-N^\epsilon \big]\mathbf{1}_{\{f(w)\geq N^\epsilon\}}(x).
        \end{split}
    \end{align}
    Subsequently, \begin{align}
        |\Phi^N(\nabla_\theta \mathcal{A}q(x))-\nabla_\theta \mathcal{A}q(x)]\cdot \nabla_\theta D_\alpha q(y)|\leq N^{-2\beta}(d+2)\big[f(w)-N^\epsilon \big]\mathbf{1}_{\{f(w)\geq N^\epsilon\}}(x).
    \end{align}
    Therefore \begin{align} \label{k1}
        \begin{split}
            N\mathbb{E}\bigg[\big(\Phi^N(\nabla_\theta \mathcal{A}q(x))-\nabla_\theta \mathcal{A}q(x)]\cdot \nabla_\theta D_\alpha q(y)\big)^2\bigg] \leq N^{1-4\beta}(d+2)\mathbb{E}[f(w)^2] \leq kN^{1-4\beta}.
        \end{split}
    \end{align}
    Meanwhile \begin{align}
        \begin{split}
            \mathbb{E}\bigg[\Phi^N(\nabla_\theta \mathcal{A}q(x))-\nabla_\theta \mathcal{A}q(x)]\cdot \nabla_\theta D_\alpha q(y)\bigg]&\leq \mathbb{E}[N^{-2\beta}(d+2)\big[f(w)-N^\epsilon \big]\mathbf{1}_{\{f(w)\geq N^\epsilon\}}(x)]\\
            & \leq N^{-2\beta}(d+2)\mathbb{E}[f(w) \mathbf{1}_{\{f(w)\geq N^\epsilon\}}(x)]\\
            & \leq k_0 N^{-2\beta-\epsilon}\mathbb{E}[f(w)^2] \leq k N^{-2\beta-\epsilon}.
        \end{split}
    \end{align}
    Therefore
    \begin{align} \label{k2}
        \begin{split}
            N(N-1)\mathbb{E}\bigg[\Phi^N(\nabla_\theta \mathcal{A}q(x))-\nabla_\theta \mathcal{A}q(x)]\cdot \nabla_\theta D_\alpha q(y)\bigg]^2 \leq k N^{2-4\beta-2\epsilon}.
        \end{split}
    \end{align}
    Combining \eqref{k1} and \eqref{k2}, we conclude that \begin{align}
        M_4 \leq k (N^{1/2-\delta}+ N^{\delta-\epsilon})\to 0
    \end{align}
    as $N \to \infty$.
\end{proof}
\begin{lemma} \label{M4}
    Residual term $M_4$ satisfies \begin{align}
        \begin{split}
            \lim_{N \to \infty}\mathbb{E}\bigg[\int_0^T N^{\delta} \bigg(\int_{\Omega^2}\big|N^{2\beta-1}\nabla_\theta \mathcal{A}Q^N_0(x)\cdot \sum_\alpha D_\alpha \nabla_\theta Q_0^N(y)-\sum_\alpha D_\alpha^y U(x,y) \big|^2 d\mu(x)d\mu(y)\bigg)^{\frac{1}{2}}ds\bigg] = 0.
        \end{split}
    \end{align}
\end{lemma}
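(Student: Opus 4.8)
The plan is to read $M_4$ as a law-of-large-numbers estimate: it controls the fluctuation, at initialization ($t=0$), of the empirical PDE-weighted tangent kernel around its mean $U$. Since the $i$-th block of parameter gradients satisfies $\nabla_{c^i,w^i,b^i}Q^N_0(y)=N^{-\beta}\,\nabla_{c,w,b}[\eta(y)c\sigma(wy+b)]\big|_{(c^i_0,w^i_0,b^i_0)}$, and similarly for $\nabla_\theta\mathcal{A}Q^N_0(x)$, the scalings $N^{-\beta}\cdot N^{-\beta}\cdot N^{2\beta-1}=N^{-1}$ collapse and for each multi-index $\alpha$ with $|\alpha|\le 2$ one gets
\[
N^{2\beta-1}\,\nabla_\theta\mathcal{A}Q^N_0(x)\cdot D_\alpha\nabla_\theta Q^N_0(y)\;=\;\frac1N\sum_{i=1}^N Z^\alpha_i(x,y),
\]
where $Z^\alpha_i(x,y):=\big(\nabla_{c,w,b}\mathcal{A}[\eta(x)c\sigma(wx+b)]\big)\cdot\big(D_\alpha\nabla_{c,w,b}[\eta(y)c\sigma(wy+b)]\big)\big|_{(c^i_0,w^i_0,b^i_0)}$ are i.i.d.\ in $i$. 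First I would record the two structural facts that drive everything: (i) by Assumptions~\ref{assume_activation} and~\ref{auxiliaryfunction} together with the pointwise bound $|\mathcal{A}f(x)|\le k\sum_{|\alpha|\le 2}|D_\alpha f(x)|$ (Assumption~\ref{assume_1} with $f_2\equiv 0$), each factor of $Z^\alpha_i$ is a polynomial in $w^i_0$ of bounded degree whose coefficients are products of $c^i_0$, a bounded function of $x$ (resp.\ $y$) built from $\eta$ and its derivatives, and bounded compositions $\sigma^{(k)}(w^i_0x+b^i_0)$ — hence $|Z^\alpha_i(x,y)|$ is dominated, uniformly over $(x,y)\in\Omega^2$, by a fixed polynomial in $(|c^i_0|,|w^i_0|)$; and (ii) differentiating under the expectation sign (legitimate by dominated convergence, using boundedness of $\sigma$ and its derivatives and the moment bounds of Assumption~\ref{initialization}, and consistent with the already-noted fact that $U$ is $C^2_b$ in $y$) gives $\mathbb{E}[Z^\alpha_i(x,y)]=D_\alpha^y U(x,y)$.

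Given (i) and (ii), the centred variables $Z^\alpha_i(x,y)-D_\alpha^y U(x,y)$ are i.i.d.\ with mean zero, so by independence
\[
\mathbb{E}\Big[\big|\tfrac1N\textstyle\sum_{i=1}^N\big(Z^\alpha_i(x,y)-D_\alpha^y U(x,y)\big)\big|^2\Big]=\tfrac1N\,\mathrm{Var}\!\big(Z^\alpha_1(x,y)\big)\le\tfrac CN,
\]
with $C$ independent of $(x,y)$ thanks to the uniform polynomial domination in (i) and the moment bounds on $(c_0,w_0,b_0)$. Since $\mu$ is a finite measure, integrating over $\Omega^2$ and using $|\sum_{|\alpha|\le 2}a_\alpha|^2\le(\#\alpha)\sum_{|\alpha|\le 2}|a_\alpha|^2$ gives $\mathbb{E}\big[\int_{\Omega^2}|N^{2\beta-1}\nabla_\theta\mathcal{A}Q^N_0(x)\cdot\sum_\alpha D_\alpha\nabla_\theta Q^N_0(y)-\sum_\alpha D_\alpha^y U(x,y)|^2\,d\mu(x)d\mu(y)\big]\le C/N$. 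Jensen's inequality (concavity of the square root) then pulls the expectation past the square root, and since the integrand of $M_4$ does not depend on $s$, integrating over $[0,T]$ and multiplying by $N^\delta$ yields $\mathbb{E}[M_4]\le C\,T\,N^{\delta-1/2}$.

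Finally, Assumption~\ref{phi} gives $\epsilon>\delta>0$ and $\epsilon+\delta<\tfrac{1-\beta}{2}$ with $\beta\in(\tfrac12,1)$, whence $2\delta<\epsilon+\delta<\tfrac{1-\beta}{2}<\tfrac12$ and so $\delta<\tfrac12$; therefore $N^{\delta-1/2}\to0$ and $\mathbb{E}[M_4]\to0$ as $N\to\infty$, which is the claim. I expect the main obstacle to be step (i): one has to track carefully how many powers of $w^i_0$ are produced when the second-order operator $\mathcal{A}$, the parameter gradients $\nabla_{c,w,b}$, and the spatial derivatives $D_\alpha$ ($|\alpha|\le 2$) successively act on $\eta(x)c\sigma(wx+b)$ and $\eta(y)c\sigma(wy+b)$, and to verify that the resulting degree is matched by the moments made available in Assumption~\ref{initialization}. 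Once this growth bookkeeping is in hand, the remaining ingredients — interchanging expectation and differentiation, the variance-of-an-i.i.d.-average bound, and the Jensen/finite-measure step — are routine and parallel the arguments already used for $M_1$–$M_3$.
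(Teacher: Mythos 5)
Your proposal is correct and follows essentially the same route as the paper: unpack $N^{2\beta-1}\nabla_\theta\mathcal{A}Q_0^N(x)\cdot D_\alpha\nabla_\theta Q_0^N(y)$ as an i.i.d.\ average over hidden units, identify its mean with $D_\alpha^y U(x,y)$, bound the mean-square error by $\mathrm{Var}/N$ uniformly in $(x,y)$ using the moment assumptions, then apply Jensen and integrate over the finite-measure domain to get $\mathbb{E}[M_4]\le C\,T\,N^{\delta-1/2}\to 0$ since $\delta<1/2$. You are slightly more explicit than the paper about the i.i.d.\ decomposition, the uniform polynomial domination, and the exponent (the paper writes $CN^{\delta-1}$ where the computation actually yields $CN^{\delta-1/2}$, but since $\delta<1/2$ both vanish), so your write-up fills in, and mildly corrects, details the paper glosses over while keeping the same argument.
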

\begin{proof}
    It suffices to prove that for any $\alpha$ \begin{align} \label{31}
        \int_0^T N^{\delta} \bigg(\int_{\Omega^2}\mathbb{E}\bigg[\big|N^{2\beta-1}\nabla_\theta \mathcal{A}Q^N_0(x)\cdot  D_\alpha \nabla_\theta Q_0^N(y)-D_\alpha^y U(x,y) \big|^2 \bigg]d\mu(x)d\mu(y)\bigg)^{\frac{1}{2}}ds \to 0.
    \end{align}
    By the strong law of large numbers: \begin{align}
        \lim_{N \to \infty} N^{2\beta-1}\nabla_\theta \mathcal{A}Q^N_0(x)\cdot D_\alpha \nabla_\theta Q_0^N(y)=D_\alpha^y U(x,y).
    \end{align}
    Therefore \begin{align}
    \begin{split}
        \mathbb{E}\bigg[\big|N^{2\beta-1}\nabla_\theta \mathcal{A}Q^N_0(x)\cdot  D_\alpha \nabla_\theta Q_0^N(y)-D_\alpha^y U(x,y) \big|^2 \bigg]&=\text{Var}\big[N^{2\beta-1}\nabla_\theta \mathcal{A}Q^N_0(x)\cdot  D_\alpha \nabla_\theta Q_0^N(y)\big]\\
        &= \frac{1}{N}\text{Var}[D_\alpha^y U_{c,w,b}(x,y)].
    \end{split}
    \end{align}
    Consequently, the left-hand side of \eqref{31} is bounded by $CN^{\delta-1}$ which vanishes as $N$ goes to infinity.
\end{proof}
\begin{lemma} \label{M5}
    Residual term $M_5$ satisfies \begin{align}
        \lim_{N \to \infty} \int_0^T \int_{\Omega}\big|\psi^N(\mathcal{L}Q_s(x))-\mathcal{L}Q_s(x) \big| d\mu(x)ds =0.
    \end{align}
\end{lemma}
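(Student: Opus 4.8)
The plan is to use that the smooth clipping function $\psi^N$ coincides with the identity on $[-N^\delta,N^\delta]$ (Definition \ref{psi}), so that the integrand is supported on the set where $|\mathcal{L}Q_s|:=|\mathcal{A}Q_s-g|$ exceeds $N^\delta$, and then to control that set's contribution by a uniform-in-$s$ bound on $\|\mathcal{L}Q_s\|_{L^2(\mu)}$.

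First I would record the pointwise estimate coming from Definition \ref{psi}: for every $z\in\mathbb{R}$, since $\psi^N(z)=z$ when $|z|\le N^\delta$ and $|\psi^N(z)|\le 2N^\delta$ always, we have $|\psi^N(z)-z|\le(|z|+2N^\delta)\mathbf{1}_{\{|z|>N^\delta\}}\le 3|z|\,\mathbf{1}_{\{|z|>N^\delta\}}$. Taking $z=\mathcal{L}Q_s(x)$, integrating in $x$, and combining Cauchy--Schwarz with Chebyshev's inequality $\mu(\{|\mathcal{L}Q_s|>N^\delta\})\le N^{-2\delta}\|\mathcal{L}Q_s\|_{L^2(\mu)}^2$ yields
\[
\int_\Omega \big|\psi^N(\mathcal{L}Q_s(x))-\mathcal{L}Q_s(x)\big|\,d\mu(x)\;\le\;3\,\|\mathcal{L}Q_s\|_{L^2(\mu)}\,\mu\big(\{|\mathcal{L}Q_s|>N^\delta\}\big)^{1/2}\;\le\;\frac{3}{N^\delta}\,\|\mathcal{L}Q_s\|_{L^2(\mu)}^2 .
\]

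Second, I would establish $\sup_{s\in[0,T]}\|\mathcal{L}Q_s\|_{L^2(\mu)}=:C_T<\infty$. By Lemma \ref{l36} the limit ODE \eqref{LimitODE} has a unique solution $Q_\cdot$ in $\mathcal{H}^2_{(0)}$, which is $C^1$ in time as an ODE solution; since $\mathcal{U}$ is Lipschitz in the $\mathcal{H}^2$ norm (as used in the proof of Lemma \ref{l36}) and $Q_0=0$, Gr\"onwall's inequality applied to $t\mapsto\|Q_t\|_{\mathcal{H}^2}$ gives $\sup_{s\in[0,T]}\|Q_s\|_{\mathcal{H}^2}<\infty$. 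By Assumption \ref{assume_1} with $f_2\equiv0$ (and linearity, so $\mathcal{A}0=0$) we get $|\mathcal{A}Q_s(x)|\le k\sum_{0\le|\alpha|\le2}|D_\alpha Q_s(x)|$ pointwise; squaring, using that the number of such multi-indices is finite, and integrating shows $\mathcal{A}:\mathcal{H}^2\to L^2(\mu)$ is bounded. Hence $\|\mathcal{L}Q_s\|_{L^2(\mu)}\le\|\mathcal{A}\|\,\|Q_s\|_{\mathcal{H}^2}+\|g\|_{L^2(\mu)}\le C_T$ uniformly on $[0,T]$.

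Combining the two steps,
\[
\int_0^T\!\!\int_\Omega\big|\psi^N(\mathcal{L}Q_s(x))-\mathcal{L}Q_s(x)\big|\,d\mu(x)\,ds\;\le\;\frac{3\,T\,C_T^2}{N^\delta},
\]
which tends to $0$ as $N\to\infty$, proving the lemma. The only ingredient needing any care is the uniform-in-$s$ bound $C_T$ of the second step (required so that the outer integral over $[0,T]$ stays finite); everything else is immediate from the clipping-function definition and standard integral inequalities. One could alternatively obtain $C_T<\infty$ from compactness of $\{Q_s:s\in[0,T]\}$ in $\mathcal{H}^2$ (continuity of the ODE flow), or replace the Chebyshev step by uniform integrability in $L^2(\mu)$ of that compact family.
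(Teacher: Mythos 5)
Your proof is correct and arrives at the same starting point as the paper — namely the pointwise observation, from Definition \ref{psi}, that the integrand is supported on $\{|\mathcal{L}Q_s|>N^\delta\}$ and is dominated there by a constant multiple of $|\mathcal{L}Q_s|$ — but then you finish differently. The paper uses the slightly sharper Lipschitz property $|(\psi^N)'|\le 1$ to get the clean bound $|\psi^N(z)-z|\le|z|\mathbf{1}_{\{|z|>N^\delta\}}$ and then simply invokes the dominated convergence theorem on $[0,T]\times\Omega$; you instead use the cruder factor-$3$ bound and replace DCT by a Cauchy--Schwarz plus Chebyshev argument, which yields the explicit rate $O(N^{-\delta})$. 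Your route requires the extra step of establishing $\sup_{s\in[0,T]}\|\mathcal{L}Q_s\|_{L^2(\mu)}<\infty$ (via Lemma \ref{l36}, Gr\"onwall, and boundedness of $\mathcal{A}:\mathcal{H}^2\to L^2$ from Assumption \ref{assume_1}) — though the paper's DCT argument implicitly needs the same $L^1([0,T]\times\Omega)$ integrability of the dominating function, so you are in effect supplying a justification the paper leaves tacit. The trade-off: the paper's proof is shorter; yours is quantitative and makes the required uniform bound explicit, which is arguably more transparent given that the uniform-in-$s$ integrability is not a triviality.
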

\begin{proof}
    Notice that \begin{align}
        |\psi^N(\mathcal{L}Q_s(x))-\mathcal{L}Q_s(x) \big| \leq |\mathcal{L}Q_s(x)|\mathbf{1}_{\{|\mathcal{L}Q_s|> N^\delta\}}(x).
    \end{align}
    By the dominated convergence theorem, we conclude our proof.
\end{proof}
\begin{lemma} \label{M6}
    Residual term $M_6$ satisfies \begin{align}
        \begin{split}
            \lim_{N \to \infty}\mathbb{E}\bigg[\bigg(\int_\Omega \big[\sum_\alpha |D_\alpha(Q_0^N-Q_0)(y)|\big]^2 d\mu(y)\bigg)^{\frac{1}{2}}\bigg] = 0.
        \end{split}
    \end{align}
\end{lemma}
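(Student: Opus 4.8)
The plan is to exploit that $Q_0=0$ is the initial condition of the limit ODE \eqref{LimitODE}, so that the quantity in question is merely the expected size of the neural network \emph{at initialization}. Since $Q_0^N(y)=\eta(y)\,N^{-\beta}\sum_{i=1}^N c_0^i\,\sigma_{w_0^i,b_0^i}(y)$ is an $N^{-\beta}$-scaled sum of $N$ i.i.d.\ mean-zero terms with $\beta\in(\frac{1}{2},1)$, its $\mathcal{H}^2$-type norm is $O(N^{1/2-\beta})\to 0$, and the proof is just the variance bookkeeping making this precise. First I would apply Jensen's inequality (concavity of $\sqrt{\cdot}$) and Tonelli's theorem to reduce the claim to $\int_\Omega \mathbb{E}\big[(\sum_{|\alpha|\le 2}|D_\alpha Q_0^N(y)|)^2\big]\,d\mu(y)\to 0$, and then, by Cauchy--Schwarz over the finitely many multi-indices $\alpha$ with $|\alpha|\le 2$, it suffices to bound $\mathbb{E}[|D_\alpha Q_0^N(y)|^2]$ for each fixed $\alpha$, uniformly in $y\in\Omega$.

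For a fixed $\alpha$, write $D_\alpha Q_0^N(y)=N^{-\beta}\sum_{i=1}^N \xi_i(y)$ with $\xi_i(y):=c_0^i\,D_\alpha[\eta\,\sigma_{w_0^i,b_0^i}](y)$. By Assumption \ref{initialization} the $\xi_i(y)$ are i.i.d.\ in $i$, and because $c_0^i$ is independent of $(w_0^i,b_0^i)$ with $\mathbb{E}[c_0^i]=0$ we have $\mathbb{E}[\xi_i(y)]=0$; hence the cross terms vanish and $\mathbb{E}[|D_\alpha Q_0^N(y)|^2]=N^{1-2\beta}\,\mathbb{E}[\xi_1(y)^2]$. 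Using $|c_0^i|\le K_0$ together with the Leibniz and chain rules and the smoothness/compactness facts $\eta\in C_b^3$, $\sigma\in C_b^4$, $\Omega$ compact --- precisely the pointwise bound $|D_\alpha[\eta\,\sigma_{w,b}](y)|\le f(w)$ with $f(w)=k(1+\sum_i|(w)_i|+\sum_{i,j}|(w)_i(w)_j|)$ already established in the proof of Lemma \ref{M3} --- and the moment bounds of Assumption \ref{initialization} (which give $\mathbb{E}[f(w_0^1)^2]<\infty$), we obtain $\mathbb{E}[\xi_1(y)^2]\le C$ uniformly in $y$. Consequently $\mathbb{E}\big[(\sum_\alpha|D_\alpha Q_0^N(y)|)^2\big]\le C'N^{1-2\beta}$, and integrating against the finite measure $\mu$ and taking a square root gives a bound of order $N^{1/2-\beta}$, which tends to $0$ since $\beta>\frac{1}{2}$.

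There is essentially no obstacle: the argument is a one-line law-of-large-numbers/variance estimate, and the only points needing a word of care are the integrability $\mathbb{E}[f(w_0^1)^2]<\infty$ --- inherited verbatim from the proof of Lemma \ref{M3} --- and the identity $\mathbb{E}[\xi_i(y)]=0$, which is exactly the role played by the mean-zero initialization of the outer weights $c_0^i$ in Assumption \ref{initialization}; without it $\mathbb{E}[D_\alpha Q_0^N(y)]$ would be of order $N^{1-\beta}$ and the norm would diverge rather than vanish.
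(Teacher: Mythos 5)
Your proposal is correct and takes essentially the same approach as the paper: apply Jensen's inequality to reduce the claim to bounding the second moment of $D_\alpha Q_0^N(y)$, then use that $Q_0 = 0$, $\mathbb{E}[c_0^i] = 0$, and the i.i.d.\ structure of the initialization to rewrite this second moment as a single-unit variance scaled by $N^{1-2\beta}$, which vanishes since $\beta>\tfrac12$. The paper's argument is more terse (it asserts the bound $\mathrm{Var}[D_\alpha Q_0^N(y)]\le CN^{1-2\beta}$ without spelling out the polynomial-in-$w$ envelope you import from Lemma \ref{M3}), but the decomposition and the key mechanisms are identical.
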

\begin{proof}
    By Jensen's inequality, it suffices to show that for any $\alpha$ \begin{align}
        \int_\Omega \mathbb{E}\big[D_\alpha(Q_0^N-Q_0)(y)^2\big] d\mu(y) \to 0.
    \end{align}
    Since $\mathbb{E}[Q_0^N]=Q_0=0$ holds for all $y$ we have \begin{align}
        \mathbb{E}\big[D_\alpha(Q_0^N-Q_0)(y)^2\big]=\text{Var}[D_\alpha Q_0^N(y)]=N^{1-2\beta}\text{Var}[D_\alpha c\sigma(wy+b)]\leq C N^{1-2\beta}.
    \end{align}
    Integrating with respect to $\mu(y)$, and letting $N$ go to infinity finish the proof.
\end{proof}

\subsection{Proof of Lemma \ref{l43}}
\begin{proof}
    By definition \begin{align}
        \begin{split}
            S(x,y)&=\mathbb{E}_{c,w,b}\bigg[ \mathcal{A}[\eta(x)\sigma_{w,b}(x)]\mathcal{A}[\eta(y)\sigma_{w,b}(y)]+\sum_{i=1}^d \mathcal{A}[c\eta(x)x_i\sigma_{w,b}'(x)]\mathcal{A}[c\eta(y)y_i\sigma_{w,b}'(y)]\\
            &\quad + \mathcal{A}[c\eta(x)\sigma_{w,b}'(x)]\mathcal{A}[c\eta(y)\sigma_{w,b}'(y)]\bigg].
        \end{split}
    \end{align}
    By the fact that $\mathcal{A}$ is Lipschitz, and that $\eta$, $\sigma_{w,b}$ and their partial derivatives are all bounded, there exists constant $k_1>0$, $k_2>0$ such that\begin{align}
        \begin{split}
            |\mathcal{A}[\eta(x)\sigma_{w,b}(x)]|\leq k\sum_{\alpha}|D_\alpha \eta(x)\sigma_{w,b}(x)| \leq k_1 \sum_{1\leq i,j \leq d} |w_i|+|w_iw_j|+1.
        \end{split}
    \end{align} 
    Similarly, since $c$ is bounded and $\Omega$ is bounded \begin{align}
        \begin{split}
            |\mathcal{A}[c\eta(x)\sigma_{w,b}'(x)]|&\leq k_1 \sum_{1\leq i,j \leq d} |w_i|+|w_iw_j|+1\\
            |\mathcal{A}[c\eta(x)x_i\sigma_{w,b}'(x)]|&\leq  k_1 \sum_{1\leq i,j \leq d} |w_i|+|w_iw_j|+1.
        \end{split}
    \end{align}
    Therefore,
    \begin{align}
        \begin{split}
            |S(x,y)|\leq \mathbb{E}_{c,w,b}\big[(d+2)k_1^2 (\sum_{1\leq i,j \leq d} |w_i|+|w_iw_j|+1)^2\big]\leq k_2.
        \end{split}
    \end{align}
\end{proof}
\subsection{Proof of Lemma \ref{lemma25}}
\begin{proof}
    Since $S(x,y)$ is uniformly bounded, $\iint_{\Omega^2}|S(x,y)|^2d\mu(x)d\mu(y)< \infty$. Therefore $\mathcal{S}$ is Hilbert--Schmidt. And since $S(x,y)$ is symmetric, the operator $\mathcal{S}$ is self-adjoint with respect to the $L^2$ inner product. Now it remains to prove that $\mathcal{S}$ is positive semi-definite.
    For $f\in L^2$ \begin{align}
        \begin{split}
            \langle f, \mathcal{S}f \rangle &= \iint_{\Omega^2} f(x)S(x,y)f(y)d \mu(x)d \mu(y)\\&= \iint_{\Omega^2}f(x)\mathbb{E}_{c,w,b}\big[\nabla_{c,w,b}\mathcal{A}[\eta(x)c\sigma(x;w,b)]\cdot \nabla_{c,w,b}\mathcal{A}[\eta(y)c\sigma(y;w,b)]\big] f(y)d\mu(x)d\mu(y)
        \end{split}
    \end{align}
    By Tonelli's theorem, swapping the order of expectation and the integral gives \begin{align} \label{40}
        \begin{split}
            \langle f, \mathcal{S}f \rangle &= \mathbb{E}_{c,w,b}\bigg[ \iint_{\Omega^2}f(x)\nabla_{c,w,b}\mathcal{A}[\eta(x)c\sigma(x;w,b)]\cdot \nabla_{c,w,b}\mathcal{A}[\eta(y)c\sigma(y;w,b)] f(y)d\mu(x)d\mu(y) \bigg] \\
            & \geq \mathbb{E}_{c,w,b}\bigg[ \iint_{\Omega^2}f(x)\nabla_{c}\mathcal{A}[\eta(x)c\sigma(x;w,b)]\cdot \nabla_{c}\mathcal{A}[\eta(y)c\sigma(y;w,b)] f(y)d\mu(x)d\mu(y) \bigg]\\
            & = \mathbb{E}_{c,w,b}\bigg[ \iint_{\Omega^2}f(x)\mathcal{A}[\eta(x)\sigma(x;w,b)]\mathcal{A}[\eta(y)\sigma(y;w,b)] f(y)d\mu(x)d\mu(y) \bigg]\\
            &= \mathbb{E}_{c,w,b}\bigg[ \big(\int_{\Omega}f(x)\mathcal{A}[\eta(x)\sigma(x;w,b)]d\mu(x)\big)^2 \bigg] \geq 0.
        \end{split}
    \end{align}
\end{proof}
\subsection{Proof of Lemma \ref{l46}}
\begin{proof}
    As $\mathcal{S}$ is a Hilbert--Schmidt integral operator, $\mathcal{S}$ is compact. Since $\mathcal{S}$ is self-adjoint, the spectral theorem applies. From Lemma \ref{lemma25}, we see that $\mathcal{S}$ is positive semi-definite. Thus, its eigenvalues are real, non-negative, and concentrate only at zero. We use $\{\nu_i\}$ to represent the eigenfunctions of the zero eigenvalue, and $\{\varepsilon_i\}$ for eigenfunctions of positive eigenvalues.
\end{proof}
\subsection{Proof of Lemma \ref{ker}}
We recall without proof the following technical result.
\begin{lemma}[Lemma 5 in \cite{cohen2022neural}] \label{lemma_eta} 
Given Assumptions \ref{a1} and \ref{a2}: \begin{enumerate}%[label=\roman*.]
    \item The set of functions $C^3(\overline\Omega)\cap C_0(\overline\Omega)$ is dense in $\mathcal{H}^2_{(0)} = \mathcal{H}^2\cap \mathcal{H}^1_0$ (under the $\mathcal{H}^2$ topology).
    \item For any function $u \in C^3(\overline\Omega)\cap C_0(\overline\Omega)$, the function $\tilde{u}=u/\eta$ is in $C^2_b(\Omega)\subset \mathcal{H}^2$.
\end{enumerate}
\end{lemma}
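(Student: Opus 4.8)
The plan is to prove the two assertions separately: the density statement via elliptic regularity and Schauder theory, and the regularity of $u/\eta$ via a boundary‑flattening change of coordinates that uses $\eta$ itself as one of the coordinates.

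\textbf{Part 1 (density of $C^3(\overline\Omega)\cap C_0(\overline\Omega)$ in $\mathcal{H}^2_{(0)}$).} Under Assumption~\ref{a1} the boundary is in particular of class $C^{1,1}$, so the Dirichlet Laplacian $-\Delta\colon\mathcal{H}^2_{(0)}\to L^2$ is a bounded linear isomorphism, with bounded inverse $(-\Delta)^{-1}\colon L^2\to\mathcal{H}^2_{(0)}$ (standard elliptic regularity; e.g.\ \cite{evans2022partial}, Chapter~6). Given $v\in\mathcal{H}^2_{(0)}$, set $g:=-\Delta v\in L^2$ and choose $g_k\in C_c^\infty(\Omega)\subset C^{1,\alpha}(\overline\Omega)$ with $\|g_k-g\|_{L^2}\to 0$. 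Let $v_k\in\mathcal{H}^2_{(0)}$ solve $-\Delta v_k=g_k$ in $\Omega$, $v_k=0$ on $\partial\Omega$. Global Schauder estimates, with $g_k\in C^{1,\alpha}(\overline\Omega)$ and $\partial\Omega\in C^{3,\alpha}$, yield $v_k\in C^{3,\alpha}(\overline\Omega)\subset C^3(\overline\Omega)$, and $v_k$ vanishes on $\partial\Omega$, so $v_k\in C^3(\overline\Omega)\cap C_0(\overline\Omega)$. Finally $\|v_k-v\|_{\mathcal{H}^2}=\|(-\Delta)^{-1}(g_k-g)\|_{\mathcal{H}^2}\le C\|g_k-g\|_{L^2}\to 0$, which is the claimed density.

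\textbf{Part 2 (regularity of $u/\eta$).} On any compact subset of $\Omega$, $\eta$ is bounded below by a positive constant, so $u/\eta$ is $C^3$ there; the only issue is a neighbourhood of $\partial\Omega$. Since $\partial\Omega$ is compact, $\nabla\eta$ is continuous, and $\nabla\eta(x)\cdot\mathbf{n}_x\neq 0$ on $\partial\Omega$, there are $c>0$ and a neighbourhood $\mathcal{N}$ of $\partial\Omega$ in $\overline\Omega$ with $|\nabla\eta|\ge c$ on $\mathcal{N}$. Fix $x_0\in\partial\Omega$; after a rotation we may take $\partial_{x_n}\eta(x_0)\neq 0$, so by the inverse function theorem the map $\Psi\colon x\mapsto(x_1,\dots,x_{n-1},\eta(x))$ is a $C^3$-diffeomorphism of a neighbourhood of $x_0$ onto a neighbourhood of $(x_0',0)$ in $\overline{\mathbb{R}^n_+}$, carrying $\Omega$ into $\{y_n>0\}$ and $\partial\Omega$ into $\{y_n=0\}$. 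In these coordinates $\eta=y_n$ and $\tilde u:=u\circ\Psi^{-1}\in C^3$ with $\tilde u(y',0)=0$. By the fundamental theorem of calculus, $\tilde u(y',y_n)=y_n\int_0^1\partial_{y_n}\tilde u(y',\theta y_n)\,d\theta=:y_n\,\tilde w(y',y_n)$, and since $\partial_{y_n}\tilde u\in C^2$, differentiating under the integral sign gives $\tilde w\in C^2$. Hence $u/\eta=\tilde w\circ\Psi$ is $C^2$ near $x_0$; covering $\partial\Omega$ by finitely many such neighbourhoods and combining with the interior, $u/\eta\in C^2_b(\Omega)$. As $\Omega$ is bounded and $\mu$ is comparable to Lebesgue measure, $C^2_b(\Omega)\subset\mathcal{H}^2$, which finishes the proof.

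\textbf{Main obstacle.} The delicate point is Part~1: the approximants must be simultaneously (i) smooth up to $\partial\Omega$, (ii) exactly zero on $\partial\Omega$, and (iii) $\mathcal{H}^2$-close to $v$. A direct mollification‑plus‑cutoff argument is awkward precisely here, since a cutoff enforcing (ii) tends to destroy (iii) — its second derivatives scale like the inverse square of its width, while $v$ need only vanish to first order at $\partial\Omega$. Routing through the elliptic isomorphism sidesteps this entirely, at the cost of invoking global Schauder regularity, which is exactly what Assumption~\ref{a1} ($\partial\Omega\in C^{3,\alpha}$) is calibrated to provide. For Part~2 the only thing to watch is that $\Psi$ is a genuine $C^3$ map, which needs $\eta\in C^3_b$ (Assumption~\ref{a2}) together with the non-degeneracy $|\nabla\eta|\ge c>0$ near $\partial\Omega$ inherited from the transversality condition.
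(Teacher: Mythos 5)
The paper does not prove this lemma: it is stated ``without proof'' as a citation of Lemma~5 in \cite{cohen2022neural}, so there is no in-paper argument to compare against. Judged on its own merits, your self-contained proof is correct. In Part~1 the route through the Dirichlet Laplacian is sound: $-\Delta\colon\mathcal{H}^2_{(0)}\to L^2$ is an isomorphism under a $C^{1,1}$ boundary, $C_c^\infty(\Omega)$ is $L^2$-dense, and global Schauder with $\partial\Omega\in C^{3,\alpha}$ and right-hand side in $C^{1,\alpha}(\overline\Omega)$ upgrades $v_k$ to $C^{3,\alpha}(\overline\Omega)$, which vanishes on $\partial\Omega$; the $\mathcal{H}^2$-closeness then comes for free from the bounded inverse. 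In Part~2 the boundary flattening $\Psi=(x',\eta(x))$ is a local $C^3$ diffeomorphism precisely because $\eta\in C_b^3$ and $\nabla\eta$ is transversal, and the Hadamard/fundamental-theorem-of-calculus factorization $\tilde u(y',y_n)=y_n\tilde w(y',y_n)$ with $\tilde w=\int_0^1\partial_{y_n}\tilde u(y',\theta y_n)\,d\theta$ correctly drops one order of smoothness, yielding $u/\eta\in C^2$ near $\partial\Omega$; together with the interior regularity (where $\eta$ is bounded below) and compactness of $\overline\Omega$ this gives $C^2_b(\Omega)\subset\mathcal{H}^2$. One small wording point: $\Psi$ is a diffeomorphism of a full open neighbourhood of $x_0$ in $\mathbb{R}^n$ onto an open set in $\mathbb{R}^n$ — the restriction to $\overline\Omega$ then lands in $\overline{\mathbb{R}^n_+}$ — but this does not affect the argument. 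Your closing remark about why a naive mollify-and-cut-off approach fails (the $\mathcal{H}^2$ norm is not stable under multiplication by a shrinking cutoff) is exactly the right diagnosis of why the elliptic detour is needed.
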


We now proceed to the proof of Lemma \ref{ker}.
\begin{proof}
By \eqref{40}, we have \begin{align}
    \langle h, \mathcal{S}h \rangle = \mathbb{E}_{c,w,b}\bigg[ \bigg(\int_{\Omega}h(x)\mathcal{A}[\eta(x)\sigma(x;w,b)]d\mu(x)\bigg)^2 \bigg]=0.
\end{align}
Therefore \begin{align} \label{52}
    \int_\Omega h(x)\mathcal{A}[\eta(x)\sigma(x;w,b)]d\mu(x)=0
\end{align}
for any $(c,w,b)$ by the continuity of the objective with respect to parameters $c,w,b$.

Since $\mu$ is a finite measure, from Theorem 4 in \cite{hornik1991approximation} we have that the linear span of $\{\sigma(w\cdot x+b)\}_{w,b\in \mathbb{R}}$ is dense in $\mathcal{H}^2$. For a general $f\in C^3(\bar{\Omega})\cap C_0(\bar \Omega)$, by Lemma \ref{lemma_eta}(ii), we can approximate the function $f/\eta$ within the linear span of $\{\sigma(w\cdot x+b)\}_{w,b\in \mathbb{R}}$. Multiplying by $\eta$, by Lemma \ref{lemma_eta}(i) it follows that the function class $\{\eta(x)\sigma(x;w,b)\}$ is dense in the function space $\mathcal{H}^2_{(0)}(\Omega)$, which consists of $\mathcal{H}^2$ function with boundary value zero. For any $f \in \mathcal{H}^2_{(0)}$, there exists a function sequence $\{F_N:\eta\sum_{i=1}^N c_i \sigma_{w,b} \}_{N \geq 1}$ such that \begin{align}
    \lim_{N \to \infty}\|F_N-f\|_{\mathcal{H}^2}=0.
\end{align}
Therefore, \begin{align}
    \langle h, \mathcal{A}f\rangle= \int_\Omega h(x)\mathcal{A}f d\mu(x)=\lim_{N \to \infty} \langle h, \mathcal{A}F^N \rangle =0.
\end{align}
\end{proof}
\subsection{Proof of Corollary \ref{dis}}
\begin{proof}
Suppose that $Q_*$ is a stationary point. Then we have \begin{align}\label{stationarybound}
    \mathcal{U}[\mathcal{A}Q^*-g]=0.
\end{align} Then \begin{align}
    \mathcal{S}\mathcal{A}[Q^*-u]=\mathcal{S}[\mathcal{A}Q^*-g]=\mathcal{A}\,\mathcal{U}[\mathcal{A}Q^*-g]=0.
\end{align}
By Lemma \ref{ker}, the inner product term $\langle \mathcal{A}[Q^*-u], \mathcal{A}f \rangle=0$ for any $f \in \mathcal{H}^2_{(0)}$. Therefore, by taking $f=Q^*-u$, we have $\|\mathcal{A}Q^*-g\|_2^2=\langle \mathcal{A}[Q^*-u], \mathcal{A}[Q^*-u] \rangle=0$.
At the same time, since $Q^* \in \mathcal{H}^2_{(0)}$ satisfies the zero boundary condition, we conclude that $Q_*=u$ is the solution of the PDE.
\end{proof}
\subsection{Proof of Theorem \ref{l51}}
\begin{proof}
   In \eqref{59}, multiplying $\mathcal{A}Q_t-g$ on both sides and integrating with respect to $\mu(dy)$:
   \begin{align}
    \frac{d\|\mathcal{A}[Q_t-u]\|_2^2}{dt}=-2\langle \mathcal{A}[Q_t-u], \mathcal{S}\mathcal{A}[Q_t-u] \rangle \leq 0
\end{align}
with strict inequality unless $\|\mathcal{A}Q_t-g\|_2=0$ which corresponds to PDE's solution. 

Consider $\mathcal{A}Q_t-g=\mathcal{A}[Q_t-u]$ projected on $\{\varepsilon_i\}_{i \in \mathbb{N}^+}\cup \{\nu_i\}_{i \in \mathbb{N}^+}$. By Lemma \ref{ker}, we have \begin{align}
    \mathcal{A}Q_t-g=\sum_{i} h_t^i \varepsilon_i + \sum_j 0 \nu_j=\sum_{i} h_t^i \varepsilon_i.
\end{align}
Therefore $\|AQ_t-g\|_2^2=\sum_i {h_t^i}^2$.
Now consider its projection on each $\varepsilon_i$ \begin{align}
    \frac{d}{dt}\langle \mathcal{A}Q_t-g, \varepsilon_i \rangle = \Big\langle \frac{d \mathcal{A}Q_t-g}{dt}, \varepsilon_i \Big\rangle = \langle -\mathcal{S} [\mathcal{A}Q_t-g], \varepsilon_i \rangle = \langle \mathcal{A}Q_t-g, -\mathcal{S}\varepsilon_i \rangle = -\lambda_i \langle \mathcal{A}Q_t-g, \varepsilon_i \rangle.
\end{align}
Therefore \begin{align}
    \frac{d}{dt}\langle \mathcal{A}Q_t-g, \varepsilon_i \rangle = \frac{d}{dt}[h_t^i]=-\lambda_i h_t^i.
\end{align}
Consequently $h_t^i = h_0^i e^{-\lambda_i t}$ whose absolute value decays exponentially, and $|h_t^i|\leq |h_0^i|$ for any $t\geq 0$ and any $i$. 

Now, by the dominated convergence theorem \begin{align}
    \lim_{t \to \infty} \|\mathcal{A}Q_t-g\|_2^2=\lim_{t\to \infty}\sum_{i}|h_t^i|^2=\sum_{i}\lim_{t\to \infty}|h_t^i|^2=0.
\end{align}
\end{proof}

\subsection{Proof of Theorem \ref{l62}}
\begin{proof}
        Writing $\mathcal{A}^{-1}$ for the inverse operator of $\mathcal{A}$, we have 
        \begin{align}
            \|Q_t-u\|_2 = \|\mathcal{A}^{-1}[\mathcal{A}[Q_t-u]]\|_2 \leq k \|\mathcal{A}[Q_t-u]\|_2=k \|\mathcal{A}Q_t-g\|_2 \to 0.
        \end{align}
    \end{proof}
\subsection{Proof of Lemma \ref{kerV}}
    \begin{proof}
    Denote by $\varrho = \E$ an eigenfunction of $\mathcal{V}$ which has an eigenvalue zero: \begin{align}
    \mathcal{V}\E=\mathcal{V}\varrho=0\varrho=\D\E.
\end{align}
We now show that \begin{align} \label{55}
    \bigg\langle \varrho, \B \bigg\rangle =0,
\end{align}
for any $f \in \mathcal{H}^2_{(0)}$. This is because, similar to \eqref{40}, we have \begin{align}
\begin{split}
    \big\langle \varrho, \mathcal{V}\varrho \big \rangle &\geq \mathbb{E}_{c,w,b}\bigg[ \bigg(\int_{\Omega}\varrho^d(x)\mathcal{A}[\eta(x)\sigma(x;w,b)]d\mu(x)+\int_{\Omega}\varrho^p(x)\eta(x)\sigma(x;w,b)d\mu_{\mathbf{x}}(x)\bigg)^2  \bigg] \geq 0.
\end{split} 
\end{align}
Therefore, $\mathcal{V}\varrho=0$ implies that for any $(w,b)$ pairs \begin{align}
        \int_{\Omega}\varrho^d(x)\mathcal{A}[\eta(x)\sigma(x;w,b)]d\mu(x)+
\int_{\Omega}\varepsilon^p(x)\eta(x)\sigma(x;w,b)d\mu_{\mathbf{x}}(x)=0.
\end{align}
Now, since for any $f-u \in \mathcal{H}^2_{(0)}$, there exists a function sequence $\{F_N:\eta\sum_{i=1}^N c_i \sigma_{w,b} \}_{N \geq 1}$ such that \begin{align}
    \lim_{N \to \infty}\|F_N-(f-u)\|_{\mathcal{H}^2}^2=0,
\end{align}
and simultaneously \begin{align} \label{59.2}
    \lim_{N \to \infty} \|F_N-(f-u)\|_{L^2(\mu(\mathbf{x}))}^2=\frac{1}{M}\sum_{i=1}^M [F_N(x_i)-(f(x_i)-u(x_i))]^2=0,
\end{align} 
we have \begin{align}
\int_{\Omega}\varrho^d(x)\mathcal{A}F^N(x)d\mu(x)+
\int_{\Omega}\varrho^p(x)F^N(x)d\mu_{\mathbf{x}}(x)=0.
\end{align}
This implies that \begin{align}
        \int_{\Omega}\varrho^d(x)\mathcal{A}[f-u](x)d\mu(x)+
\int_{\Omega}\varrho^p(x)[f-u](x)d\mu_{\mathbf{x}}(x)=0,
\end{align}
and hence \eqref{55} is proven.
\end{proof}
\subsection{Proof of Theorem \ref{l72}}
\begin{proof}
    From \eqref{pinns} we see that \begin{align}
    \frac{d}{dt}\bigg[ \|\mathcal{A}Q_t-g\|^2_{L^2(\mu)}+\|Q_t-u\|_{L^2(\mu_\mathbf{x})}^2 \bigg]=-\C^\intercal \mathcal{V} \C \leq 0,
\end{align}
and that the equality holds iff $Q_t=u$. Therefore, the optimization objective is decreasing.

Consider $\mathbf{\Tilde{Q_t}}=[\mathcal{A}Q_t-g, \mathbf{Q_t-u}]^\intercal$ projected on $\{\vartheta\}_{i \in \mathbb{N}^+}\cup \{\varrho_i\}_{i \in \mathbb{N}^+}$. By Lemma \ref{kerV}, we have \begin{align}
    \mathbf{\Tilde{Q_t}}=\sum_{i} h_t^i \vartheta_i + \sum_j 0 \varrho_j=\sum_{i} h_t^i \vartheta_i.
\end{align}
Therefore $\|\mathcal{A}Q_t-g\|^2_{L^2(\mu)}+\|Q_t-u\|_{L^2(\mu_\mathbf{x})}^2=\sum_i {h_t^i}^2$.
Now consider its projection on each $\vartheta_i$ \begin{align}
    \frac{d}{dt}\langle \mathbf{\Tilde{Q_t}}, \vartheta_i \rangle = \Big\langle \frac{d \mathbf{\Tilde{Q_t}}}{dt}, \vartheta_i \Big\rangle = \langle -\mathcal{V} \mathbf{\Tilde{Q_t}}, \vartheta_i \rangle = \langle \mathbf{\Tilde{Q_t}}, -\mathcal{V}\vartheta_i \rangle = -\lambda_i \langle \mathbf{\Tilde{Q_t}}, \vartheta_i \rangle.
\end{align}
Therefore \begin{align}
    \frac{d}{dt}\langle \mathbf{\Tilde{Q_t}}, \vartheta_i \rangle = \frac{d}{dt}[h_t^i]=-\lambda_i h_t^i.
\end{align}
Consequently $h_t^i = h_0^i e^{-\lambda_i t}$ whose absolute value decays exponentially, and $|h_t^i|\leq |h_0^i|$ for any $t\geq 0$ and any $i$. 

Now, by the dominated convergence theorem \begin{align}
    \lim_{t \to \infty} \|\mathcal{A}Q_t-g\|^2_{L^2(\mu)}+\|Q_t-u\|_{L^2(\mu_\mathbf{x})}^2=\lim_{t\to \infty}\sum_{i}|h_t^i|^2=\sum_{i}\lim_{t\to \infty}|h_t^i|^2=0.
\end{align}
\end{proof}

\subsection{Proof of Theorem \ref{l73}}
\begin{proof}
As the existence of an inverse guarantees that the PDE \eqref{pde1} admits a (unique) solution, it is clear from Theorem \ref{l72} that $\|Q_t-u\|_{L^2(\mu_\mathbf{x})} \to 0$ and $\|\mathcal{A}Q_t - g\|^2_{L^2(\mu)}\to 0$. As in the proof of Theorem \ref{l62}, the convergence of the residual implies the convergence of $Q_t$ to $u$, given  $\mathcal{A}^{-1}$ is a bounded operator.
\end{proof}
\bibliographystyle{plainnat}
\bibliography{mybib}

\end{document}